\pgfplotsset{compat=1.10}
\pgfplotsset{grid style={dotted, gray!80!white}}
\numberwithin{equation}{section}
\newtheorem{theorem}{Theorem}[section]
\newtheorem{lemma}[theorem]{Lemma}
\newtheorem{proposition}[theorem]{Proposition}
\theoremstyle{remark}
\newtheorem{remark}[theorem]{Remark}
\theoremstyle{definition}
\newtheorem{definition}[theorem]{Definition}
\newcommand{\dist}{\mathrm{dist}}
\newcommand{\supp}{\mathrm{supp\,}}
\newcommand{\N}{\mathbb{N}}
\newcommand{\R}{\mathbb{R}}
\newcommand{\Z}{\mathbb{Z}}
\newcommand{\T}{\mathbb{T}}
\renewcommand{\hat}{\,\widehat}
\newcommand\reallywidehat[1]{%
\savestack{\tmpbox}{\stretchto{%
  \scaleto{%
    \scalerel*[\widthof{\ensuremath{#1}}]{\kern-.6pt\bigwedge\kern-.6pt}%
    {\rule[-\textheight/2]{1ex}{\textheight}}%WIDTH-LIMITED BIG WEDGE
  }{\textheight}% 
}{0.5ex}}%
\stackon[1pt]{#1}{\tmpbox}%
}
\title[Convergence over fractals for the periodic Schr\"odinger equation]{Convergence over fractals for the periodic Schr\"odinger equation}
\author{Daniel Eceizabarrena}
\address{Department of Mathematics and Statistics, University of Massachusetts Amherst, Amherst, MA 01003, USA}
\email{eceizabarrena@math.umass.edu}
\author{Renato Luc\`a}
\address{BCAM - Basque Center for Applied Mathematics, 48009 Bilbao, Spain and Ikerbasque, Basque Foundation
for Science, 48011 Bilbao, Spain.}
\email{rluca@bcamath.org}
\begin{document}

\begin{abstract}
We consider a fractal refinement of Carleson's problem for pointwise convergence of solutions to the periodic Schr\"odinger equation to their initial datum. For $\alpha \in (0,d]$ and 
\[ s < \frac{d}{2(d+1)} (d + 1 - \alpha), \]
we find a function in $H^s(\mathbb{T}^d)$ whose corresponding solution diverges in the limit $t \to 0$ on a set with strictly positive
$\alpha$-Hausdorff measure. We conjecture this regularity threshold to be optimal. We also prove that 
\[ s > \frac{d}{2(d+2)}\left(  d+2-\alpha \right) \]
 is sufficient for the solution corresponding to every datum in $H^s(\mathbb T^d)$ to converge to such datum $\alpha$-almost everywhere. 
\end{abstract}

\maketitle

\section{Introduction}

We consider the problem of determining when  the linear Schr\"odinger flow $e^{it\Delta} f(x)$ converges pointwise almost everywhere to the initial datum $f$ for every $f \in H^{s}(\Omega^d)$.
%pointwise convergence of the linear Schr\"odinger flow to the initial datum  $e^{it\Delta} f(x) \to f(x)$ as $t \to 0$, when $f \in H^{s}(\Omega^d)$.
%
%
In \cite{Carleson1980}, Carleson proved for $\Omega = \R$ that this property holds for all $f \in H^{1/4}(\R)$, and  Dahlberg and Kenig \cite{DahlbergKenig1982} built counterexamples $f \in H^s(\mathbb R^d)$ for every $d \geq 1$ and every $s <1/4$ for which convergence fails, showing that Carleson's one-dimensional result was sharp. The considerably harder higher dimensional problem has been studied by many authors 
\cite{Cowling1983, Carbery1985, Sjolin1987, Vega1988, Bourgain1992, MoyuaVargasVega1999, TaoVargas2000_1, TaoVargas2000_2, Tao2003, Lee2006, Bourgain2013, LucaRogers2019, DemeterGuo2016, LucaRogers2017, DuGuthLiZhang2018} and recently solved up to endpoints in the  contributions of Bourgain~\cite{Bourgain2016}, who proved the necessity of  
$s \geq \frac{d}{2(d+1)}$ (see also \cite{LucaRogers2019_2} for an alternative proof and \cite{Pierce2020} for a nice detailed version of Bourgain's argument) and of Du, Guth and Li~\cite{DuGuthLi2017} and of Du and Zhang \cite{DuZhang2019}, who proved 
the sufficiency of $s > \frac{d}{2(d+1)}$ in dimensions~$d = 2$ and~$d \geq 3$ respectively.

In the periodic setting, namely when $\Omega = \T = \R / 2 \pi \Z$ is the torus, much less is known. For $d=1$, Moyua and Vega \cite{MoyuaVega2008} observed using Strichartz estimates \cite{Bourgain1993,BourgainDemeter2015} that 
$s > 1/3$ is sufficient for almost everywhere pointwise convergence and that $s \geq \frac14$ is necessary. In fact all the counterexamples on $\R^d$ can be resettled on $\T^d$
to show the necessity of $s \geq \frac{d}{2(d+1)}$; see \cite{CompaanLucaStaffilani2020}. In higher dimensions, again Strichartz estimates imply the sufficiency 
of $s > \frac{d}{d+2}$ as shown in \cite{WangZhang2019,CompaanLucaStaffilani2020}.    
To the knowledge of the authors, in the periodic setting the problem is open when $s\in \left[ \frac{d}{2(d+1)},  \frac{d}{d+2} \right]$. 
The reason this problem is harder, at least superficially, is that in the periodic setting there are more resonances to be controlled. The same happens
for the Strichartz estimates, which are considerably harder to prove on $\T^d$ than on $\R^d$.

Given $\alpha \in [0,d]$, a natural refinement of the  problem is to identify the minimal regularity $s$ such that one has $\lim_{t \to 0} e^{it \Delta} f(x) = f(x)$  $\alpha$-almost everywhere, that is to say, for all $x$ except maybe on a set of zero $\alpha$-Hausdorff measure, for all $f \in H^{s}(\Omega^d)$.
% A weaker refined statement would consist in proving pointwise convergence to the initial data for all all $x$ except maybe on a set of dimension at most $\alpha$. 
Of course, the first thing that one needs to care about is that the solution should be well defined $\alpha$-almost everywhere. We will show that this happens when $s > (d-\alpha)/2$. This is a natural threshold, since if $s < (d-\alpha)/2$ there exist functions in $H^s(\mathbb R^d)$ that are not well defined on sets of 
dimension $\alpha$; see~\cite{Zubrinic2002}. 
%We explain how to do this in the next section. 

This refined problem was initiated for $\Omega = \R$ in \cite{SjogrenSjolin1989}, and solved for $\alpha \in [0,d/2]$ in \cite{BarceloBennettCarberyRogers2011}, where the condition for the well posedness of the problem $s > (d-\alpha)/2$ was surprisingly proved to be also sufficient for $\alpha$-almost everywhere convergence of the solution to the initial data. 
%(the necessity is somehow trivial since the initial data may already be singular on sets of Hausdorff dimension $\alpha$ if $s\le(d-\alpha)/2$; see~\cite{Zubrinic2002}). 
When $\alpha \in (d/2, d]$, the best result so far was shown in \cite{DuZhang2019}, where $\alpha$-almost everywhere pointwise convergence has been proved to hold
% modulo exceptional sets with dimension at most $\alpha$
for all initial data $f \in H^{s}(\R^d)$ as long as
$$
s > \frac{d}{2(d+1)} (d + 1 - \alpha)  .
$$
We prove here that, excluding the endpoint, this is a necessary condition in the periodic case $\Omega = \T$.
%, so that the above condition would be sharp, should it hold in this setting too. 
%($\Omega = \T$) this condition would be in fact sharp.

\begin{theorem}\label{MainThm}
Let $d\ge 1$, $0 <  \alpha \leq d$ and $s \geq 0$ such that
\begin{equation}\label{INTVAL}
s < \frac{d}{2(d+1)} (d + 1 - \alpha).
\end{equation}
Then, 
there exist $f \in H^{s}(\mathbb{T}^{d})$ and $\Gamma \subset \mathbb{T}^{d}$ such that $\operatorname{dim}_{\mathcal H} \Gamma = \alpha$ and 
\begin{equation}\label{DivergentSolution}
\limsup_{t \to 0} | e^{it \Delta} f(x) | = \infty \qquad \text{ for all } x \in \Gamma.
\end{equation}
Moreover, when $\alpha = d$, $\mathcal H^d(\Gamma) >0$.
\end{theorem}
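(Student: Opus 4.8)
Here is the plan.

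\medskip

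The plan is to exhibit an explicit counterexample. The Euclidean results rest on a focusing wave packet (Dahlberg--Kenig, Bourgain, and the fractal refinements of Barcel\'o--Bennett--Carbery--Rogers and of Luc\`a--Rogers); its periodic analogue is a \emph{Gauss sum example}. As the building block I would take the tensorised Dirichlet kernel
\[
f_N(x) \;=\; \prod_{i=1}^{d}\ \sum_{n=1}^{N} e^{i n x_i},\qquad x\in\mathbb{T}^d ,
\]
for which $\|f_N\|_{H^s(\mathbb{T}^d)}\asymp N^{s+d/2}$, $\|f_N\|_{\infty}=N^d$, and $e^{it\Delta}f_N$ factorises over coordinates. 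Evaluating at $t=2\pi/q$ with $q$ odd and splitting each one-dimensional sum $\sum_{n\le N}e^{inx_i}e^{-2\pi i n^2/q}$ according to the residue of $n$ modulo $q$ turns it into a Dirichlet kernel of length $\asymp N/q$ in the variable $qx_i$ times a complete quadratic Gauss sum of modulus $\sqrt q$; oddness of $q$ keeps the Gauss sum non-degenerate, and the continuous linear twist from $x_i$ is harmless because $q\,\dist(x_i,\tfrac{2\pi}{q}\mathbb{Z})\lesssim q/N\to 0$, the incomplete Gauss sums obeying the classical $O(\sqrt q)$ bound. This yields the two estimates I need: the lower bound $|e^{i(2\pi/q)\Delta}f_N(x)|\gtrsim_{C_0}N^dq^{-d/2}$ whenever \emph{every} coordinate of $x$ satisfies $\tfrac{1}{C_0N}\le\dist(x_i,\tfrac{2\pi}{q}\mathbb{Z})\le\tfrac1N$ (a fixed large $C_0$: staying moderately but not too close to the lattice $\tfrac{2\pi}{q}\mathbb{Z}^d$ is the device that decouples the scales later), and the everywhere-valid upper bound $|e^{i(2\pi/q)\Delta}f_N(x)|\lesssim q^{d/2}\prod_{i=1}^{d}\min\!\bigl(N/q,\ \|qx_i/2\pi\|^{-1}\bigr)$.

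Next I would superpose these blocks along an extremely fast sequence of denominators. With $\alpha,s$ as in the statement put $\eta:=\tfrac{d+1}{\alpha}\bigl(\tfrac d2-s\bigr)-\tfrac d2$, which is $>0$ exactly by \eqref{INTVAL}; fix $Q_k\to\infty$ with $Q_{k+1}\ge 2^{Q_k}$, set $N_k:=\lceil Q_k^{(d+1)/\alpha}\rceil$, and let
\[
f \;:=\; \sum_{k} c_k\,\frac{f_{N_k}}{\|f_{N_k}\|_{H^s(\mathbb{T}^d)}},\qquad \sum_k c_k<\infty,
\]
with $c_k\downarrow 0$ slowly enough that $c_kQ_k^{\eta}\to\infty$ (possible since $Q_k$ may be chosen as large as we please), so that $f\in H^s(\mathbb{T}^d)$ by the triangle inequality. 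For odd $q\in[Q_k,2Q_k]$ let $E_k(q)$ be the set of $x$ all of whose coordinates lie at distance in $[\tfrac{1}{C_0N_k},\tfrac1{N_k}]$ from $\tfrac{2\pi}{q}\mathbb{Z}$, put $E_k:=\bigcup_q E_k(q)$ and $\Gamma:=\limsup_k E_k$. For $x\in E_k(q)$ the time $2\pi/q\le 2\pi/Q_k$ is small, and at this time: the block of index $k$ contributes $\gtrsim c_kN_k^{d/2-s}q^{-d/2}\asymp c_kQ_k^{\eta}$ by the lower bound above; the blocks $j<k$ contribute $\lesssim\sum_{j<k}c_jN_j^{d/2-s}$ (since $2\pi/q\ll N_j^{-2}$, so $e^{i(2\pi/q)\Delta}f_{N_j}\approx f_{N_j}$ and one uses $\|f_{N_j}\|_\infty=N_j^d$); and the blocks $j>k$ contribute $\lesssim N_k^dQ_k^{-d/2}\sum_{j>k}c_jN_j^{-s-d/2}$, the annular condition forcing $\|qx_i/2\pi\|^{-1}\lesssim C_0N_k/q$ in the upper bound above. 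The growth of $Q_k$ makes both error terms $o(c_kQ_k^{\eta})$, hence $|e^{i(2\pi/q)\Delta}f(x)|\gtrsim c_kQ_k^{\eta}$ on $E_k$; since $c_kQ_k^{\eta}\to\infty$, every $x\in\Gamma$ satisfies \eqref{DivergentSolution}.

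It remains to determine the size of $\Gamma$. Each $E_k$ lies in the $N_k^{-1}$-neighbourhood of the $\asymp Q_k^{d+1}$ points $\tfrac{2\pi}{q}p$ with $q\asymp Q_k$ and $p\in\mathbb{Z}^d$, so $\sum_k Q_k^{d+1}N_k^{-\beta}=\sum_k Q_k^{(d+1)(1-\beta/\alpha)}<\infty$ for every $\beta>\alpha$, whence $\mathcal H^{\beta}(\Gamma)=0$ and $\dim_{\mathcal H}\Gamma\le\alpha$. For the matching lower bound I split into two regimes. When $\alpha<d$ the radius $N_k^{-1}=Q_k^{-(d+1)/\alpha}$ lies far below the spacing $Q_k^{-(d+1)/d}$ of those points, and the $\limsup$ over $k$ of their $Q_k^{-(d+1)/d}$-neighbourhoods has full Lebesgue measure (Khintchine's divergence theorem, $\sum_{q\asymp Q_k}q^{-1}\asymp 1$, together with a zero--one law of Cassels--Gallagher type); the Mass Transference Principle of Beresnevich and Velani then forces the $\limsup$ of the $N_k^{-1}$-neighbourhoods to have infinite $\mathcal H^{\alpha}$-measure, and since each $E_k(q)$ still contains a ball of radius $\asymp N_k^{-1}$ (the annular constraint costing only a fixed fraction of the box), so does $\Gamma$, giving $\dim_{\mathcal H}\Gamma=\alpha$. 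When $\alpha=d$ the $\asymp Q_k^{d+1}$ points are instead $\asymp N_k^{-1}$-separated and their $N_k^{-1}$-neighbourhoods essentially tile $\mathbb{T}^d$, so (the annuli again removing only a fixed proportion) $|E_k|\ge c_0>0$ uniformly in $k$; then $|\Gamma|=\lim_m|\bigcup_{k\ge m}E_k|\ge c_0>0$, so $\mathcal H^d(\Gamma)>0$ and in particular $\dim_{\mathcal H}\Gamma=d$.

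The main obstacle is the coupling of scales. Unlike the Euclidean wave packet, the periodic evolution of a single block is comparably large not on one small ball but on the \emph{whole} family of $N_k^{-1}$-neighbourhoods of rationals with denominator $\asymp Q_k$, so a careless lacunary sum would have the blocks cancelling or reinforcing at exactly the times one wants to exploit. The annular cut in the definition of $E_k(q)$ must therefore be calibrated to do three things simultaneously: keep the $k$-th block of size $\asymp Q_k^{\eta}$, damp every higher block through the factor $\prod_i\min(N_j/q,\|qx_i/2\pi\|^{-1})$, and leave the Hausdorff dimension (respectively the Lebesgue measure) of $\limsup_k E_k$ untouched. Establishing the two building-block bounds with enough uniformity to secure the first two, and verifying the covering and full-measure inputs for the size of $\Gamma$ (in particular the hypothesis of the Mass Transference Principle), is where the real work lies.
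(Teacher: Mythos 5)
Your analytic core is essentially the paper's: Dirichlet-kernel blocks, evaluation at times $2\pi/q$, a complete-Gauss-sum lower bound of size $\sqrt q$ per coordinate on annular neighbourhoods of the rationals $2\pi p/q$ with $q\asymp N^{\alpha/(d+1)}$ (the paper takes $q\equiv 0\ (\mathrm{mod}\ 4)$, $p_i$ even; your choice of odd $q$ removes the parity bookkeeping), damping of the higher blocks through the geometric factor $\min(N_j/q,\|qx_i/2\pi\|^{-1})$ forced by the \emph{lower} bound in the annulus, and $\Gamma$ a limsup of these annular sets whose dimension is $\alpha$ by Jarn\'ik--Besicovitch considerations, with divergence along $t_k=2\pi/q\to 0$. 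Your super-lacunary scales $Q_{k+1}\ge 2^{Q_k}$ with summable coefficients genuinely simplify the paper's estimates for the low blocks: the trivial bound $\|f_{N_j}\|_\infty$ suffices, whereas the paper's fixed geometric scales $\lambda^j$ force the finer analysis (van der Corput first-derivative test, the restriction $p_i\in[q/4,q/2]$). Where you genuinely diverge is the lower bound on $\dim_{\mathcal H}\Gamma$: you invoke a Khintchine-type full-measure statement plus the Mass Transference Principle, while the paper runs a self-contained Cantor construction (a counting lemma of Durand adapted to the parity and annular constraints) and, for $\alpha=d$, a direct cube count. Your route is viable, but note that the full-measure input is not off-the-shelf: the approximating denominators are confined to sparse blocks $[Q_k,2Q_k]$, so $\psi$ is non-monotone (for $d=1$ one needs a Duffin--Schaeffer/zero-one-law argument, for $d\ge2$ Gallagher), and the restriction to $q\asymp Q_k$ rather than $q\le Q_k$ requires discarding points approximable by small denominators --- which is exactly the content of the paper's counting lemma. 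So the ``real work'' you defer is comparable to what the paper actually does.

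Two concrete soft spots. First, in your $\alpha=d$ argument the assertion that the $\asymp Q_k^{d+1}$ rationals $2\pi p/q$, $q\in[Q_k,2Q_k]$, are $\asymp N_k^{-1}$-separated and their $N_k^{-1}$-neighbourhoods ``essentially tile'' $\mathbb T^d$ is false for $d\ge2$ (and only borderline for $d=1$): distinct fractions with distinct denominators can be as close as $\asymp Q_k^{-2}\ll Q_k^{-(d+1)/d}=N_k^{-1}$. The conclusion $|E_k|\ge c_0>0$ is still true, but it needs an overlap (second-moment) estimate or the small-denominator-removal argument, not separation; the same caveat applies to the ubiquity/full-measure input for the MTP when $\alpha<d$. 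Second, you never address what $e^{it\Delta}f(x)$ \emph{means} pointwise on a Lebesgue-null set for $f\in H^s$ with $s<d/2$: the paper defines it as the pointwise limit of the square partial sums $S_N(t)f$, proves in its Appendix that this limit exists $\alpha$-a.e.\ when $s>(d-\alpha)/2$ (which may be assumed, since $(d-\alpha)/2<\tfrac{d}{2(d+1)}(d+1-\alpha)$), and removes an exceptional set $D$ with $\dim_{\mathcal H}D\le d-2s<\alpha$. In your construction this can instead be handled directly --- your block bounds, once proved uniformly in the truncation parameter (partial cuts of a block are controlled by the same incomplete-Gauss-sum and geometric-series estimates), show that $\lim_{N\to\infty}S_N(2\pi/q)f(x)$ exists at the relevant points --- but this step has to be stated and proved; as written, the divergence claim is made for an object whose pointwise value on $\Gamma$ has not been defined.
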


\begin{remark}\label{RemarkOnPositiveMeasureOfGamma}
That $\Gamma$ can be chosen to satisfy $\mathcal H^\alpha (\Gamma) >0$ when $\alpha < d$ can be deduced at once from the result of the theorem. Indeed, since \eqref{INTVAL} can be rewritten as 
\begin{equation}
\alpha < d+1 - \frac{2(d+1)}{d}\, s,
\end{equation}
there exists $\alpha'< d$ such that $\alpha < \alpha' < d+1 - \frac{2(d+1)}{d}\, s$. Hence, Theorem~\ref{MainThm} applies and yields a function $f \in H^s(\mathbb T^d)$ and a set $\Gamma'$ with $\operatorname{dim}_{\mathcal H} \Gamma' = \alpha'$ such that \eqref{DivergentSolution} holds for all $x \in \Gamma'$, and satisfies $\mathcal H^\alpha(\Gamma') = +\infty$. 
\end{remark}

Since periodic solutions exhibit more resonances, it is not surprising that more counterexamples are available, which makes it easier to find initial data for which  
the pointwise convergence fails. In the spirit of what was said above, this is why it is 
%(at least superficially)
harder to prove the analogue of Theorem 
\ref{MainThm} on $\R^d$. In fact, this statement would fail for $\alpha \leq d/2$, as we may see comparing it with the sharp convergence result 
proved in~\cite{BarceloBennettCarberyRogers2011}.
In other words, Theorem \ref{MainThm} shows that it is impossible to extend the sharp convergence result of
that work %cite{BarceloBennettCarberyRogers2011} 
to the periodic setting.

We also prove here a sufficient condition for $\alpha$-almost everywhere convergence on $\mathbb T^d$, for which we will need to work with $\alpha$-dimensional measures.
%The lower bound for $\alpha_{\T^d}(s)$ in \eqref{FractalResult} follows once we have an $L^2$ maximal estimate over compactly supported $\alpha$-dimensional measures, which we define here.
\begin{definition}\label{DefADim}
Let $0 < \alpha \leq d$.  We say that the measure $\mu$ is $\alpha$-dimensional if it is a positive Borel measure
% supported in the unit ball $B(0,1)$ 
and satisfies
\begin{equation}\label{C_Alpha}
c_{\alpha}(\mu) = \sup_{\substack{x \in \mathbb T^d \\ r > 0}}
\frac{\mu(B(x,r))}{r^{\alpha}} < \infty.
\end{equation}
\end{definition} 
%Notice that we have included positivity and compact support condition in the definition of $\alpha$-dimensional.
%Since all the arguments we will use will be invariant by space translation, there is no loss of generality in assuming $\supp \mu \subset B(0,1)$.
It is well-known that sufficient conditions for convergence are deduced from $L^p$ estimates for the Schr\"odinger maximal function, which is the main content of the following theorem:
\begin{theorem}\label{Thm2}
Let $d\ge 1$, $0 <  \alpha \leq d$ and 
\begin{equation} \label{sValue}
s > \frac{d}{2(d+2)} (d + 2 - \alpha).
\end{equation}
Then
\begin{equation}\label{MaximalEstimate}
\left\| \sup_{0< t < 1} | e^{it\Delta} f | \right\|_{L^{\frac{2(d+2)}{d}}(\mathbb T^d, d \mu)} \lesssim
 c_{\alpha}(\mu)^{\frac{d}{{2(d+2)}}}
\| f \|_{H^{s}(\T^d)}
\end{equation}
for every $\alpha$-dimensional measure $\mu$. Consequently, 
\begin{equation}
\lim_{t \to 0} e^{it\Delta}f(x) = f(x) \qquad \alpha\text{-almost everywhere}, \qquad \forall f \in H^s(\mathbb T^d).
\end{equation}
%\textcolor{blue}{and all smooth $f \in C^\infty(\mathbb T^d)$}.
\end{theorem}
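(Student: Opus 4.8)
The plan is to reduce \eqref{MaximalEstimate} to a single‑frequency maximal estimate and then exploit the fact that $p:=\tfrac{2(d+2)}{d}$ is exactly the critical exponent of the periodic Strichartz inequality. Decomposing $f=\sum_{N\ge 1}f_N$ into Littlewood--Paley pieces (with $\widehat{f_N}$ supported in $\{\xi\in\Z^d:|\xi|\sim N\}$, $N$ dyadic, and $f_1$ collecting $|\xi|\lesssim 1$) and using Minkowski's inequality together with the Littlewood--Paley characterization of $H^s(\T^d)$, it suffices to prove
\[
\Big\|\sup_{0<t<1}|e^{it\Delta}f_N|\Big\|_{L^{p}(\T^d,d\mu)}\lesssim_{\varepsilon} c_\alpha(\mu)^{\frac{d}{2(d+2)}}\,N^{\frac{d}{2(d+2)}(d+2-\alpha)+\varepsilon}\,\|f_N\|_{L^2(\T^d)}
\]
for every $\varepsilon>0$: summing this in $N$ after inserting $N^sN^{-s}$ and applying Cauchy--Schwarz in $N$ yields \eqref{MaximalEstimate}, since the resulting series $\sum_N N^{\frac{d}{2(d+2)}(d+2-\alpha)+\varepsilon-s}$ converges once $\varepsilon$ is taken smaller than the gap $s-\tfrac{d}{2(d+2)}(d+2-\alpha)$, which is strictly positive by \eqref{sValue}. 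The single‑frequency estimate will be built from three ingredients: (i) the Bourgain--Demeter $\ell^2$‑decoupling Strichartz estimate on $\T^d$, which at the critical exponent reads $\|e^{it\Delta}g\|_{L^{p}(\T^d\times[0,1])}\lesssim_\varepsilon N^\varepsilon\|g\|_{L^2}$ whenever $\widehat g$ is supported in $\{|\xi|\lesssim N\}$; (ii) a fractalization lemma replacing $dx$ by $d\mu$ at the cost $c_\alpha(\mu)N^{d-\alpha}$; and (iii) a Sobolev‑in‑time argument to remove the supremum.

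The fractalization lemma I would use is: if $\widehat g$ is supported in $\{|\xi|\le N\}$, then $\int_{\T^d}|g|^p\,d\mu\lesssim c_\alpha(\mu)\,N^{d-\alpha}\int_{\T^d}|g|^p\,dx$. To prove it, fix $\chi$ with $\widehat\chi\in C_c^\infty$, $\widehat\chi\equiv 1$ on $\{|\xi|\le 1\}$ and supported in $\{|\xi|\le 2\}$, and let $K_N$ be the periodized kernel with Fourier coefficients $\widehat\chi(\xi/N)$, so that $g=g*K_N$ and $|K_N(x)|\lesssim_M N^d(1+N|x|)^{-M}$ for every $M$. Since $\|K_N\|_{L^1}\lesssim 1$, Jensen's inequality gives $|g(x)|^p\lesssim\int|g(y)|^p|K_N(x-y)|\,dy$ (after normalizing), and Fubini reduces matters to bounding $\int N^d(1+N|x-y|)^{-M}\,d\mu(x)$ uniformly in $y$; splitting this over the dyadic shells $|x-y|\sim 2^k/N$ and using $\mu(B(y,2^k/N))\le c_\alpha(\mu)(2^k/N)^\alpha$ bounds it by $c_\alpha(\mu)N^{d-\alpha}\sum_{k\ge 0}2^{(\alpha-M)k}\lesssim c_\alpha(\mu)N^{d-\alpha}$ as soon as $M>\alpha$.

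For the supremum, the key point is that $e^{it\Delta}f_N$ is a trigonometric polynomial in $(x,t)$, hence smooth, so the fundamental theorem of calculus gives, pointwise in $x$,
\[
\sup_{0<t<1}|e^{it\Delta}f_N(x)|^p\le\int_0^1|e^{it\Delta}f_N(x)|^p\,dt+p\int_0^1|e^{it\Delta}f_N(x)|^{p-1}\,|e^{it\Delta}\Delta f_N(x)|\,dt,
\]
using $\big|\partial_t|F|^p\big|\le p|F|^{p-1}|\partial_tF|$, $\partial_t e^{it\Delta}f_N=i\,e^{it\Delta}\Delta f_N$, and that the time‑average of $|e^{it\Delta}f_N(x)|^p$ over $[0,1]$ dominates its infimum. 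I would then integrate in $d\mu$, apply the fractalization lemma at each fixed $t$ to both $e^{it\Delta}f_N$ and $e^{it\Delta}\Delta f_N$ (each Fourier‑supported in $\{|\xi|\lesssim N\}$), use Hölder in $(x,t)$ with exponents $\tfrac{p}{p-1}$ and $p$ on the cross term, and finish with the Strichartz estimate and $\|\Delta f_N\|_{L^2}\lesssim N^2\|f_N\|_{L^2}$, obtaining
\[
\int_{\T^d}\sup_{0<t<1}|e^{it\Delta}f_N|^p\,d\mu\lesssim_\varepsilon c_\alpha(\mu)\,N^{d-\alpha+\varepsilon}\,\|f_N\|_{L^2}^p+c_\alpha(\mu)\,N^{d-\alpha+2+\varepsilon}\,\|f_N\|_{L^2}^p .
\]
The second term dominates; taking $p$‑th roots and recalling $\tfrac{d-\alpha+2}{p}=\tfrac{d}{2(d+2)}(d+2-\alpha)$ gives exactly the single‑frequency estimate. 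I find it clarifying that the ``$+2$'' in $d+2-\alpha$ is precisely the cost of the one time derivative, entering only to the power $1/p=\tfrac{d}{2(d+2)}$.

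Finally, to deduce $\alpha$‑almost everywhere convergence I would run the standard density argument: approximate $f$ in $H^s(\T^d)$ by smooth $f_k$, for which $e^{it\Delta}f_k\to f_k$ uniformly as $t\to 0$, so that with $g_k:=f-f_k$ one has $\limsup_{t\to 0}|e^{it\Delta}f-f|\le\sup_{0<t<1}|e^{it\Delta}g_k|+|g_k|$. Because $\tfrac{d}{2(d+2)}(d+2-\alpha)>\tfrac{d-\alpha}{2}$ (equivalently $\alpha>0$), \eqref{sValue} forces $s>\tfrac{d-\alpha}{2}$, so $|g_k|$ is controlled in $L^2(\T^d,d\mu)$ by $c_\alpha(\mu)^{1/2}\|g_k\|_{H^s}$ via a transference estimate of the same type as the fractalization lemma. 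Hence for each $\lambda>0$ the set $\{x:\limsup_{t\to 0}|e^{it\Delta}f(x)-f(x)|>\lambda\}$ is $\mu$‑null for every $\alpha$‑dimensional $\mu$; by Frostman's lemma any set of positive $\mathcal{H}^\alpha$‑measure carries a nonzero $\alpha$‑dimensional measure, so this set has zero $\alpha$‑Hausdorff measure, and letting $\lambda\downarrow 0$ along a sequence finishes the proof. I do not expect any single step to be a serious obstacle: the only genuinely new ingredient is the elementary fractalization lemma, and the real care lies in bookkeeping the powers of $N$ so that the losses add up to exactly the threshold in \eqref{sValue}, and in checking that the unavoidable $N^\varepsilon$‑loss in the periodic Strichartz inequality is harmless — which it is, precisely because \eqref{sValue} is strict.
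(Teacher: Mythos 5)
Your proposal is correct, but it takes a genuinely different route from the paper. The paper never invokes decoupling directly: it takes the known Lebesgue-measure maximal estimate $\lVert \sup_{0<t<1}|e^{it\Delta}f|\rVert_{L^{2(d+2)/d}(\T^d)}\lesssim \lVert f\rVert_{H^s(\T^d)}$ for $s>\tfrac{d}{d+2}$ (Proposition~\ref{StrichME}, quoted from \cite{MoyuaVega2008,WangZhang2019,CompaanLucaStaffilani2020}) as a black box, and proves a transference principle (Proposition~\ref{PropTransference}): for band-limited data one writes $e^{it\Delta}f=D_N*e^{it\Delta}f$ with the Dirichlet kernel, applies H\"older, and uses $\int_{\T^d}|D_N|\lesssim(\ln N)^d$ together with $(|D_N|*\mu)(x)\lesssim c_\alpha(\mu)N^{d-\alpha}(\ln N)^d$ (Lemma~\ref{LemmaConvolutionWithMu}) to convert the $d\mu$-norm of the maximal function into its $dx$-norm at the cost $N^{(d-\alpha)/p}$ times logarithms, giving $s>\tfrac{d-\alpha}{p}+s_0$. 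You instead unpack the black box: you rederive the sup-in-$t$ bound directly from the Bourgain--Demeter Strichartz estimate via the fundamental-theorem-of-calculus trick, which costs $N^{2/p}=N^{d/(d+2)}$ --- exactly the paper's $s_0$ --- and you pass from $dx$ to $d\mu$ at fixed $t$ with a smooth-kernel fractalization lemma costing $N^{(d-\alpha)/p}$, exactly the paper's transference loss and without the harmless log factors. The two losses coincide, so you land on the same threshold \eqref{sValue}. What each buys: the paper's argument is modular (any future improvement of the Lebesgue maximal estimate transfers automatically, and nothing beyond the cited results is needed), while yours is self-contained modulo decoupling and applies the change of measure at fixed time, which is technically cleaner than convolving the maximal function itself. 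One small ordering point: for the cross term $\int_{\T^d}\int_0^1|F|^{p-1}|G|\,dt\,d\mu$ you should apply H\"older with respect to $dt\,d\mu$ first and only then the fractalization lemma to the two resulting $p$-th powers (or prove a bilinear variant of the lemma); as phrased you invoke the lemma on a product that is not a $p$-th power, but the intended order is clear and the estimate is unaffected. Your closing density/Frostman argument is the same standard one the paper delegates to \cite{BarceloBennettCarberyRogers2011}, and the needed trace bound indeed follows from your lemma with $p=2$ since \eqref{sValue} implies $s>\tfrac{d-\alpha}{2}$.
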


It looks reasonable to us to conjecture the optimality of \eqref{INTVAL} in the cases
\begin{equation}\label{Conj}
\begin{array}{llll}
\bullet & \Omega = \R & \mbox{and} & \alpha \in (d/2, d], 
 \\
\bullet & \Omega = \T & \mbox{and} & \alpha \in (0,d]. 
\end{array}
\end{equation}
Also, it is usual to reformulate all the previous results and conjectures in terms of the divergence set of $f  :\Omega^d \to \mathbb{C}$, 
\begin{equation*}
\mathcal{D}(f) = \Big\{\, 
x \in \Omega^d \ : \lim_{t\to 0} e^{it \Delta}f(x) \neq f(x)
\,\Big\},
\end{equation*}
and of its dimension; in other words, by looking for 
\begin{equation*}
\alpha_{\Omega^d}(s) = 
\sup_{f\in H^{s}(\Omega^d)} \dim_{H}\big( \mathcal{D}(f) \big), 
\end{equation*}
the largest possible Hausdorff dimension of the divergence sets of all initial data in $H^s(\Omega^d)$ for a fixed $s \geq 0$.
%where $\dim_{H}$ denotes the Hausdorff dimension. 
%In other words we are denoting~$\alpha_{\Omega^d}(s)$ the worst possible Hausdorff dimension 
Taking into account that due to the Sobolev embeddings the divergence sets are empty for $s> d/2$, the results known so far are 
%It is known that\footnote{It is elementary to show that the divergence sets are empty for $s > d/2$.}
%
\begin{equation*}
  \begin{array}{ll}
\alpha_{\R^d}(s) =  d,  &  s  <   \frac{d}{2(d+1)}, \quad  \text{\cite{DahlbergKenig1982, Bourgain2016}},
  \\ & \\ 
\alpha_{\R^d}(s) \in   \left[d+\frac{d}{d-1}-\frac{2(d+1)}{d-1} s,  \, d+1 - \frac{2(d+1)}{d} s \right] ,  &  s \in \left[ \frac{d}{2(d+1)},   \frac{d+1}{8} \right), \quad \text{\cite{LucaRogers2019_2, DuZhang2019}},
  \\  & \\ 
\alpha_{\R^d}(s) \in \left[  d+1-\frac{2(d+2)}{d}s, \, d+1 - \frac{2(d+1)}{d} s \right],  & s \in \left[ \frac{d+1}{8},  \frac{d}{4} \right), \quad \text{\cite{LucaRogers2017, DuZhang2019}},  
  \\ & \\ 
\alpha_{\R^d}(s) = d-2s , & s \in \left[ \frac{d}{4}, \frac{d}{2} \right], \quad \text{\cite{BarceloBennettCarberyRogers2011}}. 
\end{array}
\end{equation*}
In the periodic case, Theorems~\ref{MainThm} and \ref{Thm2} that we prove here can be rewritten as
\begin{equation}\label{FractalResult}
\begin{array}{ll}
\alpha_{\T^d}(s)=  d,  &  \quad s  <   \frac{d}{2(d+1)}
\\ & \\
\alpha_{\T^d}(s) \geq  d+1 - \frac{2(d+1)}{d} s,  &   \quad  s  \in \left[ \frac{d}{2(d+1)},  \frac{d}{2} \right],
 \\ 
 &
 \\
\alpha_{\T^d}(s) \leq  d+2 - \frac{2(d+2)}{d} s , &   \quad s \in \left(\frac{d}{d+2}, \frac{d}{2}\right],
 \end{array}
\end{equation}
and the conjecture \eqref{Conj} becomes 
\begin{equation}
\begin{array}{ll}
\alpha_{\R^d}(s) = d+1 - \frac{2(d+1)}{d} s,  &  \quad s \in \left[ \frac{d}{2(d+1)}, \frac{d}{4} \right) 
\\ & \\
\alpha_{\T^d}(s) = d+1 - \frac{2(d+1)}{d} s,  &   \quad  s \in \left[ \frac{d}{2(d+1)}, \frac{d}{2} \right].
 \end{array}
\end{equation}

The proof of Theorem~\ref{MainThm} elaborates on the resonances of the simple family of solutions
\begin{equation}\label{fdhsjkfskl}
e^{it\Delta} f(x) \qquad \text{ such that } \qquad f(x) = \sum_{k \in \mathbb{Z}^{d}, \,  |k_\ell| \leq N} \,  e^{ik \cdot  x },
% f(x) = \sum_{k \in \mathbb{Z}^{d}, \,  |k_\ell| \leq N} \, \prod_{\ell = 1, \ldots, d} e^{ik_\ell x_\ell }
\end{equation}
when the frequency parameter $N$ is very large. Note that $f$ is nothing more than the 
Dirichlet kernel. This family of solutions was used by Bourgain \cite{Bourgain1993} 
as a counterexample to show that periodic Strichartz estimates necessarily require a logarithmic derivative loss, that is, that when $d=1$ and $f$ as in \eqref{fdhsjkfskl}, one has
\begin{equation}
\int_{\T^2} |e^{it\Delta} f(x)|^6 dx \,  dt \gtrsim \ln (N) \,  \| f \|^6_{L^{2}(\mathbb T)}. 
\end{equation}
%\textcolor{red}{to prove that periodic Strichartz $L^6(\T^2)$ estimates requires necessary a logarithmic frequency loss} 
In the same spirit, and also in $d=1$, Moyua and Vega \cite{MoyuaVega2008} used \eqref{fdhsjkfskl} to disprove~$L^p(\T)$ estimates for the maximal Schr\"odinger operator $\sup_{t} |e^{it\Delta}f(x)|$ when $f \in H^{s}(\T)$ and $s < 1/4$. The key mathematical tool to understanding the behavior of the solutions~\eqref{fdhsjkfskl} for $N \gg1$ is the quadratic Gauss sums which we introduce later in~\eqref{GaussSum}. 

From a physical point of view, it was shown in \cite{BerryKlein1996} that \eqref{fdhsjkfskl} can be considered as a model for the Talbot effect in optics. For an introduction in the topic, we refer the reader to \cite{BerryMarzoliSchleich2001}. Fractal patterns were studied in \cite{BerryKlein1996}, so it is not surprising that \eqref{fdhsjkfskl} is a source of fractal counterexamples to several problems. Much work has been done in the context of dispersive PDEs related to the Talbot effect, we refer to \cite{ErdoganTzirakis2016} 
%and the references therein 
for a nice introduction. 
In particular, due to this connection to dispersive PDEs, the Talbot effect was discovered to govern the evolution of polygonal vortex filaments \cite{delaHozVega2014,delaHozVega2018,BanicaVega2018,delaHozKumarVega2019} and related to the celebrated Riemann non-differentiable function \cite{BanicaVega2022} and to its previously unexplored geometric interpretation \cite{Eceizabarrena2019,Eceizabarrena2020,Eceizabarrena2021}.

It is worth mentioning that, in $\mathbb{R}^n$, solutions exhibiting periodic constructive interference patterns on small times scales like \eqref{fdhsjkfskl} were used in \cite{BarceloBennettCarveryRuizVilela2007} as counterexamples to disprove dispersive estimates related to the Schr\"odinger operator. After a suitable modulation \cite{LucaRogers2017} or 
a pseudo-conformal transformation \cite{DemeterGuo2016}, these solutions can be also used to prove non trivial (although not optimal) necessary conditions for local $L^p$ bounds for the maximal Schr\"odinger operator. Comparing these results with Theorem \ref{MainThm}, we note again that the necessary conditions in the periodic setting proved in this article are stronger due to the presence of more resonances.   

Regarding Theorem~\ref{Thm2}, the estimate \eqref{MaximalEstimate} was proved when $\mu$ is the Lebesgue measure in \cite{MoyuaVega2008} for $d=1$, in \cite{WangZhang2019} for $d=2$ and in \cite{CompaanLucaStaffilani2020} for $d \geq 3$, using periodic Strichartz estimates \cite{BourgainDemeter2015}. We will be able to promote these to $\alpha$-dimensional measures by analyzing their behavior when convoluting against the Dirichlet kernel.

The structure of this article is as follows. We begin by setting notation, elemental tools and auxiliary well-known results in Section~\ref{SECTION_Setup}. In Sections~\ref{SECTION_MainTheorem} and \ref{SECTION_Measure} we prove Theorem~\ref{MainThm}. We first build an initial datum $f$ and a set $\Gamma$ where the divergence property \eqref{DivergentSolution} holds, and then we prove that the dimension of the set $\Gamma$ is $\alpha$. In Section~\ref{SectionThm2} we prove Theorem~\ref{Thm2}. We conclude with Appendix~\ref{Appendix}, where we prove that the problem is well posed, that is, that the solution $e^{it\Delta}f(x)$ is well defined $\alpha$-almost everywhere for $f \in H^s(\mathbb T^d)$ as long as $d-\alpha < 2s$.

\section{Preliminaries}\label{SECTION_Setup}

Given $f : \T^d \to \mathbb{C}$ and $k \in \Z^d$, let 
\begin{equation}
\hat{f}(k) = \frac{1}{(2 \pi)^{d}} \int_{\mathbb{T}^d} f(x)\,  e^{-i k \cdot x}  \, dx
\end{equation}
be its Fourier coefficients. Define the solution to the linear Schr\"odinger equation as the pointwise limit of the partial Fourier sums in cubes, namely 
\begin{equation}\label{SolSchr}
e^{it\Delta} f(x) = \lim_{N \to \infty} S_{N}(t)f(x) ,
\end{equation}
where
\begin{equation}\label{rt}
S_{N}(t)f(x) =
\sum_{\substack{ k \in \mathbb{Z}^d \\ |k_\ell| \leq N \\ \ell=1, \ldots, d} }   \hat{f}(k)\, e^{ ik\cdot x -  i  | k |^{2} t } .
%\qquad 
%\Psi_{N}(k) = \Psi(N^{-1} k) ,
\end{equation}
%where $\Psi : \R^d \to [0,1]$ is a smooth bump function of the origin, so that the sum is well-defined for all $x \in \T^d$ and for all $f \in L^{2}(\T^d)$. 
%Notice that
%$$
%\lim_{N \to \infty} \Psi_{N}(k) = 1, \qquad \forall k \in \Z^d.
%$$
%For
%convenience we take  
%\begin{equation}\label{Def:Psi}
%\Psi(\xi) = \prod_{\ell=1}^{d} \psi(\xi_\ell),
%\end{equation}
%where~$\psi : \R \to [0,1]$ is smooth, symmetric with respect the origin,
%supported in the interval $[-2,2]$ and equal to one on $[-1,1]$ (here $\xi = (\xi_1, \ldots, \xi_d)$). 
%Moreover we take $\psi$ non increasing in $[0, \infty]$. 
%%
The limit \eqref{SolSchr} is usually taken with respect to the $L^2$ norm, but here we take all limits pointwise, at each point $x$ that they exist. 
When $f \in L^2$, it is known that the limit exists pointwise for almost every $x \in \T$ and that it coincides with the $L^2$ limit. The result when $d = 1$ is due to Carleson \cite{Carleson1966},
whose proof extends to higher dimensions as proved, for instance, in~\cite{Fefferman1971}.  
In Appendix~\ref{Appendix}, we show that this limit exists $\alpha$-almost everywhere for every $f \in H^{s}$ with $s \in (0, d/2]$, as long as $\alpha>d-2s$.
% see Proposition \ref{Carls>0L2}. 
This can be regarded as a refinement of Carleson's result, although it does not recover it. 
%This refines Carleson's theorem, but of course it does not recover it.
%We also point out that solutions defined as in \eqref{SolSchr} coincide with the usual $L^2$--limit solutions, almost everywhere with respect to Lebesgue measure.

% 
%A more natural choice for the function $\psi$ would be $\psi=\chi_{[-N, N]}$ (where $\chi_{\{ \cdot \}}$ is the characteristic function) 

The Dirichlet kernel 
\begin{equation}\label{DirichletKernel}
D_N(x) = \prod_{\ell = 1}^{d} d_N(x_\ell),  \quad d_N(x_\ell) = \sum_{\substack{ k_\ell \in \mathbb{Z} \\ |k_\ell | \leq N}} e^{i k_\ell x_\ell }
\end{equation}
allows for alternative representations of the Fourier truncations above given that the solution \eqref{SolSchr} is well defined, since
\begin{equation}
S_{N}(t) f(x) = D_N * e^{it\Delta} f(x) \quad \text{ and } \quad S_{N}(0) f(x) = D_N * f(x).
\end{equation}
%
%
%
%
%
% If we take $\psi:= \left(1 - \frac{|\cdot|}{N}\right)\chi_{[-N, N]}$ 
%we get the Frej\'er kernel represenation
%\begin{equation}
%S_{N} f(x) = F_N * f(x), \qquad 
%F_N := \frac{1}{N}  \sum_{k=0}^{N-1} D_k 
%\end{equation}
%and the limit \eqref{SolSchr} coincides with the Ces\`aro limit of the partial Fourier sums. The proof of theorem \ref{MainThm} 
%straightforwardly adapts to these kernels. The adaptation of the other results we present requires some extra work.
%Let us also recall that for all these choices of $\Psi$ the limit in \eqref{SolSchr} is attained a.e. with respect to 
%the Lebesgue measure\footnote{The case of the Dirichlet kernel is a celebrated result of Carleson. In the case of the Frej\'er kernel this is 
%easily proved taking advantage of the positivity of the kernel (and one has indeed nicer convergence properties).}, so that 
%in this case we can always work with the 
%direct representation \eqref{DirectRepr}
%and there is no ambiguity to denote simply
%\begin{equation}\label{DirectRepr}
%e^{it\Delta} (x) :=
%\sum_{k \in \mathbb{Z}^d}  
%e^{ ix\cdot\xi -  i t | \xi |^{2} } \hat{f}(n) \, .
%\end{equation}
%
%

Also, the generalized quadratic Gauss sums will be the key to analyzing the behavior of the solution in the proof of Theorem~\ref{MainThm}.
%the divergent solutions \eqref{DivergentSolution}.
It is well-known (see, for instance, \cite[Appendix A]{delaHozVega2014}) that for any $q \in \mathbb{N}$ and $p,r \in \mathbb{Z}$ such that $\operatorname{gcd}(r,q) = 1$ we have 
\begin{equation}\label{GaussSum}
 \left| \sum_{ k = 0}^{ q-1  } e^{ 2 \pi  i \frac{r k^2 + p k }{q } } \right|  = \left\{ \begin{array}{ll}
 \sqrt{q}, & \text{ if } q \text{ is odd,} \\
 \sqrt{2q}, & \text{ if } q \equiv 0 \text{ (mod }4) \text{ and } p \equiv 0 \text{ (mod  } 2), \\
  \sqrt{2q}, & \text{ if } q \equiv 2 \text{ (mod }4) \text{ and } p \equiv 1 \text{ (mod  } 2), \\
  0, & \text{ otherwise.}
 \end{array} \right.
\end{equation}

We will be dealing with partial Gauss sums too, which we will manage by the following Van der Corput estimates. Hereafter, we say that $I$ is an integer interval if its boundary points are natural numbers, %We also allow the right extreme to be~$\infty$. 
namely $I = [K_1, K_2]$ with $K_1, K_2 \in \N$.
We denote 
by $|I|$ the length of~$I$ and we denote by $L(I) = K_1$ its left boundary and by $R(I)=K_2$ its right boundary. In general, we will use the short notation $k \in I$ for $k \in \mathbb{N} \cap I $. 

The first lemma corresponds to the Van der Corput second derivative test, which will be useful to show that incomplete Gauss sums will not be of a larger order than the complete ones.  
We refer the reader to \cite[Theorem 2.2]{GrahamKolesnik1991} for a proof. 
\begin{lemma}%[Van Der Corput]
\label{VDCTest2}
Let $I$ be an integer interval and $f(x)$ be a real valued function with two continuous derivatives on $I$. Assume that there exist $M >0$ and $\alpha \geq 1$ such that 
\begin{equation}
M \leq | f''(x) | \leq \alpha\, M, \qquad \forall x \in I.
\end{equation} 
Then, 
\begin{equation}
\left| \sum_{k \in I } e^{2 \pi i f(k)} \right| \lesssim \alpha |I| M^{1/2} + M^{-1/2}.
\end{equation}
In particular, if $M = 1/q$ and $|I| \leq q$, 
\begin{equation}
\left| \sum_{k \in I } e^{2 \pi i f(k)} \right| \lesssim  \alpha \sqrt{q}
\end{equation}
\end{lemma}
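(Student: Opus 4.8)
\medskip
\noindent\textbf{Proof proposal.} The plan is to deduce the second derivative test from the \emph{first} derivative test (the Kusmin--Landau inequality): if $g$ is real-valued, $g'$ is monotone on an integer interval $J$, and $\dist(g'(x),\Z) \geq \lambda$ for all $x \in J$ with $0 < \lambda \leq 1/2$, then $\bigl|\sum_{k \in J} e^{2\pi i g(k)}\bigr| \lesssim \lambda^{-1}$. This is classical --- one proof runs by summation by parts starting from the identity $e^{2\pi i g(k)} = (e^{2\pi i g(k+1)} - e^{2\pi i g(k)})/(e^{2\pi i(g(k+1)-g(k))} - 1)$, using the mean value theorem to get $\dist(g(k+1)-g(k),\Z) \geq \lambda$, the elementary bound $|1 - e^{2\pi i\theta}| \gtrsim \dist(\theta,\Z)$, and the monotonicity of $g'$ to telescope --- but one may also simply cite it, exactly as the paper cites Graham--Kolesnik for Lemma~\ref{VDCTest2} itself.

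First I would clear away the trivial ranges of $M$. If $M \geq 1$ then $\alpha|I|M^{1/2} \geq |I|$ and the claim is immediate, since the sum has at most $|I|+1$ terms; similarly, if $M$ stays in any fixed compact subinterval of $(0,1)$ the term $\alpha|I|M^{1/2}$ already dominates the number of summands. So we may assume $0 < M < 1/4$, so that $M^{-1/2} > 2$. Since $f''$ is continuous and $|f''| \geq M > 0$ throughout $I$, it keeps a constant sign there; assume $f'' > 0$, so that $f'$ is strictly increasing and $f'(R(I)) - f'(L(I)) = \int_I f''(x)\,dx \leq \alpha M|I|$.

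Next I would partition $I$ according to the distance of $f'$ from $\Z$. For each integer $\nu$ put $B_\nu = \{x \in I : |f'(x)-\nu| < M^{1/2}\}$; by monotonicity of $f'$ this is a subinterval of $I$, and since $|f''| \geq M$ it has length at most $2M^{-1/2}$, hence contains $\lesssim M^{-1/2}$ integers. The number of $\nu$ with $B_\nu \neq \emptyset$ is at most $f'(R(I)) - f'(L(I)) + O(1) \lesssim \alpha M|I| + 1$, so estimating the sum trivially over all integer points of $\bigcup_\nu B_\nu$ costs $\lesssim (\alpha M|I|+1)\,M^{-1/2} \lesssim \alpha M^{1/2}|I| + M^{-1/2}$, using $M < 1$. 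On the other hand, $I \setminus \bigcup_\nu B_\nu$ is a disjoint union of $\lesssim \alpha M|I| + 1$ subintervals $J$; on each such $J$ the derivative $f'$ is monotone and, being continuous and avoiding every interval $(\nu-M^{1/2}, \nu+M^{1/2})$, stays inside a single gap between consecutive integers, so $\dist(f'(x),\Z) \geq M^{1/2}$ on $J$. Applying the first derivative test on each $J$ with $\lambda = M^{1/2}$ gives $\bigl|\sum_{k\in J} e^{2\pi i f(k)}\bigr| \lesssim M^{-1/2}$, and summing over the $\lesssim \alpha M|I|+1$ pieces again costs $\lesssim \alpha M^{1/2}|I| + M^{-1/2}$. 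Adding the two contributions yields $\bigl|\sum_{k\in I} e^{2\pi i f(k)}\bigr| \lesssim \alpha M^{1/2}|I| + M^{-1/2}$, which is the first inequality; substituting $M = 1/q$ and $|I| \leq q$ collapses $\alpha|I|M^{1/2} + M^{-1/2}$ to $\lesssim \alpha\sqrt{q}$, which is the ``in particular'' statement.

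I expect the step needing the most care to be the bookkeeping of this partition into near-integer and far-from-integer subintervals --- in particular, checking that the partial sums over the endpoints of each piece are genuinely controlled either by the first derivative test or by the crude length bound, and that the total number of pieces really is $O(\alpha M|I|)$ --- together with a clean proof (or citation) of the Kusmin--Landau inequality, which is the only genuinely non-elementary ingredient.
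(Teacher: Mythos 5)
Your argument is correct, and it is essentially the proof behind the result the paper relies on: the paper gives no proof of this lemma, citing Graham--Kolesnik (Theorem 2.2), and the standard proof there is exactly your reduction to the Kusmin--Landau first-derivative test (Lemma~\ref{VDCTest1} in the paper) after disposing of the trivial range of $M$, splitting $I$ into the $\lesssim \alpha M |I| + 1$ blocks where $f'$ lies within $M^{1/2}$ of an integer (each of length $\lesssim M^{-1/2}$, bounded trivially) and the complementary pieces where $\dist(f'(x),\Z) \ge M^{1/2}$ (each contributing $\lesssim M^{-1/2}$ by the first-derivative test), which yields $\alpha |I| M^{1/2} + M^{-1/2}$ and hence $\alpha\sqrt q$ when $M = 1/q$, $|I| \le q$. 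The one microscopic caveat is the degenerate case of an interval containing a single integer with $M \gg 1$, where the right-hand side $M^{-1/2}$ cannot dominate the single unimodular term, so the "trivial range" reduction is not literally immediate there; but this is an artifact of the lemma's formulation (shared by the standard statement) and is irrelevant to its use in the paper, where the sums in question have many terms.
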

We will also use Van der Corput's first derivative test, whose proof can be found in \cite[Theorem 2.1]{GrahamKolesnik1991}.
\begin{lemma}%[Kusmin--Landau]
\label{VDCTest1}
Let $I$ be an integer interval. Let $f(x)$ be a real valued function with a monotonic continuous derivative such that
$$
\min_{x \in I} \, \dist \left( f'(x) , \mathbb{Z}  \right) \geq \kappa > 0.
$$ 
Then 
$$
\left| \sum_{k \in I } e^{2 \pi i f(k)} \right| \leq C \kappa^{-1}.
$$
\end{lemma}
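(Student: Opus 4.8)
The plan is to prove the bound by summation by parts applied to a geometric‑series rewriting of each summand; this is the classical route to what is usually called the Kuzmin--Landau inequality. I would first normalize: since $f'$ is continuous on $I$ and stays at distance $\ge\kappa$ from $\Z$, its image $f'(I)$ is an interval contained in a single gap $(m+\kappa,\,m+1-\kappa)$ for some fixed $m\in\Z$; replacing $f(x)$ by $f(x)-mx$ changes neither the exponential sum (each $f(k)$ moves by an integer) nor the hypotheses, so I may assume $m=0$, i.e.\ $\kappa\le f'(x)\le 1-\kappa$ throughout $I$.

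Next, writing $I\cap\N=\{L,\dots,R\}$ and extending $f$ affinely past $R$ if needed (which preserves monotonicity of $f'$ and its separation from $\Z$), I would set $\delta_k=f(k+1)-f(k)$ and $w_k=\bigl(1-e^{2\pi i\delta_k}\bigr)^{-1}$. The mean value theorem gives $\delta_k\in[\kappa,1-\kappa]$, so $w_k$ is well defined, and a short computation yields $w_k=\tfrac12\bigl(1+i\cot(\pi\delta_k)\bigr)$, hence $|w_k|=\tfrac1{2\sin(\pi\delta_k)}\le\tfrac1{4\kappa}$ (using $\sin(\pi\delta)\ge2\kappa$ on $[\kappa,1-\kappa]$). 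From the identity $e^{2\pi i f(k)}=w_k\bigl(e^{2\pi i f(k)}-e^{2\pi i f(k+1)}\bigr)$ and summation by parts I get
\[
\sum_{k\in I}e^{2\pi i f(k)}=w_L\,e^{2\pi i f(L)}-w_R\,e^{2\pi i f(R+1)}+\sum_{k=L+1}^{R}(w_k-w_{k-1})\,e^{2\pi i f(k)},
\]
whence $\bigl|\sum_{k\in I}e^{2\pi i f(k)}\bigr|\le 2\max_k|w_k|+\sum_{k=L+1}^{R}|w_k-w_{k-1}|\le\tfrac1{2\kappa}+\sum_{k=L+1}^{R}|w_k-w_{k-1}|$.

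The heart of the matter, and the only step that is not completely routine, is to bound the variation sum by $O(\kappa^{-1})$ \emph{without} picking up a spurious factor of $|I|$. Here I would use that $f'$ is monotonic: then $\delta_k=\int_k^{k+1}f'$ is monotonic in $k$, and since $\delta\mapsto\cot(\pi\delta)$ is monotonic on $(0,1)$, the sequence $\cot(\pi\delta_k)$ is monotonic as well; hence the differences $w_k-w_{k-1}=\tfrac{i}{2}\bigl(\cot(\pi\delta_k)-\cot(\pi\delta_{k-1})\bigr)$ all have the same argument, so the sum telescopes:
\[
\sum_{k=L+1}^{R}|w_k-w_{k-1}|=\tfrac12\bigl|\cot(\pi\delta_R)-\cot(\pi\delta_L)\bigr|\le\tfrac1{2\kappa},
\]
using $|\cot(\pi\delta)|\le 1/\sin(\pi\delta)\le 1/(2\kappa)$ on $[\kappa,1-\kappa]$. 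Combining the last two displays gives $\bigl|\sum_{k\in I}e^{2\pi i f(k)}\bigr|\le\kappa^{-1}$, as claimed. Alternatively, this estimate is exactly \cite[Theorem~2.1]{GrahamKolesnik1991}, which one may simply invoke.
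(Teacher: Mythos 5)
Your proof is correct. Note that the paper does not prove Lemma~\ref{VDCTest1} at all: it simply invokes \cite[Theorem~2.1]{GrahamKolesnik1991}. Your argument is the standard Kuzmin--Landau proof underlying that reference, and it is carried out correctly: the reduction to $\kappa\le f'\le 1-\kappa$ via subtracting $mx$, the weights $w_k=(1-e^{2\pi i\delta_k})^{-1}=\tfrac12(1+i\cot(\pi\delta_k))$ with $|w_k|\le (4\kappa)^{-1}$ (using $\sin(\pi\delta)\ge\sin(\pi\kappa)\ge 2\kappa$ on $[\kappa,1-\kappa]$), the affine extension past $R$ needed to define $\delta_R$, the Abel summation identity, and the key telescoping step, which works exactly because monotonicity of $f'$ makes $\delta_k$, hence $\cot(\pi\delta_k)$, monotone in $k$, all check out and even give the explicit constant $C=1$. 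One small remark: the hypothesis as printed, $\max_{x\in I}\dist(f'(x),\Z)\ge\kappa$, is evidently a misprint for the uniform bound $\dist(f'(x),\Z)\ge\kappa$ for all $x\in I$; this is how the lemma is stated in \cite{GrahamKolesnik1991} and how it is applied later in the paper, and it is the hypothesis your proof (correctly) uses.
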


Finally, we present Abel's inequality in the following lemma.
\begin{lemma}%[Abel]
\label{DirichletSumLemma}
Let $I$ be an integer interval. Let $ a_k \geq 0$ be a sequence of real numbers and $b_k$ a sequence of complex numbers such that 
\begin{enumerate}
\item $a_{k+1} \leq a_k,$ 
\item
$\left| \sum_{k \in I' } b_k  \right| \leq \mathcal{C} , \quad  \mbox{$\forall$ integer interval $I' \subseteq I$}$.
\end{enumerate}
Then, 
\begin{equation}\label{DirichletTest}
\left| \sum_{k \in I'} a_k b_k \right| \leq  \mathcal{C} a_{L(I')} , \quad  \mbox{$\forall$ integer interval $I' \subseteq I$}.
\end{equation}
If (1) is replaced with $a_{k+1} \geq  a_k$, then 
$$
\left| \sum_{k \in I'} a_k b_k \right| \leq  \mathcal{C} a_{R(I')}, \quad  \mbox{$\forall$ integer interval $I' \subseteq I$}.
$$
\end{lemma}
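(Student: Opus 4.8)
\textbf{Proof proposal for Lemma~\ref{DirichletSumLemma} (Abel's inequality).}

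The plan is to prove the first statement by the standard summation by parts argument, and then derive the second statement from the first by reversing the order of summation. Fix an integer interval $I' = [m, n] \subseteq I$ and set $B_k = \sum_{j \in I', \, j \le k} b_j$ for $k \in I'$, with the convention $B_{m-1} = 0$. Hypothesis (2) applied to the subintervals $[m,k] \subseteq I' \subseteq I$ gives $|B_k| \le \mathcal{C}$ for every $k \in I'$, and similarly $|B_k - B_{j-1}| = |\sum_{j \le \ell \le k} b_\ell| \le \mathcal{C}$ for $m \le j \le k \le n$; the bound $|B_k| \le \mathcal C$ is the only one we shall actually use.

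The key step is the Abel rearrangement
\begin{equation}
\sum_{k \in I'} a_k b_k = \sum_{k=m}^{n} a_k (B_k - B_{k-1}) = a_n B_n + \sum_{k=m}^{n-1} (a_k - a_{k+1}) B_k,
\end{equation}
which follows by reindexing the telescoping sum. Now I would take absolute values and use the triangle inequality together with $|B_k| \le \mathcal{C}$:
\begin{equation}
\left| \sum_{k \in I'} a_k b_k \right| \le \mathcal{C} \left( a_n + \sum_{k=m}^{n-1} |a_k - a_{k+1}| \right).
\end{equation}
Here is where monotonicity enters: since $a_{k+1} \le a_k$ by hypothesis (1), each term $a_k - a_{k+1}$ is nonnegative, so the sum telescopes to $a_m - a_n$, and the quantity in parentheses collapses to $a_n + (a_m - a_n) = a_m = a_{L(I')}$. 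This yields \eqref{DirichletTest}. (Nonnegativity of the $a_k$ is not even needed for this direction; it is listed in the hypotheses presumably for the intended applications.)

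For the second statement, suppose instead $a_{k+1} \ge a_k$ on $I$. The cleanest route is to reduce to the decreasing case: for the fixed interval $I' = [m,n]$, define $\tilde a_k = a_{m+n-k}$ and $\tilde b_k = b_{m+n-k}$, which simply reverses the order of the indices in $I'$. Then $\tilde a_k$ is nonincreasing, and the partial sums of $\tilde b_k$ over any integer subinterval of $I'$ coincide with partial sums of $b_k$ over an integer subinterval of $I'$, hence are bounded by $\mathcal{C}$; applying the first part gives $|\sum_k \tilde a_k \tilde b_k| \le \mathcal{C}\, \tilde a_{L(I')} = \mathcal{C}\, a_n = \mathcal{C}\, a_{R(I')}$, and since $\sum_k \tilde a_k \tilde b_k = \sum_k a_k b_k$, the claim follows. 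I do not anticipate a genuine obstacle here; the only mild subtlety is bookkeeping with the endpoint conventions $L(I')$ and $R(I')$ and making sure the reflection maps the family of subintervals of $I'$ onto itself, which it does.
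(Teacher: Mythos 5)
Your proof is correct and follows essentially the same route as the paper: Abel summation by parts with the partial sums $B_k$ bounded by $\mathcal{C}$, telescoping via monotonicity, and handling the nondecreasing case by the reflection $k \leftrightarrow K_1+K_2-k$, which is exactly the paper's change of variables. The only quibble is your parenthetical claim that nonnegativity of the $a_k$ is not needed: the step where $|a_n|$ is replaced by $a_n$ (and indeed the validity of the final bound $\mathcal{C}\,a_{L(I')}$, which can fail if, say, all $a_k<0$) does use $a_k \geq 0$.
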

\begin{proof}
Let $I' = [K_1, K_2]$.
We only consider the case $a_{k+1} \leq a_k$, since the case $a_{k+1} \geq a_k$ can be handled by changing variables
$k \leftrightarrow K_1 + K_2 - k$. 
The inequality~\eqref{DirichletTest} easily follows by the summation by parts formula
\begin{equation}
\sum_{k=K_1}^{K_2} a_k b_k = a_{K_2} B_{K_2} + \sum_{k=K_1}^{K_2-1} B_k(a_k - a_{k+1}), 
\quad \text{ where } \quad 
B_{k} = \sum_{\ell=K_1}^{k}b_\ell.
\end{equation}
 By \textit{(2)}, we have    $|B_{k}| \leq \mathcal{C}$ for every $k$, so  
\begin{equation}
\begin{split}
\left| \sum_{k=K_1}^{K_2} a_k b_k  \right| &  \leq \mathcal{C}\, |a_{K_2}| + \mathcal{C}\,  \sum_{k=K_1}^{K_2-1} |a_k - a_{k+1}|  = \mathcal{C} \left(  a_{K_2} + \sum_{k=K_1}^{K_2-1} (a_k - a_{k+1}) \right) \\
& = \mathcal{C} \left(  a_{K_2} + a_{K_1} - a_{K_2}  \right) = \mathcal{C}\, a_{K_1}.
\end{split}
\end{equation}  
\end{proof}

\section{Proof of Theorem \ref{MainThm}}\label{SECTION_MainTheorem}

We begin the proof of Theorem~\ref{MainThm} with a remark on the well-posedness of the problem. We said in the introduction, and we prove in Appendix~\ref{Appendix}, that the solution $e^{it\Delta}f(x)$ given as the pointwise limit in \eqref{SolSchr} is well-defined $\alpha$-almost everywhere for $f \in H^s(\mathbb T^d)$ as 
long as $d-\alpha < 2s$. Since $\alpha >0 $ implies 
\begin{equation}
\frac{d-\alpha}{2} < \frac{d}{2(d+1)}(d+1-\alpha), 
\end{equation}
we will assume throughout the proof that $s$ is such that
\begin{equation}\label{RestrictionForWellPosedness}
\frac{d-\alpha}{2} < s < \frac{d}{2(d+1)}(d+1-\alpha),
\end{equation}
so that it is precisely the case. For smaller values of $s$, the initial datum itself may not be well defined with respect to the $\alpha$-Hausdorff measure, in which case the problem is not well posed.

%\begin{remark}\label{Fact1}
%Since for $\alpha \in (0, d]$ we have $\frac{d-\alpha}{2} < \frac{d}{2(d+1)}(d+1-\alpha)$, in the proof of Theorem \ref{MainThm} we can 
%assume $s > \frac{d-\alpha}{2}$,
%which ensures that at any time $t \in \R$ 
%the solution (defined as the pointwise limit \eqref{SolSchr}) is well defined, except than on exceptional sets 
%of dimension $< \alpha$. In fact the argument we present clearly works for $s \in \left(\frac{d-\alpha}{2}, \frac{d}{2(d+1)}(d+1-\alpha) \right)$. On the other hand,  
%when  
%$s \leq \frac{d-\alpha}{2}$ the real issue is that the initial datum may already be divergent on sets of positive $\alpha$-Haursdoff 
%measure,
%so that the refined pointwise convergence problem is not well settled.
%\end{remark}

We are now ready to prove Theorem \ref{MainThm}.
Let 
\begin{equation}
s_{\alpha} = \frac{d}{2(d+1)} (d + 1 - \alpha) ,
\end{equation}
and define the initial datum  
\begin{equation}\label{TestinfFunctBennFinLLD}
f = \sum_{j \in \mathbb{N}}  f_j
\quad \text{ such that } \quad
f_j(x) = \lambda^{-j \left( s_{\alpha} + \frac{d}{2} - \delta \right) } \prod_{\ell =1}^{d} \, \, \sum_{n_\ell = \lambda^{j-1}}^{\lambda^j-1}   e^{i n_\ell  x_\ell },
\end{equation}
%$$
%u(x,t) = \sum_{|k| \leq N}  e^{i( k \cdot x -  |n|^2 t)} 
% \, .
%$$
where $\lambda$ is an integer satisfying $\lambda \gg 1$ and $0 < \delta \ll 1$. Since
$\| f_j \|_{H^{s}(\T^d)} \simeq \lambda^{-j \left( s_{\alpha}  - \delta -s \right) }$, we have $f \in H^{s}(\T^d)$ as long as $s < s_{\alpha} - \delta $.
Let $0 < \kappa  < 1$ and 
\begin{equation}\label{DefT_j}
T^j =   \left\{ \frac{2 \pi}{q} \, : \,  q \in  \mathbb{N} \cap \left[ \kappa\, \lambda^{j\frac{\alpha}{d+1}} , \lambda^{j\frac{\alpha}{d+1}} \right] ,\quad q \equiv 0 \text{ (mod }4) \right\}. 
\end{equation}
For each time $t \in T^j$, denote by $q(t) = 2 \pi / t$ the corresponding integer, and let 
% and\footnote{The choice $\frac{1}{100\lambda^j}$ will be motivated by \eqref{ReasonForTheChoiceOfEpsilon}.}
\begin{equation}\label{Def:X_j}
X_t^j = \bigcup_{\substack{p \in \mathbb{Z}^d \cap \left[ \frac{q(t)}{4}, \frac{q(t)}{2} \right]^d \\ p_i \equiv 0 \text{ (mod 2)}, \, \forall i =1, \dots, d } }   \left\{ \,  2\pi \left( \frac{ p}{q(t)} + \varepsilon \right) \, : \, \frac{1}{200 \lambda^j} \leq \varepsilon_i \leq \frac{1}{100\lambda^j}  \,\, \forall i = 1, \ldots, d \, \right\} ,
\end{equation}
where we define $p = (p_1, \ldots, p_d) \in \mathbb{Z}^d$ and $ \varepsilon = (\varepsilon_1,\ldots, \varepsilon_d)\in\mathbb{R}^d$. 
Notice that $X_t^j$ is a union of translations of closed cubes that have side length and distance from the rational $p/q$ equal to $(200 \lambda)^{-j}$.   
% contained in a 
%cube of side $(50 \lambda)^{-j}$. These sub-cubes are placed at the vertex of the bigger cube. The reason for which we can not just take the union 
%of translation sof this bigger cube is essentially technical.  
The objective will be to show that the modulus of the solution is large at the times $t \in T^j$ over the sets $X_t^j$, for every $j \in \mathbb{N}$.
%(see \eqref{abc}). 
That will mean that if we define
\begin{equation}\label{Def:Gammaj}
\Gamma^j = \bigcup_{t \in T^j} X_t^j, \qquad j \in \mathbb{N},
\end{equation}
for every $x \in \Gamma^j$ there will be a time $t\in T^j$ such that the solution is large in $(x,t) \in (X^j_t,T^j)$. Thus, letting
\begin{equation}\label{Gamma}
\Gamma = \bigcap_{n \in \N} \bigcup_{j > n} \Gamma^j,
\end{equation} 
for every $x \in \Gamma$ there will exist a sequence $t_{j_k} \in T^{j_k}$ such that the solution $|e^{it_{j_k}\Delta}f(x)|$ is large for every $k$, leading to 
\begin{equation}\label{Final}
\limsup_{t \to 0} | e^{it \Delta}f(x) | = \infty, 
\qquad \forall x \in \Gamma \setminus D,
\end{equation}
where $D$ is a set with Hausdorff dimension strictly smaller than $\alpha$ that plays a technical role. 
For the theorem to follow, we will also require that $\Gamma$ has Hausdorff dimension $\alpha$. The Jarn\'ik-Besicovitch theorem suggests that it is indeed the case; we devote  Section~\ref{SECTION_Measure} to proving this last claim.

% OLD TEXT REFERRING TO OLD SECTION 4
%Since the $\alpha$-Hausdorff measure of $\Gamma$ is strictly positive the statement follows. The fact that 
%the $\alpha$-Hausdorff measure of $\Gamma$ is strictly positive is a simple variant of the Jarn\'ik-Besicovitch theorem, 
%however we will give a proof of it in 
%the next section, for the sake of completeness. 
%In fact, in the next section we will prove that when $\alpha = d$ the the Lebesgue measure of $\Gamma$ is strictly positive (which is the required statement)
%and when $\alpha < d$ the $\beta$-Hausdorff measure of $\Gamma$ is strictly positive for all $\beta < \alpha$ (see \eqref{HausDimOfGamma}). Even if this looks weaker than what we need 
%this is not the case since we have an $\epsilon$-room, given by the fact that \eqref{INTVAL} is a strict inequality. 
%More precisely, we could have started the proof with an $\alpha'>\alpha$ that also satisfies 
%$$
%s < \frac{d}{2(d+1)}\big( d + 1- \alpha' \big),
%$$
%and performed all of the previous arguments for this $\alpha'$.  

Let us be more precise now. To prove \eqref{Final}, it will be enough to show     
\begin{equation}\label{abc}
\begin{array}{llll} 
(i) & | S_{N}(t)  f_j ( x ) |
 \simeq \lambda^{j \delta } , &  (x, t) \in (X_t^j, T^j)  , &  \\
 (ii) & | S_{N}(t)  f_k ( x ) |
 \lesssim \lambda^{k \delta}, &  (x, t) \in (X_t^j, T^j), & k < j  , \\
(iii) & 
 | S_{N}(t)  f_k ( x ) |
\lesssim \lambda^{j \delta  - c (k-j)   } , & (x, t) \in (X_t^j, T^j), & k > j  ,   % c = \left( s_{\alpha} + \frac{d}{2} - \delta \right)  
\end{array}
\end{equation}
for $N > \lambda^j$, where $c >0$ is an absolute constant. 
%The proof of \eqref{abc} is postponed. 
%Once we have~\eqref{abc} we can prove \eqref{Final} in the following way. 
Indeed, if $x \in \Gamma^j$, by \eqref{Def:Gammaj} there exists a time $t_{j}(x) \in T^{j}$ such that by \eqref{abc} we have 
\begin{equation}\label{abcBis}
\begin{array}{lll}
(i) & | S_{N}(t_j(x))  f_j ( x ) |
 \simeq \lambda^{j \delta}, &    
\\ 
(ii) & | S_{N}(t_j(x))  f_k ( x ) |
 \lesssim \lambda^{k \delta}, & k < j  , 
 \\
(iii) & | S_{N}(t_j(x))  f_k ( x ) |
\lesssim \lambda^{j \delta  - c(k - j)} , &  k > j     % c = \left( s_{\alpha} + \frac{d}{2} - \delta \right)  
\end{array}
\end{equation}
 for all $N > \lambda^j$. Since we have defined $\Gamma$ as the set of the points which belong to infinitely many $\Gamma^{j}$, for 
any $x \in \Gamma$ there exists an infinite subset $J(x)\subset\mathbb{N}$ with an associated 
sequence of times
$t_{j}(x) \in T^{j}$ for all $j\in J(x)$ such that \eqref{abcBis} is satisfied.
Recalling the definition~\eqref{TestinfFunctBennFinLLD} of $f$, by the triangle inequality and by \eqref{abcBis}(i) we have
\begin{equation}\nonumber
| S_{N}(t_j(x))  f(x) |  \, \geq \,  | S_{N}(t_j(x))   f_j(x) | - | A_1 | - | A_2 | \simeq \lambda^{j \delta} - | A_1 | - | A_2 | ,
\end{equation}
where
\begin{equation}\nonumber
A_1   =  \sum_{k < j}  S_{N}(t_j(x))  f_k(x) ,
\qquad
A_2   =   \sum_{k > j}  S_{N}(t_j(x)) f_k(x) .
\end{equation}
On the other hand, summing \eqref{abcBis}(ii) over $k =1, \ldots , j-1$ and \eqref{abcBis}(iii) over $k > j$, we see that
\begin{equation}\label{Reminders}
|A_1|  + |A_2| \ll \lambda^{j \delta}  .
\end{equation}
Indeed
$$|A_1| \leq \sum_{k =1}^{j-1}  |S_{N}(t_j(x))  f_k(x)| \lesssim \sum_{k =1}^{j-1} \lambda^{k \delta} \simeq \lambda^{(j-1) \delta} = \lambda^{-\delta} \lambda^{j \delta} $$
and
$$
|A_2| \leq \sum_{k \geq j+1}    | S_{N}(t_j(x))  f_k ( x ) | \lesssim \lambda^{j \delta } \sum_{k \geq j+1} \lambda^{  - c(k - j)}
\lesssim \lambda^{j \delta - c(j+1- j)  } = \lambda^{-c } \lambda^{j \delta},
$$
so \eqref{Reminders} holds taking $\lambda$ sufficiently large.
%\textcolor{red}{I think we should write the above inequality with a bit more of detail. Because the $\ll$ there does not mean that it is of a lower order, but that it is $< c\lambda^{j\delta}$ for some very small $c>0$, right?. For Dani: Yes it means $< c\lambda^{j\delta}$ for $c$ very small}
Thus, for any $x \in \Gamma $, 
there is a sequence of times $t_{j}(x) \in T_j$ for $j \in J(x)$
such that
\begin{equation}\label{ghdjksdjghjdskjdh}
| S_{N}(t_j(x)) f(x) | \simeq \lambda^{j \delta},
\end{equation}
for all $N > \lambda^j$. 

We need to take the limit $N \to \infty$ now. Define $T = \bigcup_{j \in \mathbb N}T^j$, and let $t \in T$. Then, the limit of \eqref{ghdjksdjghjdskjdh} exists for all $x \in \mathbb T^d \setminus D_t$, where we call $D_t$ precisely the set of points where the limit does not exist. We said, and we prove in Appendix~\ref{Appendix}, that $\mathcal H^\alpha(D_t) = 0$ as long as $\alpha > d-2s$, which means that $\dim_{\mathcal{H}}D_t \leq d-2s$. Define now $D = \bigcup_{t \in T} D_t$ the set of points where convergence fails for some $t \in T$. Since $T$ is a countable set, $\dim_{\mathcal H}D = \sup_{t \in T} \dim_{\mathcal H} D_t \leq d-2s$, and 
\begin{equation}
\lim_{N \to \infty} S_N(t)f(x) = e^{it\Delta}f(x), \qquad \forall x \in \mathbb T^d \setminus D, \quad \forall t \in T.
\end{equation}
Thus, taking $N \to \infty$ in \eqref{ghdjksdjghjdskjdh} yields
\begin{equation}\label{ReallyFinal}
| e^{i t_j(x)\Delta} f(x) | \simeq \lambda^{j \delta}, \qquad \forall x \in \Gamma \setminus D , \quad \forall j \in J(x), 
\end{equation} 
and then taking $j \to \infty$, we obtain
\begin{equation}\label{LimitIsInfinite}
\lim_{j \to \infty} | e^{i t_{j}(x) \Delta} f(x) |  = \infty, \qquad \forall x \in \Gamma \setminus D.
\end{equation}
From the definition of $T^j$, we have $t_j(x) \simeq \lambda^{-j\frac{\alpha}{d+1}}$, so $\lim_{j\to\infty}t_j(x) = 0$. This, together with \eqref{LimitIsInfinite} yields \eqref{Final}, what we wanted to prove, for $x \in \Gamma \setminus D$. And since we assumed in \eqref{RestrictionForWellPosedness} that $\dim_{\mathcal H} D \leq d-2s < \alpha = \dim_{\mathcal H}\Gamma$, we get $\operatorname{dim}_{\mathcal H}(\Gamma \setminus D) = \alpha$.

Hence, apart from checking that the Hausdorff dimension of $\Gamma$ is $\alpha$, which we do in Section~\ref{SECTION_Measure}, there remains only to establish \eqref{abc}.

\subsection*{Proof of \eqref{abc}(i)}
In fact we will prove that for $N > \lambda^j$
\begin{equation}\label{abc(i)extended}
| S_{N}(t) f_k ( x ) |
 \simeq \lambda^{k \delta - (j-k) \frac{d \alpha}{2(d+1)}}, \quad   (x, t) \in (X_t^j, T^j), \quad  j \left(  \frac{\alpha}{d+1} + \delta \right)  \leq k \leq j
\end{equation}
which implies \eqref{abc}(i) and \eqref{abc}(ii) in the range  $j \left( \frac{\alpha}{d+1} + \delta \right)  \leq k < j$.
Taking $\varepsilon \in \mathbb{R}^d$ with 
\[ |\varepsilon_\ell| \leq \frac{1}{100 \lambda^j} \]
 for every $\ell=1, \ldots, d$, we will show \eqref{abc(i)extended} at points
\begin{equation}\nonumber
t = \frac{2 \pi}{q}, \quad x = 2\pi \left(  \frac{ p}{q} + \varepsilon \right), 
\qquad p \in \mathbb{Z}^d, \ q \in \mathbb{N}, 
\end{equation}
such that $q \simeq \lambda^{j\frac{\alpha}{d+1}}$, $q \equiv 0 \text{ (mod 4})$, $q/4 \leq p_\ell \leq q/2$ and $p_\ell \equiv 0 \text{ (mod 2})$ for all $ \ell = 1, \ldots, d$. 
Since $k \leq j$, we have  
\begin{equation}\label{SolutionWhenKSmallerThanJ}
S_{N}\left( \frac{2 \pi }{q} \right) f_k \left(  2\pi \left( \frac{ p}{q} + \varepsilon \right) \right)  =  
\lambda^{-k \left( s_{\alpha} + \frac{d}{2} - \delta \right) }
\prod_{\ell=1}^d \sum_{n_\ell =\lambda^{k-1}}^{\lambda^{k}-1} e^{ 2 \pi i \left( n_\ell  \left( \frac{p_\ell}{q} + \varepsilon_\ell \right)  -  \frac{n_\ell^2}{q} \right)} ,
\end{equation}
%\begin{align}\nonumber
%S_{N}\left( \frac{2 \pi }{q} \right) f_k \left(  2\pi \left( \frac{ p}{q} + \varepsilon \right) \right) 
%& =  
%\lambda^{-k \left( s_{\alpha} + \frac{d}{2} - \delta \right) }
%\prod_{\ell=1}^d \sum_{n_\ell =\lambda^{k-1}}^{\lambda^{k}-1} e^{ 2 \pi i \left( n_\ell  \left( \frac{p_\ell}{q} + \varepsilon_\ell \right)  -  \frac{n_\ell^2}{q} \right)} ,
%\end{align}
%
so it will suffice to prove 
\begin{equation}\label{Plug4}
\left| \sum_{n_\ell =\lambda^{k-1}}^{\lambda^{k}-1} e^{ 2 \pi i \left( n_\ell  \left( \frac{p_\ell}{q} + \varepsilon_\ell \right)  -  \frac{n_\ell^2}{q}  \right)} \right|
\simeq \lambda^{k - j \frac{\alpha}{2(d+1)} }  ,
\end{equation}
for all $k \in \left( j  \left( \frac{\alpha}{d+1} + \delta \right) ,  j\right]$, because that way we get 
 %writing $ = k + h$ with $h \in \mathbb{N} \cup \{ 0 \}$ we have 
\begin{align}\label{FInalControfOrder}
& \left| S_{N}\left( \frac{2 \pi }{q} \right) 
 f_k \left( 2 \pi \, \left( \frac{p}{q} + \varepsilon \right) \right) \right|
 \simeq 
\lambda^{-k \left( s_{\alpha} + \frac{d}{2} - \delta \right)  }
 \lambda^{k d \left( 1 -   \frac{\alpha}{2(d+1)} \right) } \lambda^{- (j - k) d \frac{\alpha}{2(d+1)} } 
 \\ \nonumber
 &
 \quad \quad \quad
\simeq \lambda^{k \delta - k \left( s_{\alpha} - \frac{d}{2(d+1)} \left( d + 1 - \alpha \right)  \right)  } \lambda^{- (j - k) d \frac{\alpha}{2(d+1)} } 
\simeq 
\lambda^{k \delta}  \lambda^{- (j - k) d \frac{\alpha}{2(d+1)} }  , 
\end{align}
as claimed. 

Let us prove \eqref{Plug4}.  
Let $L_k$ and $R_k$ be respectively the smallest and largest integers that satisfy 
\begin{equation}\label{DefLk}
  \lambda^{k-1} \leq L_k q  \quad     \text{ and }  \,   \quad R_k q \leq  \lambda^k -1,
\end{equation}
which means that
%\begin{equation}\label{RL1}
%R_k q \leq  \leq 2 \lambda^{k},
%\end{equation}
\begin{equation}\label{RL}
L_k q  - \lambda^{k-1} < q  \quad \text{ and }  \quad \lambda^k  - R_k q  <   q + 1.  
\end{equation}
We change variables $n_l = L_k q + m_l q + r$. 
The new values $m_l = 0, \ldots, R_k - L_k – 1$ and $r_l = 0, \ldots, q-1$ account for the old $n_l = L_k q, \ldots, R_k q – 1$.
The corresponding phase in \eqref{Plug4} satisfies
\begin{equation}
 n_\ell \left( \frac{p_\ell}{q} + \varepsilon_\ell \right)  =  ( (L_k + m_\ell ) q + r_\ell) \left( \frac{p_\ell}{q} + \varepsilon_\ell \right) 
% =
%\left( L_k + m_\ell \right)  p_\ell 
\in \mathbb{Z} 
+ \left( L_k + m_\ell \right) q \varepsilon_\ell +   r_\ell \left( \frac{p_\ell}{q} + \varepsilon_\ell \right), 
\end{equation}
and
\begin{equation}
 \frac{n_\ell^2}{q}  = \frac{((L_k + m_\ell) q + r_\ell )^2}{q}  
% =
%... + m_\ell^2 q + 2 m_\ell   +    \frac{r_\ell^2}{q}   ,  
\in \mathbb{Z} +   \frac{r_\ell^2}{q} ,
\end{equation}
so we split the sum \eqref{Plug4} into 
\begin{align}\label{Plug3}
 \sum_{n_\ell =\lambda^{k-1}}^{\lambda^k - 1} e^{ 2 \pi i \left( n_\ell \left(  \frac{p_\ell}{q} + \varepsilon_\ell \right)  -  \frac{n_\ell^2}{q} \right)} 
&   = \!\!\!\!\!\!\!
  \sum_{m_{\ell} = 0}^{R_k - L_k -1} e^{2 \pi i (L_k + m_\ell ) q \varepsilon_\ell}
\sum_{ r_\ell = 0}^{q - 1}e^{ 2 \pi i \left( r_\ell   \left( \frac{p_\ell}{q} + \varepsilon_\ell \right) -  \frac{r_{\ell}^2}{q} \right)}
\\ \nonumber
&
 + \!\!\!\!\!\!\!  
\sum_{ n_\ell \in J_1 \cup J_2} e^{ 2 \pi i \left( n_\ell  \left( \frac{p_\ell}{q} + \varepsilon_\ell \right) -  n_{\ell}^2 \frac1q \right) } 
= I + II  ,
 \end{align}
where 
\begin{equation}
J_1 = \{ \lambda^{k-1}, \ldots, L_k q -1  \}, \quad
J_2 = \{ R_k q, \ldots, \lambda^k - 1 \}.
\end{equation}
In order to evaluate $|I|$ we notice that for $m_\ell = 0, \ldots, R_k - L_k - 1$ we have  
\begin{equation}\label{ReasonForTheChoiceOfEpsilon}
| (L_k + m_\ell) q \varepsilon_\ell | < R_k   q | \varepsilon_\ell | < \lambda^k  \frac{1}{100\lambda^j}
  \leq  \frac{1}{100},
 \end{equation}
where we used \eqref{DefLk} and $k\leq j$. This means that
 all the phases $2\pi (L_k + m_\ell) q \varepsilon_\ell$ are close to zero, so the corresponding exponentials are very close to the unity.  
 %exponentials sum all up giving a factor~$R_k - L_k - 1$.
 Thus
\begin{equation}\nonumber
|I| \simeq (R_k - L_k)  
\left| \sum_{ r_\ell= 0}^{ q-1  } e^{ 2 \pi  i \left( r_\ell \left( \frac{p_\ell}{q} + \varepsilon_\ell \right) -  \frac{r_\ell ^2}{q} \right)} \right| .
\end{equation}
The error in the phase with respect to a Gauss sum is at most $|\varepsilon| q \lesssim \lambda^{-j} \lambda^{j \frac{\alpha}{d+1} } \ll 1$. We show in the forthcoming Lemma \ref{ItRemainsLemma}  that in this situation, we have
\begin{equation}\label{Perturb}
\left| \sum_{ r_\ell= 0}^{ q - 1  } e^{ 2 \pi  i \left( r_\ell \left( \frac{p_\ell}{q} + \varepsilon_\ell \right) -  \frac{r_\ell ^2}{q} \right)} \right| \simeq \sqrt{q}.
\end{equation} 
On the other hand, by \eqref{DefLk} and $k > j\alpha/(d+1)$,  recalling that $q \simeq \lambda^{j\frac{\alpha}{d+1}}$, we have 
$$
R_k - L_k \simeq \frac{\lambda^{k} - \lambda^{k-1}}{q} \simeq \frac{\lambda^k}{q}.
$$
Thus, we arrive at
\begin{equation}\label{Plug1}
|I| \simeq \frac{\lambda^k}{q} \sqrt{q} \simeq \frac{\lambda^k}{\sqrt{q}} \simeq   \lambda^{k - j \frac{\alpha}{2(d+1)} }.
\end{equation}
%where we used $L_k \simeq \frac{\lambda^k}{q}$ (that immediately follows by the definition \eqref{DefLk} of $L_k$) 

The estimate for $|II|$ follows from the Van der Corput second derivative test in Lemma \ref{VDCTest2}. Indeed, by \eqref{RL} we have $|J_1|, |J_2| \leq q$, and the second derivative of the phase of $II$ is $-2/q$, so
% is  easier.  Since by \eqref{RL} we have $|J_1|, |J_2| \leq q$ and the second derivative of the phase of $II$ is $\frac{4 \pi}{q}$, we can invoke the Van der Corput second derivative test (Lemma \ref{VDCTest2}), so that % (here $0 < \varepsilon \ll \delta  \ll 1$) \textcolor{red}{(we can remove the $\ln q$)}
\begin{equation}\label{Plug2}
|II|  \lesssim \sqrt{q} \lesssim \lambda^{j  \frac{\alpha}{2(d+1)}  } \ll \lambda^{k - j \frac{\alpha}{2(d+1)} }  ,
\end{equation}
where the last inequality follows from $j  \left( \frac{\alpha}{d+1} + \delta \right)  < k$ and $\lambda \gg 1$.
Plugging \eqref{Plug1}  and  \eqref{Plug2} into \eqref{Plug3} we obtain \eqref{Plug4} and the proof of \eqref{abc(i)extended} is concluded. 

There remains to prove \eqref{Perturb}, which we do in the following lemma.
\begin{lemma}\label{ItRemainsLemma}
Let $q\in\mathbb N$, $p \in \mathbb Z$ and $\varepsilon \in \mathbb{R}$ such that $|\varepsilon|\, q \ll 1$, $q \equiv 0 \text{ (mod 4)}$ and $p \equiv 0 \text{ (mod 2)}$. Then
\begin{equation}\label{GS}
\left| \sum_{ r = 0}^{ q - 1  } e^{ 2 \pi  i \left( r \left( \frac{p}{q} + \varepsilon \right) -  \frac{r^2}{q} \right)} \right| \simeq \sqrt{q}  ,
\end{equation}
\end{lemma}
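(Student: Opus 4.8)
The plan is as follows. Write $b_r = e^{2\pi i(r p/q - r^2/q)}$, so that the left-hand side of \eqref{GS} is $\lvert S\rvert$ with $S := \sum_{r=0}^{q-1} e^{2\pi i r\varepsilon}\,b_r$. The upper bound $\lvert S\rvert \lesssim \sqrt q$ is immediate from the Van der Corput second derivative test (Lemma~\ref{VDCTest2}): the phase $x(p/q+\varepsilon) - x^2/q$ has second derivative $-2/q$ independently of $\varepsilon$, and the range of summation has length~$q$. So the content of the lemma is the lower bound $\lvert S\rvert \gtrsim \sqrt q$.

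For the lower bound I would compare $S$ with the genuine Gauss sum $S_0 := \sum_{r=0}^{q-1} b_r$ obtained by setting $\varepsilon = 0$. Taking the complex conjugate, $\overline{S_0} = \sum_{r=0}^{q-1} e^{2\pi i(r^2 - pr)/q}$, which is of the form covered by \eqref{GaussSum} with leading coefficient $1$ (hence coprime to $q$); since $q \equiv 0 \pmod 4$ and $-p$ is even, \eqref{GaussSum} gives the exact value $\lvert S_0\rvert = \sqrt{2q}$. Hence it suffices to show $\lvert S - S_0\rvert \le \tfrac12\sqrt{2q}$, for then $\lvert S\rvert \ge \lvert S_0\rvert - \lvert S - S_0\rvert \gtrsim \sqrt q$.

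The estimate on $S - S_0 = \sum_{r=0}^{q-1}(e^{2\pi i r\varepsilon} - 1)\,b_r$ is the only delicate point, because the crude bound $\lvert S - S_0\rvert \le \sum_r\lvert e^{2\pi i r\varepsilon}-1\rvert \le \sum_r 2\pi r\lvert\varepsilon\rvert \lesssim q^2\lvert\varepsilon\rvert$ is too lossy (it would force $\lvert\varepsilon\rvert\ll q^{-3/2}$, much stronger than $\lvert\varepsilon\rvert q\ll1$). Instead I would sum by parts through Abel's inequality (Lemma~\ref{DirichletSumLemma}). First, Lemma~\ref{VDCTest2} applied on an arbitrary integer subinterval $I'\subseteq[0,q-1]$ (the phase still has second derivative $-2/q$) gives $\bigl\lvert\sum_{r\in I'}b_r\bigr\rvert \lesssim \sqrt q =: \mathcal C$. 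Next, decompose $e^{2\pi i r\varepsilon} - 1 = -a_r \pm i\,\widetilde a_r$ with $a_r = 1 - \cos(2\pi r\varepsilon)\ge0$ and $\widetilde a_r = \lvert\sin(2\pi r\varepsilon)\rvert\ge0$; since $\lvert\varepsilon\rvert q\ll1$ the argument $2\pi r\varepsilon$ stays in a small neighbourhood of $0$ for $0\le r\le q-1$, so $a_r$ and $\widetilde a_r$ are nondecreasing in $r$, with $a_{q-1}\lesssim(q\lvert\varepsilon\rvert)^2$ and $\widetilde a_{q-1}\lesssim q\lvert\varepsilon\rvert$. Applying the increasing-sequence version of Lemma~\ref{DirichletSumLemma} to each piece,
\[
\lvert S - S_0\rvert \le \Bigl\lvert\sum_{r} a_r b_r\Bigr\rvert + \Bigl\lvert\sum_{r}\widetilde a_r b_r\Bigr\rvert \lesssim \mathcal C\,a_{q-1} + \mathcal C\,\widetilde a_{q-1} \lesssim \sqrt q\,\bigl((q\lvert\varepsilon\rvert)^2 + q\lvert\varepsilon\rvert\bigr) \lesssim \sqrt q\cdot q\lvert\varepsilon\rvert .
\]
Since $q\lvert\varepsilon\rvert\ll1$, the right-hand side is $\le\tfrac12\sqrt{2q}$ once $q\lvert\varepsilon\rvert$ is below a suitable absolute constant, which finishes the lower bound and hence the proof. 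To summarize, the main obstacle is precisely the one flagged above: one must exploit the cancellation in the partial Gauss sums $\sum_{r\in I'}b_r$ (Van der Corput) together with summation by parts (Abel), rather than estimating $S - S_0$ term by term; everything else is routine.
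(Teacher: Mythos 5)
Your proof is correct and follows essentially the same route as the paper: compare with the unperturbed Gauss sum (of modulus $\sqrt{2q}$ by \eqref{GaussSum}), split $e^{2\pi i r\varepsilon}-1$ into its monotone cosine and sine parts, and control each via Abel's inequality (Lemma~\ref{DirichletSumLemma}) with the partial-sum bound $\mathcal C\simeq\sqrt q$ from Lemma~\ref{VDCTest2}, giving an error $\lesssim \sqrt q\,q|\varepsilon|\ll\sqrt q$. The only cosmetic difference is that you obtain the upper bound separately by Van der Corput, whereas the paper gets both bounds at once from the perturbation estimate.
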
 
\begin{proof}
Since the object under study is a perturbation of a Gauss sum in the case in which its value is $\simeq \sqrt{q}$ (see \eqref{GaussSum}), it suffices to show that the error introduced by this perturbation is of order smaller than $\sqrt{q}$. Write   
%Recalling the  Gauss summation formula \eqref{GaussSum},it suffices to show that the left hand side of \eqref{GS} is very close (relatively to $\sqrt{q} \gg 1$) to the left hand side of~\eqref{GaussSum}.
% In fact we have 
\begin{align}\label{ItRemains}
&  
\left|    \sum_{ r = 0}^{q - 1} e^{ 2 \pi  i \left( r \left( \frac{p}{q} + \varepsilon \right) - \frac{ r^2}{q} \right)} 
 -
 \sum_{ r = 0}^{ q - 1 } e^{ 2 \pi  i   \frac{ r  p - r^2}{q}}  
 \right|
\\ \nonumber
& 
\quad \quad
=
\left|   \sum_{ r = 0}^{ q - 1  } e^{ 2 \pi  i \frac{ r  p - r^2}{q}} \left( e^{2 \pi i r \varepsilon} - 1 \right)   \right|
\leq I + II,
%\\ \nonumber
%&
%\leq
%\left \sum_{ r_\ell = 1, \ldots, q  }  e^{ 2 \pi  i \left( r_\ell  \frac{p_\ell}{q}  -  r_\ell^2 \frac1q \right)}   
%\left( \left(\cos(2 \pi r_\ell \varepsilon) -1 \right) + i \sin (2 \pi r_\ell \varepsilon)    \right) \right|
%\frac{|\varepsilon|}{|q|}  \sum_{ r_\ell= 1, \ldots, q  } r_\ell \lesssim \frac{|\varepsilon|}{|q|}  q^{2} = |\varepsilon| |q|  \ll 1 ,
\end{align}
where 
\begin{equation}
I = \left| \sum_{ r = 0}^{ q - 1  } e^{ 2 \pi  i \frac{ r  p - r^2}{q}} \left( \cos\left( 2 \pi r \varepsilon\right) - 1 \right) \right|, \quad II = \left| \sum_{ r = 0}^{ q - 1 } 
e^{ 2 \pi  i \frac{ r  p - r^2}{q}}  \sin \left( 2 \pi  r \varepsilon\right)   \right|
\end{equation}
and we have used $e^{2 \pi i r \varepsilon} = \cos \left( 2 \pi  r \varepsilon\right) + i \sin \left( 2 \pi  r \varepsilon\right)$ and 
triangle inequality.  Then, since $|\varepsilon| \, q \ll 1$, we can use the second part of Lemma \ref{DirichletSumLemma} with  $a_r = 1 - \cos \left( 2 \pi  r\varepsilon \right)$, 
$b_r = e^{ 2 \pi  i \frac{r\, p - r^2}{q} } $ and $\mathcal{C} = \sqrt{q}$ (which is allowed by Lemma~\ref{VDCTest2}),
so that
\begin{equation}
I \lesssim \sqrt{q} \, a_{q-1} = \sqrt{q}  \left( 1 - \cos \left( 2 \pi  (q - 1) \varepsilon\right) \right)   \lesssim \sqrt{q} \, q^2 |\varepsilon|^2  \ll \sqrt{q}.
\end{equation}
Similarly, if we choose $a_r = \frac{\varepsilon}{|\varepsilon|} \sin \left( 2 \pi  r \varepsilon\right)$ instead, then
\begin{equation}
II \lesssim \sqrt{q} \, a_{q-1} = \sqrt{q}\,  |\sin \left( 2 \pi  (q - 1) \varepsilon\right)| \lesssim \,  \sqrt{q}  \, q  |\varepsilon| \ll \sqrt{q}. 
\end{equation}   
\end{proof}

\subsection*{Proof of \eqref{abc}(ii)}
Since we already proved \eqref{abc(i)extended}, we only need to show \eqref{abc}(ii) for $1 \leq k \leq j \left( \frac{\alpha}{d+1} + \delta \right)$. 
%Since $\frac{\alpha}{d+1} \in (0, 1)$ and $k, j$ are natural numbers we can not have $k = j \frac{\alpha}{d+1}$, so we can 
%in fact consider $1 \leq k \leq j \left( \frac{\alpha}{d+1} + \delta \right)$. 
In fact, we will prove the stronger
\begin{equation}\label{abc(i)stronger}
| S_{N}(t)  f_k ( x ) |
 \lesssim \lambda^{-ck}, \quad   (x, t) \in (X_t^j, T^j), \quad  1 \leq k \leq j \left( \frac{\alpha}{d+1} + \delta \right),
\end{equation}
for some $c > 0$ and all $N > \lambda^j$. As in the previous case, we work with
\begin{equation}\nonumber
t = \frac{2 \pi}{q}, \quad x = 2\pi \,  \left(\frac{ p}{q} + \varepsilon \right), \qquad q \in \mathbb N, \, p \in \mathbb Z^d, \, \epsilon \in \mathbb R^d
\end{equation}
such that $q \equiv 0 \text{ (mod 4)}$,  $q \simeq \lambda^{j\frac{\alpha}{d+1}}$,  $p_\ell \equiv 0 \text{ (mod 2)}$, $q/4 \leq p_\ell \leq q/2$ and  
\[ |\epsilon_\ell| \leq \frac{1}{100\lambda^j} \]
for all $\ell = 1, \ldots, d$.

Let us first consider the case $k \leq j \left( \frac{\alpha}{d+1} - \delta \right)$.
Since in particular $k \leq j$, the expression \eqref{SolutionWhenKSmallerThanJ} for the solution still holds, and it will be enough to prove  
%Note that since, in particular, $k \leq j$ we have   
%\begin{align}\nonumber
%S_{N}\left(\frac{2 \pi }{q}\right) f_k \left(  \frac{2 \pi p}{q} + \varepsilon \right) 
%& =  
%\lambda^{-k \left( s_{\alpha} + \frac{d}{2} - \delta \right) }
%\prod_{\ell=1}^d \sum_{n_\ell =\lambda^{k-1}}^{\lambda^{k}-1} e^{ 2 \pi i \left( n_\ell  \left( \frac{p_\ell}{q} + \varepsilon_\ell \right)  -  n_\ell^2 \frac1q \right)}  .
%\end{align}
%
%We will show that for such $k$ we have %all $1 \leq k < j \left( \frac{\alpha}{d+1} + \delta \right)$ we have
\begin{equation}\label{Plug4Tris}
\left| \sum_{n_\ell =\lambda^{k-1}}^{\lambda^{k}-1} e^{ 2 \pi i \left( n_\ell  \left( \frac{p_\ell}{q} + \varepsilon_\ell \right)  - \frac{ n_\ell^2}{q} \right)}  \right|
\lesssim 1
\end{equation}
for all $\ell =1, \ldots, d$, because that way we get 
%so that %writing $ = k + h$ with $h \in \mathbb{N} \cup \{ 0 \}$ we have 
\begin{align}\nonumber
 \left| S_{N}\left( \frac{2 \pi }{q} \right)   
 f_k \left( 2\pi \, \left(\frac{ p}{q} + \varepsilon \right) \right) \right|
& \lesssim 
\lambda^{-k \left( s_{\alpha} + \frac{d}{2} - \delta \right)   } =  \lambda^{- c k } %\lesssim 1  .
\end{align}
as claimed. 
Now, \eqref{Plug4Tris} is an immediate consequence of the Van der Corput first derivative test in Lemma \ref{VDCTest1}. 
For that, we need to bound the distance between $\mathbb{Z}$ and the derivative of the phase, which is $p_\ell/q + \varepsilon_\ell - 2n_\ell/q$. Since
\begin{equation}
\frac14 \leq \frac{p_\ell}{q} \leq \frac12, \qquad |\varepsilon_\ell| \leq \frac{1}{100\lambda^j}, \qquad \frac{2n_\ell}{q} \leq \frac{2}{\kappa \lambda^{j\frac{\alpha}{d+1} - k}} < \frac{2}{\kappa}\,  \frac{1}{\lambda^{\delta j}}, 
\end{equation} 
taking $\lambda$ large enough we may have $|\varepsilon_\ell + 2n_\ell/q| \leq 1/8$, which implies that 
\begin{equation}
\operatorname{dist}\left( \frac{p_\ell}{q} + \varepsilon_\ell - \frac{2n_\ell}{q}, \mathbb Z \right) \geq \frac{1}{8}.
\end{equation}
This, in turn, implies \eqref{Plug4Tris} by Lemma~\ref{VDCTest1}. 
%which allows to bound the 
%%sum with the inverse of the distance between the set of the values taken by the derivative of the phase and~$2 \pi \mathbb{Z}$. 
%Since the derivative of the phase is
%\begin{equation}\label{EasyPhase}
%2 \pi \left( \frac{p_\ell}{q} + \varepsilon_\ell  -   \frac{2 n_\ell}{q} \right) \in  2 \pi \left( \frac14 - \rho, \frac12 + \rho \right) 
%\end{equation}
%for some absolute $0 < \rho \ll1$ we get the upper bound $\lesssim 1$. Indeed, \eqref{EasyPhase} immediately follows
%taking $\lambda$ sufficiently large, recalling that $\frac{q}{4} \leq p_\ell \leq \frac{q}{2}$, namely
%$$
%\frac{p_\ell}{q} \in \left[ \frac14, \frac12  \right] 
%$$
%and that 
%$|\varepsilon_\ell| \leq \frac{\lambda^{-j}}{100}$, and finally noting that
%$$0 < \frac{2 n_\ell}{q} < 2 
%\lambda^{k -  j \frac{\alpha}{d+1}} 
%\leq
%2 \lambda^{j \left(\frac{\alpha}{d+1} - \delta  -  \frac{\alpha}{d+1} \right)}
%= 2 \lambda^{- j \delta}
%\ll1 .
%$$

On the other hand, when  $j \left( \frac{\alpha}{d+1} - \delta \right) \leq k \leq j \left( \frac{\alpha}{d+1} + \delta \right)$, 
we use Van der Corput's second derivative test in Lemma~\ref{VDCTest2} to write 
\begin{equation}
\left| \sum_{n_\ell =\lambda^{k-1}}^{\lambda^{k}-1} e^{ 2 \pi i \left( n_\ell  \left( \frac{p_\ell}{q} + \varepsilon_\ell \right) 
 - \frac{ n_\ell^2}{q} \right)}  \right| \lesssim \frac{ \lambda^k - \lambda^{k-1} }{\sqrt{q}}   + \sqrt{q} 
\end{equation}
because the second derivative of the phase in the sum is $-2/q$. Moreover, the above is bounded by
\begin{equation}
\lesssim \frac{\lambda^k}{ \sqrt{q}} + \sqrt{q} \simeq \lambda^{k-j\frac{\alpha}{2(d+1)}} + \lambda^{j\frac{\alpha}{2(d+1)}} \leq 2\, \max\left(\lambda^{k-j\frac{\alpha}{2(d+1)}} , \lambda^{j\frac{\alpha}{2(d+1)}}  \right),
\end{equation}
so
\begin{equation}
\left|  S_N\left( \frac{2\pi}{q}\right) f_k\left( 2\pi \left( \frac{p}{q} + \varepsilon  \right) \right) \right| \lesssim  \lambda^{-k\left( s_\alpha + \frac{d}{2} - \delta \right)}\, \max\left(\lambda^{k-j\frac{\alpha}{2(d+1)}} , \lambda^{j\frac{\alpha}{2(d+1)}}  \right)^d
\end{equation}
In the case the maximum is $\lambda^{k-j\frac{\alpha}{2(d+1)}}$, we bound the above by 
\begin{equation}
\begin{split}
\lambda^{-k\left( s_\alpha + \frac{d}{2} - \delta \right)}\, \lambda^{kd-j\frac{\alpha d}{2(d+1)}} & = \lambda^{-k\left( s_\alpha - \frac{d}{2} - \delta \right)}\,\lambda^{-j\frac{\alpha d}{2(d+1)}} = \lambda^{k\delta }\, \lambda^{-(j-k) \frac{\alpha d}{2(d+1)}} \\
& \leq \lambda^{k\delta} \lambda^{- a\frac{\alpha d}{2(d+1)}k},
\end{split}
\end{equation}
where the last equality holds because
\begin{equation}
k \leq j\left( \frac{\alpha}{d+1} + \delta \right) \quad \Rightarrow \quad (j-k)\left(\frac{\alpha}{d+1}+\delta\right) \geq k\left(1 - \frac{\alpha}{d+1}-\delta\right)
\end{equation}
and 
\[ a =  \frac{1 - \frac{\alpha}{d+1}-\delta}{\frac{\alpha}{d+1} + \delta } >0. \]
Moreover, writing $\tilde{a} = a\frac{\alpha d}{2(d+1)} $ and choosing $\delta$ small 
enough we get $\lambda^{k\delta} \lambda^{- \tilde{a}k} < \lambda^{-k\tilde{a}/2}$.
Otherwise, if the maximum is $\lambda^{j\frac{\alpha}{2(d+1)}}$, using that $j\alpha/(d+1) \leq k + j\delta$, we get
\begin{equation}
\lambda^{-k\left( s_\alpha + \frac{d}{2} - \delta \right)} \, \lambda^{j\frac{\alpha d}{2(d+1)}} \leq \lambda^{-k\left( s_\alpha + \frac{d}{2} - \delta \right)} \, \lambda^{\frac{kd}{2} + \frac{j\delta d }{2}} = \lambda^{-k\left( s_\alpha - \delta \right)} \, \lambda^{ \frac{j\delta d }{2}}.
\end{equation}
Since
\begin{equation}
j\left( \frac{\alpha}{d+1}-\delta \right) \leq k \quad \Rightarrow \quad j \leq b\, k, \qquad \text{ where } \, \,  b = \frac{1}{\frac{\alpha}{d+1}-\delta}>0,
\end{equation}
the above can be bounded by $\lambda^{-k(s_\alpha - \delta - \frac{bd}{2}\delta)} < \lambda^{-k s_\alpha/2}$, if $\delta$ is chosen small enough.

In short, the solution is smaller than either $\lambda^{-k\tilde{a}/2}$ or $\lambda^{-k s_\alpha/2}$, so letting $c = \min(\tilde{a}/2, s_\alpha/2) >0$ we can write  
\begin{equation}
\left|  S_N\left( \frac{2\pi}{q}\right) f_k\left( 2\pi \left( \frac{p}{q} + \varepsilon  \right) \right) \right| \lesssim \lambda^{-ck},
\end{equation}
as we stated in \eqref{abc(i)stronger}.

\subsection*{Proof of \eqref{abc}(iii)}
The points we consider now are
\begin{equation}\nonumber
t = \frac{2 \pi}{q}, \quad x = 2\pi \left( \frac{ p}{q} + \varepsilon \right), \qquad \text{ where } \, \,  
 p \in \mathbb{Z}^d, \, q \in \mathbb{N},\,  \varepsilon \in \mathbb R^d 
\end{equation}
such that $q \equiv 0 \text{ (mod 4)}$,  $q \simeq \lambda^{j\frac{\alpha}{d+1}}$,  $p_\ell \equiv 0 \text{ (mod 2)}$, $q/4 \leq p_\ell \leq q/2$ and 
\[ \frac{1}{200\lambda^j} \leq \varepsilon_\ell \leq \frac{1}{100\lambda^j} \]
for all $\ell = 1, \ldots, d$.

From the definitions of $S_N(t)$ and of $f_k$, we have  $S_N(t) f_k = 0$ whenever $N < \lambda^{k-1}$. Also, $S_N(t) f_k = S_{\lambda^k}(t) f_k$ for every $N \geq \lambda^k$, so we can assume $\lambda^{k-1} \leq N \leq  \lambda^k-1$.
%We can also assume N > \lambda^{k}- 1 
Thus 
\begin{align}\label{Technical}
S_N\left( \frac{2 \pi }{q} \right) f_k \left(  2\pi \left( \frac{ p}{q} + \varepsilon \right) \right) 
& =  
\lambda^{-k \left( s_{\alpha} + \frac{d}{2} - \delta \right) }
\prod_{\ell=1}^d \sum_{n_\ell =\lambda^{k-1}}^{N}  e^{ 2 \pi i \left( n_\ell  \left( \frac{p_\ell}{q} + \varepsilon_\ell \right)  -  \frac{ n_\ell^2}{q}  \right)}  .
\end{align}
We will prove 
%that $N \geq \lambda^{k-1}$ we have
\begin{equation}\label{Plug4Bis}
\left|  \sum_{n_\ell =\lambda^{k-1}}^{N} e^{ 2 \pi i \left( n_\ell  \left( \frac{p_\ell}{q} + \varepsilon_\ell \right)  -  n_\ell^2 \frac1q \right)} \right|
\lesssim  \lambda^{j  \left(1- \frac{\alpha}{2(d+1)} \right) },   \qquad \forall k > j, 
\end{equation}
%OLD
%The reason for which we need to prove the upper bound \eqref{Plug4Bis} for all~$\lambda_k \in [\lambda^{k-1}, \lambda^k -1 ]$ and not just
%(as would be more natural) for $\lambda_k = \lambda^k -1$ is essentially technical. This depends on the fact that the coefficients $\psi_{N}(k)$ appearing in 
%\eqref{Technical} are not equal to one, as before. 
%However, the case $\psi_{N}(k) =1$ is actually the less favorable. Indeed, 
%since $n_\ell \to \psi\left( \frac{n_{\ell}}{N} \right)$ is non increasing and $|\psi(\cdot)|\leq 1$, using~\eqref{Plug4Bis}
%and Lemma~\ref{DirichletSumLemma} with $a_{n_\ell} = \psi\left( \frac{n_{\ell}}{N} \right)$ and $b_{n_\ell} = e^{ 2 \pi i \left( n_\ell  \left( \frac{p_\ell}{q} + \varepsilon_\ell \right)  -  n_\ell^2 \frac1q \right)}$
%we arrive to
so that
\begin{equation}
\begin{split}
\left| S_N \left( \frac{2 \pi }{q} \right)   f_j \left( 2\pi \left( \frac{ p}{q} + \varepsilon \right) \right) \right|
& \lesssim \lambda^{-k \left( s_{\alpha} + \frac{d}{2} - \delta \right)  } \lambda^{j d \left(1- \frac{\alpha}{2(d+1)} \right)}  \\ 
&  = \lambda^{- (k - j) \left( s_{\alpha} + \frac{d}{2} - \delta \right) } \, \lambda^{-j\left( s_{\alpha} + \frac{d}{2} - \delta - d + \frac{\alpha d}{2(d+1)} \right)}  \\
&  = \lambda^{j \delta  - (k-j) \left( s_{\alpha} + \frac{d}{2} - \delta \right)}  \\
&   \leq \lambda^{j \delta  - c(k-j) }.   
\end{split}
\end{equation}
for any $0 < c < s_{\alpha} + \frac{d}{2} - \delta$, as we wanted to prove. To establish \eqref{Plug4Bis}, we proceed as in the proof of \eqref{abc}(i) and we arrive at the analogue of \eqref{Plug3}, 
\begin{equation}\label{Plug3Bis}
\begin{split}
 \sum_{n_\ell =\lambda^{k-1}}^{N} e^{ 2 \pi i \left( n_\ell \left(  \frac{p_\ell}{q} + \varepsilon_\ell \right)  -  \frac{ n_\ell^2 }{q}  \right)  } & = \sum_{m_{\ell} = 0}^{R_k - L_k -1} e^{2 \pi i (L_k + m_\ell ) q \varepsilon_\ell} \sum_{ r_\ell = 0}^{q - 1}e^{ 2 \pi i \left( r_\ell   \left( \frac{p_\ell}{q} + \varepsilon_\ell \right) -  \frac{r_{\ell}^2}{q}  \right)} \\ 
& +  \sum_{ n_\ell \in J_1 \cup J_2} e^{ 2 \pi i \left( n_\ell  \left( \frac{p_\ell}{q} + \varepsilon_\ell \right) -   \frac{ n_\ell^2 }{q} \right) }
= I + II  ,
\end{split}
\end{equation}
%\begin{align}OLD
% \sum_{n_\ell =\lambda^{k-1}}^{\lambda_k} e^{ 2 \pi i \left( n_\ell \left(  \frac{p_\ell}{q} + \varepsilon_\ell \right)  -  n_\ell^2 \frac1q \right)} 
%& =
%  \sum_{m_{\ell} = 0}^{L_k} e^{2 \pi i m_\ell q \varepsilon_\ell}
%\sum_{ r_\ell = 1}^{q}e^{ 2 \pi i \left( r_\ell   \left( \frac{p_\ell}{q} + \varepsilon_\ell \right) -  r_{\ell}^2 \frac1q \right)}
%\\ \nonumber
%&
% +  
%\sum_{ r_\ell= 1 }^{\lambda_{k} - \lambda^{k-1} - L_k q }e^{ 2 \pi i \left( r_\ell  \left( \frac{p_\ell}{q} + \varepsilon_\ell \right) -  r_{\ell}^2 \frac1q \right) } 
%= I + II  ,
% \end{align}
where $L_k$ and $R_k$ are respectively the smallest and largest integers such that 
\begin{equation}\label{DefLk>}
L_k q \geq \lambda^{k-1}, \quad R_k q \leq  N
\end{equation}
and
\begin{equation}\label{DefLk>Bis}
J_1 = \{ \lambda^{k-1}, \ldots, L_k q -1  \}, \quad
J_2 = \{ R_k q,  \ldots, N \},
\end{equation}
which satisfy $|J_1|, |J_2| \leq q$.
% where $L_k$ is the largest integer such that
% \begin{equation}\label{DefLk>}
% L_k q < \lambda_k - \lambda^{k-1}
% \end{equation}

When estimating $|I|$, the phases $2\pi i (L_k + m_\ell ) q \varepsilon_\ell$ are not all coherent any more, so we can exploit more cancellations. It is for this that we need to restrict the range of $\varepsilon$ imposing $\varepsilon_\ell \geq \frac{1}{200 \lambda^j}$. On the other hand, since
$\varepsilon_\ell \leq \frac{1}{100 \lambda^j}$, we can use Lemma~\ref{ItRemainsLemma} as before,    
which gives
\begin{align}\nonumber
 |I|   
 & \simeq \sqrt{q} \, 
  \left| \sum_{m_\ell = 0}^{R_k - L_k - 1}  
e^{2 \pi i m_\ell q \varepsilon_\ell} \right|
 = \sqrt{q}  \,   \left|
 \frac{1 -  e^{  2 \pi i  ( R_k - L_k) q \varepsilon_\ell} }{1 - e^{ 2 \pi i q \varepsilon_\ell} } \right|
  \\ \nonumber
  & \quad \quad =   \sqrt{q} \, \left| \frac{\sin \left(  \pi (R_k - L_k)   q \varepsilon_\ell \right)}{\sin \left(\pi   q \varepsilon_\ell \right)} e^{  \pi i \left( R_k - L_k - 1 \right)   q \varepsilon_\ell } \right|
\end{align}
Thus 
\begin{equation}\label{Plug1Bis}
|I| \lesssim  \frac{\sqrt{q}}{|\sin \left(\pi   q \varepsilon_\ell \right)|} \simeq \frac{\sqrt{q}}{q |\varepsilon_\ell|} \simeq \frac{\lambda^{j}}{\sqrt{q}} 
\simeq \lambda^{j  \left(1- \frac{\alpha}{2(d+1)} \right) }  ,
\end{equation}
where we used $\varepsilon_\ell  \geq \frac{1}{200 \lambda^j}$. 

When estimating $|II|$, since $|J_1|, |J_2| \leq q$, the Van der Corput second 
derivative test in Lemma \ref{VDCTest2} gives 
\begin{equation}\label{Plug2Bis}
|II|  \lesssim \sqrt{q} \simeq \lambda^{j  \frac{\alpha}{2(d+1)}  } \leq  \lambda^{j\left(1- \frac{\alpha}{2(d+1)} \right)}  .
\end{equation}
Plugging \eqref{Plug1Bis} and \eqref{Plug2Bis} into \eqref{Plug3Bis}, we obtain \eqref{Plug4Bis}, which concludes the proof of \eqref{abc}(iii). 

\hfill $\Box$

\section{The dimension of $\Gamma$}\label{SECTION_Measure}

This section is devoted to proving that $\operatorname{dim}_\mathcal{H} \Gamma = \alpha$. Let us first describe the points of $\Gamma$, which was defined in \eqref{Gamma}. For convenience, let us write $\tau = (d+1)/\alpha$, so that we will prove that $\operatorname{dim}_\mathcal{H} \Gamma = (d+1)/\tau$.

That $x \in \Gamma$ means that there exists a sequence $(j_k)_{k \in \mathbb{N}} \subset \mathbb N$ such that $x \in \Gamma^{j_k}$ for every $k \in \mathbb N$. This motivates the following chain of equivalences:
\begin{equation}
\begin{split}
x \in \Gamma & \quad \Longleftrightarrow \quad \text{ there exists }  (j_k)_{k \in \mathbb{N}} \subset \mathbb N  \text{ such that }  x \in \Gamma^{j_k}, \, \forall k \\
& \quad \Longleftrightarrow \quad \text{ there exists }  (j_k)_{k \in \mathbb{N}} \subset \mathbb N \text{ such that for all } k  \text{ there exists } t_{j_k} \in T^{j_k} \\
& \quad \qquad \qquad \text{ such that } x \in X^{j_k}_{t_{j_k}} \\
& \quad \Longleftrightarrow \quad \text{ there exists }  (j_k)_{k \in \mathbb{N}} \subset \mathbb N, \,  \exists (q_{j_k})_{k \in \mathbb{N}} \subset \mathbb{N}, \, \exists (p_{j_k})_{k \in \mathbb{N}} \subset \mathbb{Z}^d \\
& \quad \qquad \qquad \text{ such that }  q_{j_k} \equiv 0 \text{ (mod 4}), \, \,  \kappa \lambda^{j_k/\tau} \leq q_{j_k} \leq \lambda^{j_k/\tau},  \\
 & \quad \qquad \qquad \quad (p_{j_k})_i \equiv 0 \text{ (mod 2)}, \quad \frac{q_{j_k}}{4} \leq (p_{j_k})_i \leq \frac{q_{j_k}}{2}, \\
 & \quad \qquad \qquad \quad \quad \text{ and }  \frac{c_1}{\lambda^{j_k}} \leq x_i - \frac{(p_{j_k})_i}{q_{j_k}} \leq \frac{c_2}{\lambda^{j_k}} \quad \text{ for all } i,  k.
\end{split}
\end{equation}
In the previous section the particular values $ c_1 = 1/200$ and $c_2 = 1/100$ were chosen, but they could in general be any $0 < c_1 < c_2 \leq 1$. 
Since $q_{j_k} \simeq \lambda^{j_k/\tau}$, $x$ can be approximated by a rational with an error of $q_{j_k}^{-\tau}$. Moreover, if the intervals $\{ (\kappa\,\lambda^{j/\tau}, \lambda^{j/\tau}) \}_{j \in \mathbb N}$ cover all the real line, then the above can be rewritten as
\begin{equation}\label{DescriptionOfGamma}
\begin{split}
x \in \Gamma  & \Longleftrightarrow  \exists \text{ infinitely many }  (p,q) \in \mathbb{Z}^d \times \mathbb N \text{ such that }\\
& \qquad \quad q \equiv 0 \text{ (mod 4}), \quad  p_i \equiv 0 \text{ (mod 2)}, \quad \frac{q}{4} \leq p_i \leq \frac{q}{2}  \\
 & \qquad \quad \text{and} \quad  \frac{c_1}{q^\tau} \leq x_i - \frac{p_i}{q} \leq \frac{c_2}{q^\tau}, \quad \forall i = 1, \ldots, d, \\
\end{split}
\end{equation}
where $0<c_1<c_2$ may have changed, but are still fixed constants. To be absolutely correct, $\Gamma$ is contained in the set described on the right hand side with some constants, while the right hand side set is included in $\Gamma$ maybe with different constants. However, since the results we obtain are independent of such constants, we may assume that the equivalence \eqref{DescriptionOfGamma} holds.   As was said above, for that we need
\begin{equation}
\kappa \lambda^{j/\tau} < \lambda^{(j-1)/\tau} \, \Longleftrightarrow \, \lambda^{1/\tau} < \frac{1}{\kappa}.  
\end{equation}
Choose then $\lambda = (\kappa^{-1} - 1)^\tau$, which can be done as large as wished by choosing $0 < \kappa < 1$ as small as needed. 

The description of $\Gamma$ in \eqref{DescriptionOfGamma} already suggests that its dimension must be $(d+1)/\tau$, in view of the well-known Jarn\'ik-Besicovitch theorem \cite{Besicovitch1934,Jarnik1931}.
\begin{theorem}[Jarn\'ik-Besicovitch]\label{Theorem_JarnikBesicovitch}
Let $d >0$, $\tau \geq 1 + 1/d$ and 
\begin{equation}
J = \left\{ \,  x \in [0,1]^d \, : \, \sup_{i = 1, \ldots, d}\left| x_i - \frac{p_i}{q} \right| \leq \frac{1}{q^\tau} \, \text{ for infinitely many } (p,q) \in \mathbb{Z}^d\times\mathbb{N} \, \right\}.
\end{equation}
Then, $\operatorname{dim}_{\mathcal{H}} J = (d+1)/\tau$. 
\end{theorem}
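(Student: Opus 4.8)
The plan is to prove the two bounds $\operatorname{dim}_{\mathcal H}J \le (d+1)/\tau$ and $\operatorname{dim}_{\mathcal H}J \ge (d+1)/\tau$ separately, following the classical argument of Besicovitch and Jarn\'ik. For the \emph{upper bound}, note that for every $Q \in \mathbb{N}$ the set $J$ is contained in $\bigcup_{q \ge Q}\bigcup_{p} C_{p,q}$, where $C_{p,q} = \{x : |x_i - p_i/q| \le q^{-\tau}\ \forall i\}$, and for each $q$ the vector $p$ runs over at most $(3q+1)^d \lesssim q^d$ values (since $x \in [0,1]^d$ and $\tau \ge 1$ force $p_i/q \in [-1,2]$). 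As this is a cover of $J$ by sets of diameter $\lesssim Q^{-\tau} \to 0$, for any $s>0$ one has
\[
\sum_{q \ge Q}\#\{p\}\,(\diam C_{p,q})^{s} \lesssim \sum_{q \ge Q} q^{d-s\tau},
\]
which vanishes as $Q \to \infty$ whenever $s\tau > d+1$; hence $\mathcal H^{s}(J)=0$ for all $s>(d+1)/\tau$ and $\operatorname{dim}_{\mathcal H}J \le (d+1)/\tau$.

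For the \emph{lower bound}, fix $s<(d+1)/\tau$; I would obtain $\operatorname{dim}_{\mathcal H}J \ge s$ from the Mass Transference Principle of Beresnevich and Velani. Up to the harmless comparison of the cubes $C_{p,q}$ with balls, $J = \limsup_{(p,q)} B(p/q,q^{-\tau})$, and in the notation of that principle the $s$-stretched ball associated to $B(p/q,q^{-\tau})$ is $B(p/q,q^{-\tau s/d})$. Since $s<(d+1)/\tau$ we have $\tau s/d < (d+1)/d$, so by Khintchine's theorem in the Groshev form for systems of linear forms (when $\tau s/d \le 1$ this is already Dirichlet's theorem and the $\limsup$ is all of $[0,1]^d$) the set $\limsup_{(p,q)}B(p/q,q^{-\tau s/d})$ has full Lebesgue measure in $[0,1]^d$. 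The Mass Transference Principle then promotes this to $\mathcal H^{s}(J) = \mathcal H^{s}\!\left(\limsup_{(p,q)}B(p/q,q^{-\tau})\right) = \infty$, and in particular $\operatorname{dim}_{\mathcal H}J \ge s$. Letting $s \uparrow (d+1)/\tau$ finishes the proof.

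The upper bound is routine: the entire content lies in the lower bound, i.e.\ in producing enough mass on $J$, and within the scheme above the crucial input is the full-measure (divergence-side Khintchine--Groshev) statement for the stretched $\limsup$ set, which encodes the ``ubiquity'' of the rationals $p/q$. If one prefers to avoid these black boxes, the self-contained alternative is a Cantor-type construction: fix a rapidly increasing sequence of prime denominators $q_1 \ll q_2 \ll \cdots$, at stage $n$ keep inside each surviving cube the subcubes $C(p/q_n,q_n^{-\tau})$ whose centre lies in it, spread unit mass uniformly among them, and check that the resulting measure $\mu$ obeys the Frostman bound $\mu(B(x,r)) \lesssim r^{s}$ for every $s<(d+1)/\tau$, so that $\operatorname{dim}_{\mathcal H}J \ge \operatorname{dim}_{\mathcal H}(\supp\mu) \ge s$ by the mass distribution principle. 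In that approach the step I expect to absorb most of the effort is precisely the verification of the Frostman exponent, namely controlling how the cubes at consecutive scales $q_{n-1}^{-\tau} \gg q_n^{-1} \gg q_n^{-\tau}$ nest and overlap so that the exponent is not degraded below $(d+1)/\tau$.
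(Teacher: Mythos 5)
Your proof is correct, but the lower bound follows a genuinely different route from the paper's. The upper bound is the same covering argument the paper records. For the lower bound you invoke the Beresnevich--Velani Mass Transference Principle: since $s<(d+1)/\tau$ gives $\tau s/d<1+1/d$, the stretched limsup set $\limsup B(p/q,q^{-\tau s/d})$ is in fact all of $[0,1]^d$ by Dirichlet's simultaneous approximation theorem alone, so the full-measure hypothesis holds and the MTP yields $\mathcal H^s(J)=\infty$, hence $\dim_{\mathcal H}J\ge s$ for every $s<(d+1)/\tau$; the cube-versus-ball comparison is indeed harmless. (A minor imprecision: your aside that Dirichlet only covers $\tau s/d\le 1$ undersells it --- the Dirichlet exponent for simultaneous approximation is $1+1/d$, so you never actually need the divergence case of Khintchine--Groshev here.) The paper instead proves the lower bound by hand: Lemma~\ref{LemmaOfCubes} (a Dirichlet-pigeonhole count showing any cube of side $l$ contains $\gtrsim l^d n^{d+1}$ well-separated cubes $B_\infty(p/q,1/q^\tau)$ with $q\simeq n$) is used to build a nested Cantor-type family $E_k$, and Lemma~\ref{LemmaOfDimension} (mass distribution principle) then gives the dimension bound; this is essentially your fallback construction, and the separation/counting step you flag as the hard part is precisely what Lemma~\ref{LemmaOfCubes} supplies. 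What the explicit construction buys --- and the reason the paper does not simply cite Jarn\'ik--Besicovitch or a transference principle --- is flexibility: the set $\Gamma$ of Theorem~\ref{MainThm} imposes parity restrictions on $p$ and $q$, the constraint $p_i\in(q/8,q/4)$, and the two-sided condition $c_1/q^\tau\le x_i-p_i/q\le c_2/q^\tau$, and the hands-on Cantor argument absorbs these modifications directly, whereas your MTP route would require re-verifying the full-measure hypothesis for the correspondingly restricted family of balls. Conversely, your argument is shorter for the classical statement and gives $\mathcal H^s(J)=\infty$ for all $s<(d+1)/\tau$ at the cost of a substantial black box.
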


We cannot use this theorem directly due to the restrictions on the parity of $p$ and $q$, and also due to the additional lower bound for the error of the approximations. However, we will be able to adapt its proof to our setting by working with
\begin{equation}\label{DefinitionG}
\begin{split}
G = \Bigg\{ \, x \in [0,1]^d \, &  : \,  \frac{c_1}{q^\tau} \leq x_i - \frac{p_i}{q} \leq \frac{c_2}{q^\tau} \quad \forall i = 1, \ldots, d \, \text{ for infinitely many }  \\
& \qquad  (p,q) \in \mathbb{N}^d\times \mathbb N \text{ with } p_i \in \left(\frac{q}{8},\frac{q}{4}\right)\, \,  \forall i = 1,\ldots, d   \,  \Bigg\}.
\end{split}
\end{equation}
Let $x \in G/2$ so that there exists $y \in G$ such that $x = y/2$. That means that there exist infinitely many $p,q$  with the restrictions in \eqref{DefinitionG} such that 
\begin{equation}\label{ChoiceOfConstants}
\begin{split}
\frac{c_1}{q^\tau} \leq y_i - \frac{p_i}{q} \leq \frac{c_2}{q^\tau}  \quad & \Longleftrightarrow \quad \frac{c_1}{2q^\tau} \leq x_i - \frac{p_i}{2q} \leq \frac{c_2}{2q^\tau} \\
& \Longleftrightarrow \quad \frac{4^\tau c_1}{2(4q)^\tau} \leq x_i - \frac{2p_i}{4q} \leq \frac{4^\tau c_2}{2(4q)^\tau}
\end{split}
\end{equation}
for all $i=1,\ldots,d$. Let $\tilde{q} = 4q$, $\tilde{p} = 2p$, $\tilde{c}_1 = 4^\tau c_1 /2$ and $\tilde{c}_2 = 4^\tau c_2 /2$  so that 
\begin{equation}
\frac{ \tilde{c}_1}{\tilde{q}^\tau} \leq x_i - \frac{\tilde{p}_i}{\tilde{q}} \leq \frac{\tilde{c}_2}{\tilde{q}^\tau} 
\end{equation}
 such that  $\tilde{q} \in 4\mathbb N$, $\tilde{p}_i \in 2\mathbb N$ and $\tilde{p}_i \in (\tilde{q}/4,\tilde{q}/2)$ for all $i = 1, \ldots, d$. These equivalences together with \eqref{DescriptionOfGamma} and a proper choice of the constants $\tilde{c}_1, \tilde{c}_2$ imply that $G/2 = \Gamma$, and consequently, 
\begin{equation}
 \operatorname{dim}_{\mathcal H} \Gamma = \operatorname{dim}_{\mathcal H} G.
 %, \quad \text{ and } \quad \mathcal{H}^\alpha(\Gamma) > 0 \,  \Leftrightarrow \,  \mathcal{H}^\alpha(G) > 0.
\end{equation}
In what remains of this section, we prove that $\operatorname{dim}_{\mathcal H}G = (d+1)/\tau$. %For that, we will adapt the proof of the Jarn\'ik-Besicovitch Theorem~\ref{Theorem_JarnikBesicovitch}. 
We will be using the supremum norm and distance,
\begin{equation}
\left| x \right|_\infty = \sup_{i=1,\ldots, d} |x_i|, \qquad \dist_\infty(x,y) = |x-y|_\infty, \qquad \forall x,y \in \mathbb{R}^d.
\end{equation}

\subsection{Upper bound}
The upper bound for the dimension of $G$ follows easily from its definition. Indeed, letting
\begin{equation}
A(x,r_1,r_2) =  x + [r_1,r_2]^d, \qquad \forall x \in \mathbb{R}^d, \, 0 < r_1 < r_2,
\end{equation}
be cubes close to $x$ that fit the definition of $G$ in \eqref{DefinitionG}, we see that
\begin{equation}
G \subset \bigcup_{q = Q_0}^\infty \bigcup_{p \in \mathbb{N}^d \cap [\frac{q}{8},\frac{q}{4}]^d } A\left(\frac{p}{q}, \frac{c_1}{q^\tau}, \frac{c_2}{q^\tau}\right), \qquad \forall Q_0 >0.
\end{equation}
All such cubes have diameter 
\[ \frac{ \sqrt{d}\, (c_2-c_1)}{q^\tau} \lesssim \frac{1}{Q_0^\tau},  \] 
which tends to zero when $Q_0$ grows to infinity. Thus, let $s>0$ so that 
\begin{equation}
\begin{split}
\mathcal{H}^s(G) & \leq  \lim_{Q_0 \to \infty}\sum_{q=Q_0}^\infty \sum_{p \in \mathbb{N}^d \cap  [\frac{q}{8},\frac{q}{4}]^d }  \left(d^\frac12 \, \frac{c_2-c_1}{q^\tau} \right)^s \\
&  = \left(d^\frac12\, (c_2-c_1)\right)^s\, \frac{1}{8^d}\,  \lim_{Q_0 \to \infty} \sum_{q=Q_0}^\infty \frac{q^d}{q^{\tau s}} = 0
\end{split}
\end{equation}
as long as $\tau s - d >1$, which is equivalent to $s > (d+1)/\tau$. This implies that $\operatorname{dim}_{\mathcal{H}} G \leq (d+1)/\tau$. 
\begin{remark}
This is the standard argument used to compute the upper bound for the dimension of $J$. Indeed, denoting the closed cube with center at $x\in \mathbb{R}^s$ and radius $r>0 $ by  
\begin{equation}
B_\infty(x,r) = \{ \, y \in \mathbb{R}^d \, : \,  |x- y |_\infty \leq r \,   \}, 
\end{equation}
it is immediate to check that 
\begin{equation}
J \subset \bigcup_{q = Q_0}^\infty \bigcup_{p \in \mathbb{N}^d \cap [0,q]^d } B_\infty\left(\frac{p}{q}, \frac{1}{q^\tau}\right), \qquad \forall Q_0 >0.
\end{equation}
and that,  as a consequence, $\operatorname{dim}_{\mathcal{H}} J \leq (d+1)/\tau$. 
\end{remark}

\subsection{Lower bound}\label{SUBSECTION_LowerBound}

To give the lower bound of $G$ we will build a Cantor-like set inside $G$, whose dimension will be bounded below by the following standard lemma, which is in turn based on the mass distribution principle. It can be found in \cite[Example 4.6]{Falconer2014}, with a generalization to higher dimensions in \cite[Lemma 2.4]{Durand2015}.
\begin{lemma}\label{LemmaOfDimension}
Let $\{ E_k \}_{k \in \mathbb{N}}$ be a sequence of compact subsets of $\mathbb{R}^d$ satisfying the following properties:
\begin{itemize}
	\item The sequence is nested, this is, $E_{k} \subset E_{k-1}$ for every $k \in \mathbb{N}$. 
	%$ E_0 \supset E_1 \supset E_2 \supset \ldots \supset E_k \supset \ldots $
	\item $E_k$ is a union of disjoint compact sets, which we name $k$-level sets.
	\item The distance between any two $k$-level sets is at least $\epsilon_k >0$, and this sequence is decreasing.
	\item Each $(k-1)$-level set contains at least $m_k \geq 2$ $k$-level sets.
\end{itemize}
Then, 
\begin{equation}
\operatorname{dim}_{\mathcal H}\left(  \bigcap_{k \in \mathbb{N}} E_k \right) \geq \liminf_{k \to \infty} \frac{\log(m_1\,m_2\, \ldots\,m_{k-1})}{-\log\left(\epsilon_k\,m_k^{1/d}\right)}.
\end{equation}
\end{lemma}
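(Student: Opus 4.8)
The result is classical — it is \cite[Example~4.6]{Falconer2014} when $d=1$ and \cite[Lemma~2.4]{Durand2015} in general — and the plan is to reproduce the mass distribution argument adapted to the cube geometry at hand. First I would trim the construction so that each $(k-1)$-level set contains \emph{exactly} $m_k$ $k$-level sets, discarding the surplus; this only replaces $\bigcap_k E_k$ by a compact subset, so it suffices to bound the dimension from below for the trimmed tree. On it I would place the natural probability measure $\mu$ that assigns mass $(m_1m_2\cdots m_k)^{-1}$ to each $k$-level set; since the level sets are compact, pairwise disjoint and nested, this defines a Borel probability measure supported on the nonempty compact set $E=\bigcap_k E_k$. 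By the mass distribution principle, proving $\operatorname{dim}_{\mathcal H}E\ge s$ then reduces to the single estimate $\mu(B(x,r))\lesssim r^{s}$ for all $x\in\mathbb R^d$ and all small $r>0$, after which I would let $s$ increase to the $\liminf$ in the statement.

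To prove that estimate I would fix $s$ strictly below the $\liminf$; one may also assume $s<d$, since the bound $\operatorname{dim}_{\mathcal H}E\ge s$ for all $s<d$ already forces $\operatorname{dim}_{\mathcal H}E=d$ whenever the $\liminf$ equals $d$. For all large $k$ one then has $(m_1\cdots m_{k-1})^{-1}\le(\epsilon_k\,m_k^{1/d})^{s}$, which is the only place the combinatorial data enter. Given a small $r>0$, I would choose $k=k(r)$ to be the largest index with $\epsilon_k>2r$ (it exists and tends to infinity as $r\to0$, the sequence being decreasing; the degenerate case where $\epsilon_k$ does not tend to $0$ is analogous). Since two distinct $k$-level sets lie at distance $\ge\epsilon_k>2r=\diam B(x,r)$, the ball $B(x,r)$ meets at most one $k$-level set, so every $(k+1)$-level set it meets is a child of that one set; there are $m_{k+1}$ such children, pairwise at distance at least $\epsilon_{k+1}$, and $\epsilon_{k+1}\le2r$, so a Euclidean packing bound limits the number meeting $B(x,r)$ to at most $C_d(r/\epsilon_{k+1})^{d}$, with $C_d$ depending only on $d$. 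This gives
\begin{equation}
\mu(B(x,r))\;\le\;\frac{\min\bigl(m_{k+1},\,C_d(r/\epsilon_{k+1})^{d}\bigr)}{m_1m_2\cdots m_{k+1}},
\end{equation}
and I would conclude by a two-case split: when $C_d(r/\epsilon_{k+1})^{d}\ge m_{k+1}$ the right-hand side is $(m_1\cdots m_k)^{-1}\le(\epsilon_{k+1}m_{k+1}^{1/d})^{s}\lesssim r^{s}$, while when $C_d(r/\epsilon_{k+1})^{d}<m_{k+1}$ it equals $C_d\,r^{d}\epsilon_{k+1}^{-d}m_{k+1}^{-1}(m_1\cdots m_k)^{-1}\le C_d\,r^{d}\epsilon_{k+1}^{s-d}m_{k+1}^{s/d-1}$, which is $\lesssim r^{s}$ because $(r/\epsilon_{k+1})^{d}<m_{k+1}/C_d$ and $s<d$.

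The hard part will be precisely this last estimate, and in particular the appearance of the exponent $1/d$: it is forced by having to play the hard cap $m_{k+1}$ on the number of children of a level set against the $d$-dimensional packing bound $C_d(r/\epsilon_{k+1})^{d}$ on how many $\epsilon_{k+1}$-separated sets a ball of radius $r$ can meet, and it is the only point where the ambient dimension is used. I would also point out that no control on the diameters of the level sets is needed; only their mutual separation matters for counting how many a ball of radius $r$ can intersect, which is what makes the lemma applicable to the sets $E_k$ built from cubes. Once $\mu(B(x,r))\lesssim r^{s}$ is established for every $s$ below the $\liminf$, the mass distribution principle yields $\operatorname{dim}_{\mathcal H}\bigl(\bigcap_k E_k\bigr)\ge s$ for all such $s$, and letting $s$ tend to the $\liminf$ completes the argument.
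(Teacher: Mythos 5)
Your argument is correct. The paper does not prove this lemma at all -- it simply cites \cite[Example 4.6]{Falconer2014} and \cite[Lemma 2.4]{Durand2015} -- and what you have written is precisely the standard mass distribution argument from those references: trim the tree so each parent has exactly $m_k$ children, distribute mass $(m_1\cdots m_k)^{-1}$ per $k$-level set, pick $k$ maximal with $\epsilon_k>2r$ so that $B(x,r)$ meets a single $k$-level set, and bound the number of its children met by $\min\bigl(m_{k+1},C_d(r/\epsilon_{k+1})^d\bigr)$, which is exactly where the exponent $1/d$ in $\epsilon_k m_k^{1/d}$ comes from; your two-case estimate then closes correctly for every $s$ below the $\liminf$ (with $s<d$). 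One cosmetic remark: the ``degenerate case where $\epsilon_k$ does not tend to $0$'' is vacuous rather than analogous, since the $k$-level sets are pairwise $\epsilon_k$-separated, at least $2^k$ in number, and all contained in the bounded set $E_0$, which forces $\epsilon_k\to0$; this also guarantees that your choice of $k(r)$ always exists.
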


The following lemma allows us to build such a sequence of nested sets fulfilling the reproduction and separation criteria. It corresponds to \cite[Lemma 3.1]{Durand2015}, we reproduce here the proof for completeness. 
\begin{lemma}\label{LemmaOfCubes}
Let $C \subset [0,1]^d$ be a closed cube with side-length $0 < l \leq 1$. Let $n \in \mathbb N$ such that $n \geq 2^{15d}/l^{d+1}$, and also
\begin{equation}
Q_n = \left\{ \, q \in \mathbb N \, : \, 2^{-6d}\, n \leq q \leq n \,  \right\}. 
\end{equation}
Let also $\tau \geq 1 + \frac{1}{d}$. Then, there are at least $2^{-18d^2}\, l^d\, n^{d+1}$ cubes $B_\infty(p/q, 1/q^\tau)$ satisfying 
\begin{itemize}
	\item $q \in Q_n$, 
	\item all are included in $C$, and 
	\item the distance between any two such cubes is at least $n^{-1-1/d}$. 
\end{itemize}
\end{lemma}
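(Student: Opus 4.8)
The plan is to reduce the lemma to a separation estimate for rationals with denominator $\simeq n$, and then to run a pigeonhole argument based on Dirichlet's theorem on simultaneous Diophantine approximation; throughout I write $\delta = n^{-1-1/d}$. First I would remove the containment constraint. Let $C^{\sharp}$ be the cube concentric with $C$ of side $l/2$. Since $\tau \ge 1+1/d$ and $q \ge 2^{-6d}n \ge 1$ for $q \in Q_n$, one has $q^{-\tau} \le q^{-(1+1/d)} \le 2^{6d+6}\delta$; moreover $n \ge 2^{15d}/l^{d+1}$ forces $l\,n^{1+1/d} = (ln)\,n^{1/d} \ge 2^{15d} \ge 2^{6d+9}$, hence $l/4 \ge 2^{6d+7}\delta$. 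Thus every rational $p/q$ with $q \in Q_n$ lying within $|\cdot|_\infty$-distance $2^{6d+1}\delta$ of $C^{\sharp}$ satisfies $B_{\infty}(p/q, q^{-\tau}) \subset C$. So it suffices to produce at least $2^{-18d^2}l^d n^{d+1}$ rationals $p/q$ with $q \in Q_n$, pairwise at distance $\ge \delta$, each within $2^{6d+1}\delta$ of $C^{\sharp}$.

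For this, tile $C^{\sharp}$ by a grid of cubes of side $\delta$; as $l/(2\delta) = l\,n^{1+1/d}/2$ is large, the number of such cubes is $T \ge \tfrac12(l/2)^d\delta^{-d} = \tfrac12(l/2)^d n^{d+1}$. By Dirichlet's theorem (taking $N = \lfloor n^{1/d}\rfloor$), every $x \in \R^d$ admits a pair $(p,q)$ with $1 \le q \le n$ and $|x - p/q|_\infty \le 2q^{-1}n^{-1/d}$. I call a grid cube \emph{good} if such a pair can be chosen with $q \ge 2^{-6d}n$ at its centre $x$ (so that $q \in Q_n$ and $|x - p/q|_\infty \le 2^{6d+1}\delta$), and \emph{bad} otherwise; the centre of a bad cube then lies in $\mathcal{B} := \bigcup_{1 \le q < 2^{-6d}n}\bigcup_{p \in \Z^d} B_{\infty}(p/q, 2q^{-1}n^{-1/d})$. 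Summing the volumes of the balls of $\mathcal B$ that meet $C^{\sharp}$ — for each $q$ there are $\lesssim (ql+1)^d$ relevant centres $p/q$, and each ball has volume $(4q^{-1}n^{-1/d})^d$ — one obtains $\le 8^{-d}l^d$ plus an error term controlled by $n \ge 2^{15d}/l^{d+1}$, whence $|\mathcal B \cap C^{\sharp}| < \tfrac13|C^{\sharp}|$. Because every ball of $\mathcal B$ has radius $\ge 2^{6d+1}\delta \ge \delta$, the number of $\delta$-cubes whose centre lies in a ball of radius $r$ is at most $(3r/\delta)^d$; therefore the number of bad cubes is $\le 3^d\delta^{-d}\sum_q r_q^d \le \tfrac34 T$, and at least $\tfrac14 T$ cubes are good.

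It remains to extract a separated subfamily. Each good cube $B$ carries a rational $\rho(B) = p/q$ with $q \in Q_n$ within $2^{6d+1}\delta$ of its centre, hence inside the block of the $(2\cdot 2^{6d+1}+1)^d$ cubes surrounding $B$. I would colour the grid periodically with $(2^{6d+3})^d$ colours so that two distinct equally-coloured cubes differ by $\ge 2^{6d+3}$ in some coordinate, and keep the colour class containing the most good cubes: it retains $\ge \tfrac14 T/2^{(6d+3)d}$ good cubes whose surrounding blocks are pairwise disjoint and at mutual distance $\ge \delta$. The corresponding rationals $\rho(B)$ are then distinct, pairwise $\ge\delta$-separated, and within $2^{6d+1}\delta$ of $C^{\sharp}$, so their cubes $B_{\infty}(\rho(B), q^{-\tau})$ lie in $C$. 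Their number is at least $\tfrac18(l/2)^d n^{d+1}/2^{(6d+3)d} = l^d n^{d+1}/2^{6d^2+4d+3} \ge 2^{-18d^2}l^d n^{d+1}$, the last step reducing to $6d^2+4d+3 \le 18d^2$, true for all $d \ge 1$.

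I expect the main obstacle to be the volume estimate in the second paragraph together with the bookkeeping of the constants $2^{15d}$, $2^{-6d}$ and $2^{-18d^2}$: the balls comprising $\mathcal B$ have radii only a fixed power of $2$ larger than the cell size $\delta$, so the inequalities involved are essentially tight, and it is precisely the generous hypothesis $n \ge 2^{15d}/l^{d+1}$, together with $\tau \ge 1+1/d$, that makes the error terms — including a logarithmic term in the case $d = 1$ — manageable.
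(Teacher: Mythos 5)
Your overall mechanism is close in spirit to the paper's (which follows Durand): Dirichlet approximation, a volume/counting bound for the set of points approximable with denominator smaller than $2^{-6d}n$, and then extraction of a well-spread family of rationals with $q\in Q_n$. The difference is only in the extraction step (a $\delta$-grid with a periodic colouring instead of a maximal $3(\beta/n)^{1+1/d}$-separated set plus a covering argument), and your bad-set volume estimate, including the logarithmic term at $d=1$, is of the same type as the paper's bound for $\mathcal L^d(C_2)$ and does go through under the hypothesis $n\ge 2^{15d}/l^{d+1}$.

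There is, however, a genuine gap at the separation step. The lemma requires the \emph{cubes} $B_\infty(p/q,1/q^\tau)$ to be at mutual distance at least $\delta=n^{-1-1/d}$, and, as you yourself record, their radii can be as large as $q^{-\tau}\le 2^{6d+6}\delta$ (the case $\tau=1+1/d$, i.e.\ $\alpha=d$, is exactly one the paper needs). Your reduction in the first paragraph asks only that the rationals be pairwise $\delta$-separated, and your colouring with period $2^{6d+3}$ yields centre separation of order $2^{6d+2}\delta$; this is smaller than twice the maximal radius, so the selected cubes may overlap and in particular need not be $\delta$-separated --- the third bullet of the lemma is never actually verified. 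The paper avoids this by demanding that the chosen rationals be pairwise more than $3(\beta/n)^{1+1/d}=3\cdot 2^{6d+6}\delta$ apart, i.e.\ more than three times the maximal cube radius, which is what turns centre separation into cube separation. The natural fix in your scheme is to take the colouring period larger than $2\cdot 2^{6d+6}+2^{6d+2}+1$, say $2^{6d+8}$, but then your final count becomes $l^dn^{d+1}/2^{6d^2+9d+3}$, which at $d=1$ meets the target $2^{-18d^2}l^dn^{d+1}$ only with equality; since your other estimates at $d=1$ are themselves marginal (the bad-cell fraction is already $2(3/8)^d=3/4$ before the error terms, and the tiling count uses $T\ge\frac12(l/2)^d\delta^{-d}$), the constant bookkeeping must be redone after the fix, or the parameters $2^{-6d}$, $2^{15d}$, $2^{-18d^2}$ rebalanced, before the statement as written is established.
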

\begin{proof}
The Dirichlet approximation theorem states that given $n \in \mathbb N$ and for every $x \in \mathbb{R}^d$, there exist $q \in \mathbb{N}$ and $p \in \mathbb Z^d$ such that $1 \leq q \leq n$ and 
\[  \bigg| x - \frac{p}{q} \bigg|_\infty \leq \frac{1}{qn^{1/d}}. \]
 Apply it to every $x \in C$ and call $q(x)$ the minimal $q$ among all possible outcomes of the theorem. By construction, $q(x) \leq n$.

Let $1 < \beta < n$ split the range of $q(x)$ into two; define $Q_n = \{ q \in \mathbb N \, : \, n/\beta \leq q \leq n \}$ so that $q(x)$ is of the order of $n$. Then, taking 
\[ \frac{\beta}{n^{1+1/d}} < \frac{l}{2},\]
 choose a set of rationals $p/q$ such that 
\begin{itemize}
	\item $ q \in Q_n $,
	\item $ \operatorname{dist}_\infty(p/q, \mathbb R^d \setminus C) > (\beta/n)^{1+1/d}$, and
	\item $ |p/q -  p'/q'|_\infty > 3(\beta/n)^{1+1/d}$ for every two points $p/q$, $p'/q'$ in the set,
\end{itemize}
and let $D_n$ be a maximal set among all those, so that there is no set of rationals satisfying the above conditions with a cardinality larger than that of $D_n$. Thus, it is immediate to check the cubes in the set
\[ \left\{  B_\infty\left(\frac{p}{q}, \frac{1}{q^\tau} \right)\, : \, \frac{p}{q} \in D_n  \right\} \] 
satisfy the following properties:
\begin{itemize}
	\item $B_\infty (p/q,1/q^\tau) \subset C$ for every $p/q \in D_n$, 
	\item All such cubes are separated by a distance at least $n^{-1-1/d}$.
\end{itemize}
Observing that the radius of the cubes is $q^{-\tau}$, these properties are the result of combining $q\in Q_n$ yielding 
\[ \frac{1}{q^{\tau}} \leq \left(\frac{\beta}{n} \right)^\tau \leq \left( \frac{\beta}{n} \right)^{1+1/d} \]
 with the other two properties of $ D_n$. Hence, it only remains to compute the cardinality of $D_n$.

The goal will be to see that points $x$ with $q(x) \in Q_n$ can be covered by balls centered in $D_n$ and with radius of the order of $n^{-1-1/d}$. For that, define 
\begin{equation}
C_2 = \{ x \in C \, : \, q(x) < n/\beta \}
\end{equation}
If $x \notin C_2$, then $n/\beta \leq q(x) \leq n$, which means that it can be approximated by 
\[ \left| x - \frac{p}{q(x)} \right|_\infty \leq \frac{\beta}{n^{1+1/d}}. \]
 We want to find some $p'/q' \in D_n$ close enough to $p/q(x)$, but that may not be possible if $x$ is too close to the border of $C$. Hence, let $\alpha >0$ and define
\begin{equation}
C_1 = \left\{ \, x \in C \, : \, \operatorname{dist}_\infty(x,\mathbb R^d \setminus C) > \frac{\alpha}{  n^{1+1/d}}  \,   \right\},
\end{equation}
for which we need $\alpha/n^{1+1/d} < l/2$. 
Then, if $x \in C_1 \setminus C_2$,
\begin{equation}
\operatorname{dist}_\infty\left(\frac{p}{q(x)},\mathbb R^d \setminus C \right) > \frac{\alpha - \beta}{ n^{1+1/d}}, 
\end{equation} 
so we also need $\alpha > \beta$. Moreover, since we want that $p/q(x)$ is in the range of the rationals in $D_n$, we will ask $\alpha - \beta > \beta^{1+1/d}$. That way, $p/q(x)$ is a candidate to be in $D_n$ and two possibilities arise:
\begin{enumerate}
	\item Either $p/q(x) \in D_n$, or
	\item $p/q(x) \notin D_n$, in which case there exists $p'/q' \in D_n$ such that 
	\begin{equation}
	\Big| \frac{p}{q(x)} - \frac{p'}{q'} \Big|_\infty \leq 3 \left( \frac{\beta}{n}\right)^{1+1/d},
	\end{equation} 
	because the contrary would mean that all $p'/q' \in D_n$ are further than $3(\beta/n)^{1+1/d}$ from $p/q(x)$, and then $D_n$ is not maximal because $p/q(x)$ could be in $D_n$ but it is not. 
\end{enumerate} 
In any of the two cases, there exists $p'/q' \in D_n$ closer than $3(\beta/n)^{1+1/d}$  from $p/q(x)$, which by the triangle inequality implies that 
\begin{equation}
\left| x - \frac{p'}{q'} \right|_\infty \leq \left| x - \frac{p}{q(x)} \right|_\infty + \left| \frac{p}{q(x)} - \frac{p'}{q'} \right|_\infty  \leq \frac{\beta+ 3 \beta^{1+1/d}}{n^{1+1/d}} \leq \frac{3\alpha}{n^{1+1/d}}. 
\end{equation} 
In other words, 
\begin{equation}
C_1 \setminus C_2 \subset \bigcup_{p/q \in D_n} B_\infty\left( \frac{p}{q}, \frac{3\alpha}{n^{1+1/d}}  \right),
\end{equation}
so computing the volume we get
\begin{equation}
\mathcal L^d (C_1) - \mathcal L^d(C_2) \leq \# D_n \frac{6^d\alpha^d}{n^{d+1}}.
\end{equation}
Since $C_1$ is a cube of side-length $l - 2\alpha /  n^{1+1/d}$, we get $\mathcal L^d (C_1) = (l-2\alpha /n^{1+1/d})^d$. For an upper bound of the measure of $C_2$, let $x \in C_2$ so that $q(x) < n/\beta$ and 
\begin{equation}
\left| x - \frac{p}{q(x)}  \right|_\infty \leq \frac{1}{qn^{1/d}} < \frac{1}{q}.
\end{equation}
for some $p \in \mathbb Z^d$. This means that $p/q$ cannot be further than $1/q$ from $C$, since the contrary would imply that $x \notin C$. Thus, 
\begin{equation}
C_2 \subset \bigcup_{\substack{q < n/\beta \\ \operatorname{dist}_\infty(p/q,C) < 1/q }} B_\infty\left( \frac{p}{q}, \frac{1}{qn^{1/d}} \right).
\end{equation}
For each fixed $q < n/\beta$, the $\ell^\infty$ distance between successive centers of such balls is $1/q$, so there are at most 
\[ \left( \frac{l}{1/q} + 3 \right)^d = (lq + 3)^d \]
 of them. Hence, 
\begin{equation}
\begin{split}
\mathcal L^d(C_2) & \leq \sum_{q =1}^{ n/\beta} (lq+3)^d\,\frac{2^d}{q^dn} = \frac{2^d}{n}\, \sum_{q =1}^{ n/\beta} \left(l+\frac{3}{q}\right)^d \leq \frac{2^d}{n}\, \left( \sum_{q=1}^{1/l}   \frac{4^d}{q}    + \sum_{q=1/l}^{n/\beta}     4^d l^d \right) \\
&  \leq \frac{8^{d}}{n}\, \left( \frac{1}{l} + \frac{nl^d}{\beta} \right) = 8^{d}\, l^d\, \left( \frac{1}{nl^{d+1}} + \frac{1}{\beta}  \right).
\end{split}
\end{equation}
Consequently, 
\begin{equation}
\#D_n \geq \frac{l^d\, n^{d+1}}{6^d\alpha^d} \left(  \left( 1 - \frac{2\alpha}{l\, n^{1+1/d}} \right)^d - 8^d \left( \frac{1}{n\, l^{d+1}} + \frac{1}{\beta} \right)  \right).
\end{equation}
So that the right hand side is of the order of $l^d\, n^{d+1}$, we need the quantity inside the parentheses to be positive. For that take 
\begin{equation}
\frac{2\alpha}{l\, n^{1+1/d}} < \frac12  \quad \Longleftrightarrow \quad l\, n^{\frac{d+1}{d}} \geq 4\alpha
\end{equation}
and 
\begin{equation}
\frac{1}{n\, l^{d+1}} \leq \frac{1}{2^{4d+2}} \, \text{ and } \, \frac{1}{\beta} \leq \frac{1}{2^{4d+2}} \quad \Longleftrightarrow \quad n\,l^{d+1} \geq 2^{4d+2} \, \text{ and } \, \beta \geq 2^{4d+2},
\end{equation}
so that $\#D_n \geq l^d\, n^{d+1} / (6^d 2^{d+1}\alpha^d)$. Observe now that $nl^{d+1} < ln^{1+1/d}$ because $l \leq 1 < n$, so conditions
\begin{equation}
n\, l^{d+1} \geq \max(2^{4d+2}, 4\alpha), \quad \beta \geq 2^{4d+2}, \quad \alpha > \beta + \beta^{1+1/d}
\end{equation}
suffice, and any choice of $\alpha, \beta$ as above is valid. The numbers in the statement correspond to $\beta = 2^{6d}$ and $\alpha = 2^{13d}$ and the condition $n\, l^{d+1} \geq 2^{15d} \geq 4\alpha$, such that $6^d2^{d+1}\alpha^d \leq 2^{4d+1} 2^{13d^2} \leq 2^{18d^2}$ and $\#D_n \geq 2^{-18d^2}l^dn^{d+1}$.
\end{proof}

Once we have Lemmas~\ref{LemmaOfDimension} and \ref{LemmaOfCubes}, let us build the Cantor-like set inside $G$. 
%For that, consider first the union of all cubes with denominator $q \in \mathbb{N}$, both the centred at $p/q$ and those close to it according to the definition of $G$, and then the union of all such sets corresponding to $q \in Q_n$. In other words, 
Define 
\begin{equation}
H_n = \bigcup_{q \in Q_n} \bigcup_{p \in \mathbb{N}^d \cap  (\frac{q}{8}, \frac{q}{4})^d} B_\infty\left(\frac{p}{q},\frac{1}{q^\tau}\right), \qquad \widetilde{H}_n = \bigcup_{q \in Q_n} \bigcup_{p \in \mathbb{N}^d \cap  (\frac{q}{8}, \frac{q}{4})^d} A\left(\frac{p}{q},\frac{c_1}{q^\tau}, \frac{c_2}{q^\tau}\right).
\end{equation} 
The sets $H_n$ are used in the proof for the classical set $J$, while $\widetilde{H}_n$ are the adaptations for $G$. 
Since 
\[ A\left(\frac{p}{q},\frac{c_1}{q^\tau}, \frac{c_2}{q^\tau}\right) \subset B_\infty\left(\frac{p}{q},\frac{1}{q^\tau}\right), \]
 there is an obvious one-to-one correspondence between all such cubes, indexed by the rationals $p/q$.
 
We build the sets $E_k$ iteratively starting with $E_0 = [1/8,1/4]^d$, which has side-length  $ l =1/8$. According to Lemma~\ref{LemmaOfCubes}, choose $n_1 \geq 2^{15d}8^{d+1} = 2^{18d+3}$ so that there are at least $2^{-18d^2}n_1^{d+1}/8^d$ cubes in $H_{n_1}$ inside $E_0$ and they are separated by at least $n_1^{-1-1/d}$. Inside each of these cubes, there is one and only one cube of $\widetilde{H}_{n_1}$, so we build $E_1$ as the union of all such latter cubes such that
\begin{itemize}
	\item they are all inside $E_0$, 
	\item there are at least $m_1 = 2^{-18d^2}n_1^{d+1}/8^d$ of them, and
	\item since they are inside the cubes of $H_{n_1}$, the separation among them is at least $\epsilon_1 = n_1^{-1-1/d}$. 
\end{itemize}
Since we need $m_1 \geq 2$, we ask that $n_1 \geq 2^{\frac{18d^2+3d+1}{d+1}}$ too, so we set
\begin{equation}
 n_1 \geq \max \left\{ 2^{18d+3}, 2^{\frac{18d^2+3d+1}{d+1}} \right\}.
\end{equation}

Suppose now that $E_{k-1}$ is already built by cubes in $\widetilde{H}_{n_{k-1}}$, all of them disjoint and separated by $\epsilon_{k-1}$. Let $C$ be any of these cubes, which is indexed by some $p/q$ with $q \in Q_{n_{k-1}}$ and which consequently has side-length
\begin{equation}
 \frac{c_2-c_1}{n_{k-1}^\tau}    \leq  l = \frac{c_2 - c_1}{q^\tau}  \leq 2^{6d\tau}\,\frac{c_2-c_1}{n_{k-1}^\tau}.
\end{equation}
We use thus Lemma~\ref{LemmaOfCubes} by taking
\begin{equation}\label{ConditionN1}
n_k \geq \frac{2^{15d}}{ (c_2-c_1)^{d+1}}\, n^{\tau(d+1)}_{k-1}
\end{equation}
so that $n_k  \geq 2^{15d}/l^{d+1}$ is satisfied. Thus, there are at least 
\begin{equation}
2^{-18d^2}\, l^d\, n_k^{d+1} \geq 2^{-18d^2} \, (c_2-c_1)^d \, \frac{n_k^{d+1}}{n_{k-1}^{d \tau}}
\end{equation}
cubes in $H_{n_k}$ that lie inside $C$ and that are separated by at least $\epsilon_k = n_k^{-1-1/d}$. Choose the corresponding same number of cubes of $\widetilde{H}_{n_k}$, which are 
\begin{itemize}
	\item all inside $C$, 
	\item at least $m_k = 2^{-18d^2} \, (c_2-c_1)^d \,  n_k^{d+1}/ n_{k-1}^{d \tau}$, and 
	\item separated by at least $\epsilon_k = n_k^{-1-1/d}$. 
\end{itemize}
Then, build $E_k$ as the union of all such cubes coming from all cubes $C$ that make up $E_{k-1}$. Since we need $m_k \geq 2$, we also ask that
\begin{equation}\label{ConditionN2}
n_k \geq \frac{2^{\frac{18d^2+1}{d+1}}}{(c_2-c_1)^\frac{d}{d+1}}\, n_{k-1}^{\frac{d\tau}{d+1}}. 
\end{equation}

Let finally $F = \bigcap_{k \in \mathbb N} E_k$ and let $(n_k)_{k\in\mathbb{N}}$ be a sequence satisfying \eqref{ConditionN1} and \eqref{ConditionN2}. Also, $m_1 = D\, n_1^{d+1}$ and $m_k = D\, n_k^{d+1}/n_{k-1}^{d\tau}$ for every $k > 1$ and for some constant $D>0$ depending only on $d$, and $\epsilon_k = n_k^{-1-1/d}$, which is a decreasing sequence. Thus, 
\begin{equation}
m_1\, m_2\, \ldots\, m_{k-1} = c\, \widetilde{D}^k\, \frac{n_{k-1}^{d+1}}{(n_{k-2}\, n_{k-3}\, \ldots\, n_2 \, n_1)^{d(\tau - 1 - 1/d)}}
\end{equation}
for some constant $c >0$. Since $n_k$ can be taken as large as wanted or needed, impose the extra condition $n_k \geq n_{k-1}^k$, so that when $k$ is large 
%$\log(n_k)$ grows like $k!$, and 
$n_k$ grows like $e^{k!}$. Thus, 
\begin{equation}
m_1\, m_2\, \ldots\, m_{k-1}  \simeq n_{k-1}^{d+1}, \qquad \text{ when } k \gg 1.
\end{equation}  
On the other hand, $\epsilon_k\, m_k^{1/d} = D\, n_{k-1}^{-\tau}$, so $- \log (\epsilon_k\, m_k^{1/d}) \simeq \log n_{k-1}^\tau$, and thus by Lemma~\ref{LemmaOfDimension},
\begin{equation}
\operatorname{dim}_{\mathcal{H}} F \geq \liminf_{k \to \infty} \frac{\log{n_{k-1}^{d+1}}}{ \log n_{k-1}^\tau} = \frac{d+1}{\tau}. 
\end{equation}
Now, $x \in F$ means that $x \in E_k \subset \widetilde{H}_{n_k}$ for all $k \in \mathbb{N}$. Hence, there exist $q_k \in Q_{n_k}$ and $p_k \in \mathbb{N}^d \cap (q_k/8,q_k/4)^d$ such that $x \in A(p_k/q_k, c_1/q_k^\tau,c_2/q_k^\tau)$ for every $k \in \mathbb{N}$, which implies that $x \in G$. Thus, we conclude that
\begin{equation}
\operatorname{dim}_{\mathcal H} G \geq \operatorname{dim}_{\mathcal H} F \geq \frac{d+1}{\tau}. 
\end{equation}

\begin{remark}
The proof of the Jarn\'ik-Besicovitch Theorem, Theorem~\ref{Theorem_JarnikBesicovitch} follows exactly the same arguments above, save that $E_0=[0,1]^d$ and that when building the $k$-level sets from a $(k-1)$-level cube in $E_{k-1}$, we are left with the cubes of $H_{n_k}$ given by Lemma~\ref{LemmaOfCubes}. The cubes $A(p/q,c_1/q^\tau,c_2/q^\tau)$ and the corresponding sets $\widetilde{H}_{n_k}$ play no role. 
\end{remark}

\subsection{The case $\boldsymbol{\alpha = d}$}
In Remark~\ref{RemarkOnPositiveMeasureOfGamma}, we explained that when $\alpha < d$ the result $\dim{_\mathcal H} \Gamma = \alpha$ immediately self-improves, since we can easily find an alternative $\Gamma'$ such that $\mathcal H^\alpha (\Gamma') = \infty$. However, this reasoning does not work when $\alpha = d$, so we explicitly included that $\mathcal H^d(\Gamma) >0$ in the statement of Theorem~\ref{MainThm}. We show that here. 

The definition of $\Gamma$ in \eqref{Gamma} shows that it is the intersection of the nested sequence of the sets $\left( \bigcup_{j > n}\Gamma^j \right)_{n \in \mathbb{N}}$, so 
\begin{equation}\label{LowerBoundForLebesgueMeasureOfGamma}
\mathcal H^d (\Gamma) = \lim_{n \to \infty} \mathcal H^d\Bigg( \bigcup_{j > n}\Gamma^j \Bigg) \geq \lim_{n \to \infty} \mathcal H^d (\Gamma^n).
\end{equation}
Thus, let us work with the sets $\Gamma^j$ for $j$ as large as needed. Recalling the definitions \eqref{DefT_j}, \eqref{Def:X_j} and \eqref{Def:Gammaj} and ignoring the $2\pi$ factor, 
\begin{equation}
\begin{split}
x \in \Gamma^j \quad \Longleftrightarrow \quad & \text{ there exists } q \in \left[ \kappa \lambda^{j\frac{d}{d+1}}, \lambda^{j\frac{d}{d+1}}  \right], \quad q \equiv 0 \text{ (mod 4)}, \\
& \quad  \text{ and } p \in  [q/4,q/2]^d, \quad  p_i \equiv 0 \text{ (mod 2)}, \, \, i = 1, \ldots, d \\
& \quad \quad  \text{ such that } \quad   \frac{1}{100\lambda^j} \leq x_i - \frac{p_i}{q} \leq \frac{1}{200\lambda^j}, \quad i = 1, \ldots, d.
\end{split}
\end{equation}
Now, proceeding as in the beginning of the section, if we define $G^j$ as 
\begin{equation}
\begin{split}
x \in G^j \quad \Longleftrightarrow \quad & \text{ there exists } q \in \left[ \frac{\kappa}{4}\,  \lambda^{j\frac{d}{d+1}}, \frac14\, \lambda^{j\frac{d}{d+1}}  \right]  \text{ and } p \in  \left[ \frac{q}{8}, \frac{q}{4} \right]^d \\ 
& \text{ such that }  \quad \frac{c_1}{q^\tau} \leq x_i - \frac{p_i}{q} \leq \frac{c_2}{q^\tau}, \quad i = 1, \ldots, d,
\end{split}
\end{equation}
then $ G^j/2 \subset \Gamma^j$ for a proper choice of the constants $0 < c_1 < c_2$ as in \eqref{ChoiceOfConstants}. 
We remind that in this case $\tau = (d+1)/d$.
Similar to $\widetilde{H}_n$, $G^j$ is nothing but
\begin{equation}
G^j = \bigcup_{q \in \left[ \frac{\kappa}{4}\,  \lambda^{j\frac{d}{d+1}}, \frac14\, \lambda^{j\frac{d}{d+1}}  \right]}\, \bigcup_{ p \in  [\frac{q}{8},\frac{q}{4}]^d } A\left( \frac{p}{q}, \frac{c_1}{q^\tau}, \frac{c_2}{q^\tau} \right).
\end{equation}
In particular, these cubes correspond to those that are in $[1/8,1/4]^d$. 

Now, we use Lemma~\ref{LemmaOfCubes} with $C = [1/8,1/4]^d$, $l = 1/8$ and $n = \lambda^{j\frac{d}{d+1}}/4$. For that, we have to ask first 
\begin{equation}\label{ConditionForJ}
\frac14 \, \lambda^{j\frac{d}{d+1}} > 2^{15d}\,4^{d+1}  \Longleftrightarrow j > \frac{17d^2 + 21d + 4}{d}\, \frac{\ln 2}{\ln \lambda}.
\end{equation}
Then, there are at least $2^{-18d^2}\, 8^{-d} \, \lambda^{jd}$ cubes $B_\infty(p/q,1/q^\tau)$ inside $C$ that satisfy $2^{-6d}\,\lambda^{j\frac{d}{d+1}}/4 \leq q \leq \lambda^{j\frac{d}{d+1}}/4 $ and that are disjoint. Taking $\kappa \leq 2^{-6d}$, they satisfy 
\begin{equation}
\frac{\kappa}{4}\,\lambda^{j\frac{d}{d+1}} \leq q \leq \frac{1}{4}\, \lambda^{j\frac{d}{d+1}}.
\end{equation}
Now, at least the same number of cubes $A(p/q, c_1/q^\tau, c_2/q^\tau)$ in $[1/8,1/4]^d$ are disjoint, which means that at least $2^{-18d^2-3d}\,\lambda^{jd}$ of the cubes that form $G^j$ are disjoint. Hence, 
\begin{equation}
\mathcal H^d(G^j) \geq 2^{-18d^2-3d}\,\lambda^{jd} \, \left(\frac{c_2-c_1}{q^\tau}\right)^d  \simeq (c_2-c_1)^d\, 2^{-18d^2-3d} > 0, 
\end{equation}
which in turn implies 
\begin{equation}
\mathcal{H}^d (\Gamma^j) \geq \frac{1}{2^d}\, \mathcal{H}^d(G^j) \geq (c_2-c_1)^d\, 2^{-18d^2-4d} > 0. 
\end{equation}
This holds for every $j$ as in \eqref{ConditionForJ}, which together with \eqref{LowerBoundForLebesgueMeasureOfGamma} implies 
\begin{equation}
\mathcal H^d (\Gamma)  \geq  (c_2-c_1)^d\,  2^{-18d^2-4d} > 0,
\end{equation} 
and the proof is complete.

\section{Proof of Theorem \ref{Thm2}}\label{SectionThm2}

%The lower bound for $\alpha_{\T^d}(s)$ in \eqref{FractalResult}
%follows
%by a standard density argument combined with Frostman's lemma (see for example \cite[Appendix B]{BarceloBennettCarberyRogers2011}), once we
%have proved the maximal inequality~\eqref{AMIThm}. 
%%\begin{equation}\label{AMIThmPrelimFinal}
%%\left\| \sup_{0 < t < 1} \sup_{N> 1} | S_{N}(t)f | \right\|_{L^{\frac{2(d+2)}{d}}(d \mu)} \lesssim
%%c_{\alpha}(\mu)^{\frac{d}{{2(d+2)}}}  \| f \|_{H^{s}(\T^d)}  ,
%%\end{equation}
%%for all $\alpha$-dimensional measures $\mu$, provided
%%$$
%%s > \frac{d}{2(d+2)} (d + 2 - \alpha).
%%$$
%%This inequality is superficially stronger than the maximal inequality~\eqref{AMIThm} (In the proof of Proposition~\ref{PropTransference}
%%we show that it is in fact equivalent).
%Thus the lower bound for $\alpha_{\T^d}(s)$ in \eqref{FractalResult} and~\eqref{AMIThm} are both consequences of the following propositions
%(see Remark~\ref{RemarkFinal}).

The main conclusion of Theorem~\ref{Thm2}, namely the $\alpha$-almost everywhere convergence property for $s > \frac{d}{2(d+2)} (d + 2 - \alpha)$, follows from the maximal estimate \eqref{MaximalEstimate} by a standard procedure using density arguments and Frostman's lemma. We refer the reader to \cite[Appendix B]{BarceloBennettCarberyRogers2011} for the details. Therefore, we focus on proving the maximal estimate. 

 Let us first recall that estimates of the same nature are known for the case when $\mu$ is the Lebesgue measure, as was proved in \cite{MoyuaVega2008} when $d=1$ and in \cite{WangZhang2019} when $d\geq2$, using Strichartz estimates. In \cite{CompaanLucaStaffilani2020}, based on the optimal periodic Strichartz estimates from \cite{BourgainDemeter2015}, the $L^p$ exponent was improved when $d \geq 3$. We gather all these results in the following proposition. 
\begin{proposition}[\cite{MoyuaVega2008, WangZhang2019, CompaanLucaStaffilani2020}]\label{StrichME}
Let $s > \frac{d}{d+2}$. Then
\begin{equation}\label{MaxInTorus}
\left\| \sup_{0 \leq t \leq 1} |e^{it\Delta } f | \right\|_{L^{\frac{2(d+2)}{d}}(\mathbb{T}^d)} \lesssim \| f \|_{H^{s}(\mathbb{T}^d)} \, .
\end{equation}\end{proposition}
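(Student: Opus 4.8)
The plan is to deduce \eqref{MaxInTorus} from the periodic Strichartz estimates together with an elementary Sobolev embedding in the time variable, so that essentially all the analytic difficulty is absorbed into the Strichartz inequality. Recall the sharp periodic Strichartz estimate at the exponent $p = \frac{2(d+2)}{d}$: for every trigonometric polynomial $g$ with frequencies in $\{k \in \mathbb Z^d : |k|_\infty \simeq N\}$,
\begin{equation}\label{StrichartzInput}
\| e^{it\Delta} g \|_{L^{p}(\mathbb T^d \times [0,1])} \lesssim_\varepsilon N^\varepsilon\, \| g \|_{L^2(\mathbb T^d)},
\end{equation}
which is due to Bourgain \cite{Bourgain1993} when $d = 1, 2$ and to Bourgain and Demeter \cite{BourgainDemeter2015} when $d \geq 3$; note that this $p$ is exactly the $L^p$ exponent appearing in \eqref{MaxInTorus}. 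First I would run a Littlewood--Paley decomposition $f = \sum_{N \geq 1} P_N f$ over dyadic $N$, where $P_N$ projects onto frequencies $|k|_\infty \simeq N$ (with $N=1$ collecting the low frequencies). Since $\mathbb Z^d \cap \{|k|_\infty \simeq N\}$ is finite, each $P_N f$ is a trigonometric polynomial, so $u_N := e^{it\Delta} P_N f$ is smooth in $(t,x)$ and $\sup_{0\leq t\leq 1}|u_N|$ is an honest maximum, with no measurability issue.

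For fixed $x$, the one-variable inequality $\|h\|_{L^\infty([0,1])} \lesssim \|h\|_{L^p([0,1])} + \|h\|_{L^p([0,1])}^{1-1/p}\|h'\|_{L^p([0,1])}^{1/p}$, obtained by applying the fundamental theorem of calculus to $|h|^p$ and then Hölder, gives --- after taking $L^p_x = L^p(\mathbb T^d)$ norms and applying Hölder in $x$ with exponents $\frac{p}{p-1}$ and $p$, and writing $L^p_{t,x} = L^p(\mathbb T^d\times[0,1])$ ---
\begin{equation}\label{SobolevInTime}
\Big\| \sup_{0\leq t\leq 1} |u_N| \Big\|_{L^p_x} \lesssim \| u_N \|_{L^p_{t,x}} + \| u_N \|_{L^p_{t,x}}^{1-\frac1p}\, \| \partial_t u_N \|_{L^p_{t,x}}^{\frac1p}.
\end{equation}
Now $\partial_t u_N = i\, e^{it\Delta}(\Delta P_N f)$, and $\Delta P_N f$ is again a trigonometric polynomial with frequencies $|k|_\infty \simeq N$ satisfying $\|\Delta P_N f\|_{L^2} \lesssim N^2 \|P_N f\|_{L^2}$. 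Applying \eqref{StrichartzInput} to both $P_N f$ and $\Delta P_N f$ and inserting the resulting bounds into \eqref{SobolevInTime} yields, using $2/p = d/(d+2)$,
\begin{equation}\label{PerPieceMaximal}
\Big\| \sup_{0\leq t\leq 1} |u_N| \Big\|_{L^p_x} \lesssim_\varepsilon N^{\frac{2}{p}+\varepsilon}\, \| P_N f \|_{L^2(\mathbb T^d)} = N^{\frac{d}{d+2}+\varepsilon}\, \| P_N f \|_{L^2(\mathbb T^d)}.
\end{equation}

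Finally I would sum \eqref{PerPieceMaximal} over dyadic $N$. Given $s > \frac{d}{d+2}$, fix $\varepsilon>0$ small enough that $\frac{d}{d+2}+\varepsilon < s$; then $N^{\frac{d}{d+2}+\varepsilon}\|P_N f\|_{L^2} = N^{\frac{d}{d+2}+\varepsilon-s}\big(N^s\|P_N f\|_{L^2}\big) \lesssim N^{\frac{d}{d+2}+\varepsilon-s}\,\|P_N f\|_{H^s}$, so by the triangle inequality in $L^p_x$ and then Cauchy--Schwarz in $N$,
\begin{equation}
\Big\| \sup_{0\leq t\leq 1} |e^{it\Delta} f| \Big\|_{L^p(\mathbb T^d)} \leq \sum_N \Big\| \sup_{0\leq t\leq 1} |u_N| \Big\|_{L^p_x} \lesssim \Big( \sum_N N^{2(\frac{d}{d+2}+\varepsilon-s)} \Big)^{\frac12} \Big( \sum_N \| P_N f \|_{H^s}^2 \Big)^{\frac12} \lesssim \| f \|_{H^s(\mathbb T^d)},
\end{equation}
the geometric series converging because $\frac{d}{d+2}+\varepsilon-s<0$; a standard limiting argument then identifies $\sup_{t}|e^{it\Delta}f|$ for general $f \in H^s$ with the limit of the partial sums, so that the estimate extends by density. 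I do not expect a genuine obstacle here: all the weight is carried by the periodic Strichartz estimate \eqref{StrichartzInput}, whose proof for $d \geq 3$ requires the full strength of the $\ell^2$-decoupling theorem, while the reduction through \eqref{SobolevInTime} is elementary; the only thing that needs care is the $\varepsilon$-versus-$s$ budget in the summation, which closes precisely because the Strichartz exponent is $\frac{2(d+2)}{d}$, making the loss $N^{2/p}=N^{d/(d+2)}$ match the regularity threshold claimed in \eqref{MaxInTorus}.
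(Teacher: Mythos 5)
Your proof is correct and follows essentially the same route as the sources the paper relies on: Proposition~\ref{StrichME} is not proved in the paper but quoted from \cite{MoyuaVega2008, WangZhang2019, CompaanLucaStaffilani2020}, and those works obtain it exactly as you do, by combining the periodic $L^{\frac{2(d+2)}{d}}$ Strichartz estimate with $N^\varepsilon$ loss (Bourgain, Bourgain--Demeter) with a dyadic frequency decomposition and the elementary Sobolev-type embedding in the time variable, which costs the factor $N^{2/p}=N^{d/(d+2)}$ and thus matches the threshold $s>\frac{d}{d+2}$. Hence there is nothing to fix; your argument is the standard one behind the citation.
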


We show in the following proposition that the above estimates for the Lebesgue measure can be used to deduce similar estimates for a general $\alpha$-dimensional measure.
\begin{proposition}\label{PropTransference}
Let $d\ge 1$, $p \geq 1$ and $s_0 \geq 0$. If
\begin{equation}\label{what}
\left\| \sup_{0 < t < 1} |e^{it\Delta}f | \right\|_{L^{p}(\mathbb T^d)} \lesssim
 \| f \|_{H^{s}(\T^d)}, \quad \forall s > s_0
\end{equation}
then
%\begin{equation}OLD
%\left\| \sup_{0 < t < 1} \sup_{N> 1} | S_{N}(t)f | \right\|_{L^{p}(d \mu)} \lesssim
%c_{\alpha}(\mu)^{\frac{1}{p}}  \| f \|_{H^{s}(\T^d)}, \quad \forall s > \frac{d-\alpha}{p}  + s_{0}  
%\end{equation} 
\begin{equation}\label{AMIThmPrelim}
\left\| \sup_{0 < t < 1} | e^{it \Delta} f | \right\|_{L^{p}(\mathbb T^d, d \mu)} \lesssim
c_{\alpha}(\mu)^{\frac{1}{p}}  \| f \|_{H^{s}(\T^d)}, \quad \forall s > \frac{d-\alpha}{p}  + s_{0}  
\end{equation} 
for every $\alpha$-dimensional measure
$\mu$. \end{proposition}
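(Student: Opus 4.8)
The plan is to deduce the $\alpha$-dimensional estimate from \eqref{what} by a Frostman-type transference, the only genuinely delicate point being the interplay between the time supremum and frequency localization. Fix, for each dyadic $N\ge1$, a reproducing kernel $\Phi_N$ on $\mathbb T^d$ with $\widehat{\Phi_N}(k)=1$ for $|k|_\infty\le N$, $\widehat{\Phi_N}$ supported in $\{|k|_\infty\le 2N\}$, $\|\Phi_N\|_{L^1(\mathbb T^d)}\lesssim1$, and the standard pointwise bound $|\Phi_N(x)|\lesssim N^d(1+N|x|)^{-(d+1)}$ for $|x|\le\pi$ (for instance the periodization of $x\mapsto N^d\psi(Nx)$ for a fixed Schwartz function $\psi$ on $\mathbb R^d$ with $\widehat\psi\equiv1$ on $[-1,1]^d$ and $\supp\widehat\psi\subset[-2,2]^d$).

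\textbf{Step 1.} First I would establish that if $g\in L^p(\mathbb T^d)$ has $\widehat g$ supported in $\{|k|_\infty\le N\}$ and $\mu$ is $\alpha$-dimensional, then
\[
\int_{\mathbb T^d}|g|^p\,d\mu\ \lesssim\ c_\alpha(\mu)\,N^{d-\alpha}\int_{\mathbb T^d}|g|^p\,dx .
\]
Indeed $g=\Phi_N*g$, so $|g(x)|\le\int|\Phi_N(x-y)||g(y)|\,dy$, and since $\|\Phi_N\|_{L^1}\lesssim1$, Jensen's inequality (this is where $p\ge1$ enters) gives $|g(x)|^p\lesssim\int|\Phi_N(x-y)||g(y)|^p\,dy$; integrating in $d\mu(x)$ and using Fubini yields $\int|g|^p\,d\mu\lesssim\||\Phi_N|*\mu\|_{L^\infty}\int|g|^p\,dx$. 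To bound the kernel–measure convolution, split $\int|\Phi_N(y-x)|\,d\mu(x)$ into the contribution of $B(y,1/N)$ and of the dyadic shells $\{2^{j-1}/N\le|y-x|<2^j/N\}$, $j\ge1$; using $\mu(B(y,r))\le c_\alpha(\mu)\,r^\alpha$ together with $|\Phi_N|\lesssim N^d$ on the central ball and $|\Phi_N|\lesssim N^d2^{-j(d+1)}$ on the $j$-th shell, one obtains $\||\Phi_N|*\mu\|_{L^\infty}\lesssim c_\alpha(\mu)N^{d-\alpha}\sum_{j\ge0}2^{j(\alpha-d-1)}\lesssim c_\alpha(\mu)N^{d-\alpha}$, the series converging because $\alpha\le d<d+1$.

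\textbf{Step 2.} The main obstacle is that $G_N(x):=\sup_{0<t<1}|e^{it\Delta}f_N(x)|$, where $f_N$ denotes the Littlewood--Paley block of $f$ at frequency $\sim N$ (with $N$ dyadic and $N=1$ collecting the low frequencies), is \emph{not} frequency-localized, so Step 1 does not apply to it directly. However, $e^{it\Delta}f_N$ is a trigonometric polynomial, jointly continuous in $(x,t)$; hence for each $x$ the supremum is attained at some $t(x)\in[0,1]$, and since $e^{it(x)\Delta}f_N$ has Fourier support in $\{|k|_\infty\le N\}$ it equals $\Phi_N*(e^{it(x)\Delta}f_N)$, so that
\[
G_N(x)=\Bigl|\int\Phi_N(x-y)\,e^{it(x)\Delta}f_N(y)\,dy\Bigr|\le\int|\Phi_N(x-y)|\,G_N(y)\,dy .
\]
This is precisely the pointwise domination used in Step 1, so the same argument gives $\int G_N^p\,d\mu\lesssim c_\alpha(\mu)N^{d-\alpha}\int G_N^p\,dx$; applying the hypothesis \eqref{what} to $f_N$ we get, for every $\sigma>s_0$,
\[
\|G_N\|_{L^p(d\mu)}\ \lesssim\ c_\alpha(\mu)^{1/p}\,N^{(d-\alpha)/p}\,\|G_N\|_{L^p(dx)}\ \lesssim\ c_\alpha(\mu)^{1/p}\,N^{(d-\alpha)/p}\,\|f_N\|_{H^\sigma} .
\]

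\textbf{Step 3.} Finally, decomposing $f=\sum_N f_N$ and using the triangle inequality in $L^p(d\mu)$ (valid since $p\ge1$), the bound of Step 2, and $\|f_N\|_{H^\sigma}\lesssim N^{\sigma-s}\|f_N\|_{H^s}$,
\[
\Bigl\|\sup_{0<t<1}|e^{it\Delta}f|\Bigr\|_{L^p(d\mu)}\le\sum_N\|G_N\|_{L^p(d\mu)}\lesssim c_\alpha(\mu)^{1/p}\sum_N N^{\frac{d-\alpha}{p}+\sigma-s}\|f_N\|_{H^s} .
\]
Cauchy--Schwarz in the dyadic index and $\sum_N\|f_N\|_{H^s}^2\simeq\|f\|_{H^s}^2$ then bound the right-hand side by $c_\alpha(\mu)^{1/p}\bigl(\sum_N N^{2(\frac{d-\alpha}{p}+\sigma-s)}\bigr)^{1/2}\|f\|_{H^s}$, and the geometric series converges as soon as $s>\frac{d-\alpha}{p}+\sigma$. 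Since $\sigma>s_0$ was arbitrary, this proves \eqref{AMIThmPrelim} for every $s>\frac{d-\alpha}{p}+s_0$. The only real difficulty is the observation in Step 2 that the maximizing time may be frozen pointwise, which is what lets the reproducing-kernel machinery reach the maximal operator; the rest is routine, and one could alternatively cite the analogous transference in \cite{BarceloBennettCarberyRogers2011}.
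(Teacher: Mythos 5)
Your proposal is correct and follows essentially the same transference scheme as the paper: dyadic (Littlewood--Paley) reduction, the self-reproducing convolution identity for frequency-localized evolutions, pointwise domination of the maximal function by the kernel convolved with the maximal function, Fubini against $\mu$, the bound $\left\| |K_N| * \mu \right\|_{L^\infty} \lesssim c_\alpha(\mu)\, N^{d-\alpha}$ (up to logarithms), and finally the Lebesgue-measure hypothesis applied to each block. The only differences are technical: you use a smooth periodized Schwartz reproducing kernel together with Jensen's inequality, which avoids logarithmic factors altogether, whereas the paper uses the Dirichlet kernel $D_N$ with H\"older's inequality and absorbs the resulting $(\ln N)^d$ losses into $N^{\varepsilon}$; also, your attainment-of-the-supremum argument in Step 2 is unnecessary, since for each fixed $t$ one may bound $|e^{it\Delta}f_N(y)|$ by the maximal function inside the convolution and take the supremum in $t$ afterwards, exactly as the paper does.
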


%\begin{remark}
%When we replace $\T^$ by $\R^d$ we have the same proposition, so we can promote also a local estimate w.r.t. the Lebesgue measure and $s > s_0$  
%to a local one 
%when $s > \frac{d-\alpha}{p} + 2s_{0} - d(\frac{1}{2}-\frac{1}{p})$. This is because of the Keith Lemma.
%\end{remark}

%\begin{remark}\label{RemarkFinal}
%Letting $p = \frac{2(d+2)}{d}$ and $s_0 = \frac{d}{d+2}$ in Proposition \ref{PropTransference} and using Proposition \ref{StrichME} we 
%prove \eqref{AMIThm}.
%\end{remark}

Once we prove this fact, since Proposition~\ref{StrichME} shows that \eqref{what} holds with $p = 2(d+2)/d$ and $s_0=d/(d+2)$, we get 
\begin{equation}
\left\| \sup_{0 < t < 1} | e^{it \Delta} f | \right\|_{L^{\frac{2(d+2)}{d}}(d \mu)} \lesssim
c_{\alpha}(\mu)^{\frac{d}{2(d+2)}} \,  \| f \|_{H^{s}(\T^d)},  
\end{equation}
for every $\alpha$-dimensional measure $\mu$ and whenever
\begin{equation}
s > \frac{d(d-\alpha)}{2(d+2)}  + \frac{d}{d+2} = \frac{d}{2(d+2)}\left( d+2-\alpha \right),
\end{equation}
which is precisely the statement of Theorem~\ref{Thm2}. Hence let us prove Proposition~\ref{PropTransference}. For that, we will use several properties of the Dirichlet kernel \eqref{DirichletKernel} that we give in the following two lemmas.
\begin{lemma}\label{LemmaDirichlet1D}
Let $N \in \mathbb N$. The Dirichlet kernel in one dimension, $d_N$, is symmetric with respect to $\pi$, that is,  $d_N(2\pi - x) = d_N(x)$ for every $0\leq x \leq \pi$, and
\begin{equation}\label{DirichletKErnelBound}
|d_N(x)| \lesssim  \left\{ 
\begin{array}{lll}
N & \mbox{when} & 0 \leq x \leq 1/N \\
1/ |x |  & \mbox{when} & 1/N < x \leq \pi,
\end{array}
\right. 
\end{equation}
or equivalently, $|d_N(x)| \lesssim  \min\left( N, 1/x \right)$. 
\end{lemma}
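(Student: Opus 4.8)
The plan is to reduce everything to the classical closed form of the one-dimensional Dirichlet kernel. First I would record that $d_N$ is real-valued, since $d_N(x)=\sum_{|k|\le N}e^{ikx}=1+2\sum_{k=1}^{N}\cos(kx)$, and that multiplying the geometric sum by $e^{ix/2}-e^{-ix/2}$ and telescoping yields, for $x\notin 2\pi\mathbb{Z}$,
\[
d_N(x)=\frac{\sin\bigl((N+\tfrac12)x\bigr)}{\sin(x/2)},
\]
with $d_N(x)=2N+1$ when $x\in 2\pi\mathbb{Z}$.

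For the symmetry with respect to $\pi$, I would argue directly from the definition: the index set $\{\,k\in\mathbb{Z}:|k|\le N\,\}$ is invariant under $k\mapsto -k$, so $d_N(2\pi-x)=\sum_{|k|\le N}e^{ik(2\pi-x)}=\sum_{|k|\le N}e^{-ikx}=d_N(x)$ for every $0\le x\le\pi$. (Alternatively this can be checked from the closed form using that $\sin\bigl((N+\tfrac12)(2\pi-x)\bigr)=\sin\bigl((N+\tfrac12)x\bigr)$, because $(2N+1)\pi$ is an odd multiple of $\pi$, together with $\sin\bigl((2\pi-x)/2\bigr)=\sin(\pi-x/2)=\sin(x/2)$.)

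For the pointwise estimate I would split into the two ranges. When $0\le x\le 1/N$ the trivial bound $|d_N(x)|\le\sum_{|k|\le N}1=2N+1\lesssim N$ already gives the first line. When $1/N<x\le\pi$, I would use the closed form with $|\sin((N+\tfrac12)x)|\le 1$ to get $|d_N(x)|\le 1/\sin(x/2)$, and then apply Jordan's inequality $\sin\theta\ge\frac{2}{\pi}\theta$ on $[0,\pi/2]$ with $\theta=x/2\in(0,\pi/2]$, obtaining $|d_N(x)|\le\pi/x\lesssim 1/|x|$, which is the second line. Finally, combining the two cases gives the equivalent form $|d_N(x)|\lesssim\min(N,1/x)$, since $\min(N,1/x)=N$ on $0\le x\le 1/N$ and $\min(N,1/x)=1/x$ on $1/N<x\le\pi$.

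There is no genuine obstacle here: the statement is classical. The only points that require (minor) care are the derivation of the closed form, where one must separate out the exceptional values $x\in 2\pi\mathbb{Z}$ at which the denominator vanishes, and the elementary lower bound $\sin(x/2)\gtrsim x$ on $(0,\pi]$ supplied by Jordan's inequality; both are entirely routine.
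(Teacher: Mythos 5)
Your proof is correct and follows essentially the same route as the paper, which also deduces both the symmetry and the bound $|d_N(x)|\lesssim\min(N,1/x)$ from the closed formula $d_N(x)=\sin((N+\tfrac12)x)/\sin(x/2)$; you simply spell out the details (trivial bound for $x\le 1/N$, Jordan's inequality for $x>1/N$) that the paper leaves as immediate.
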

\begin{proof}
Both properties follow immediately from the well-known formula
\begin{equation}\label{DirichletKernel1D}
d_N(x)  =   \sum_{|k| \leq N} e^{i k x} = \frac{\sin \left( \left( N + 1/2 \right) x\right)}{\sin \left( x/2 \right)}, \qquad x \in \mathbb T.
\end{equation}
\end{proof}

\begin{lemma}\label{LemmaConvolutionWithMu}
Let $N \in \mathbb N$ and $\mu$ be an $\alpha$-dimensional measure. Then,
\begin{equation}
\left(  |D_N| \ast \mu \right) (x) \lesssim c_\alpha(\mu)\, N^{d-\alpha}\, \left( \ln N \right)^d, \qquad \forall x \in \mathbb{T}^d.
\end{equation}
\end{lemma}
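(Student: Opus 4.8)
The plan is to combine the pointwise bound $|d_N(\theta)|\lesssim \min(N,1/|\theta|)$ from Lemma~\ref{LemmaDirichlet1D} with a dyadic decomposition of $\mathbb T^d$ centred at $x$ and the defining inequality \eqref{C_Alpha} of an $\alpha$-dimensional measure. The only delicate point is that $\mu$ need not be a product measure, so one cannot argue one coordinate at a time; instead we split $\mathbb T^d$ into \emph{anisotropic} boxes and bound the $\mu$-mass of each such box by covering it by cubes whose side length equals its \emph{shortest} edge.

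First I would note that, by \eqref{DirichletKernel} and Lemma~\ref{LemmaDirichlet1D}, $|D_N(x-y)|\lesssim \prod_{\ell=1}^d \min\bigl(N,1/|x_\ell-y_\ell|\bigr)$, where $|x_\ell-y_\ell|$ denotes distance on $\mathbb T$. Put $J=\lceil\log_2 N\rceil$, so that $2^J\simeq N$. For each coordinate $\ell$ and each $j_\ell\in\{0,1,\dots,J\}$ let $A^\ell_{j_\ell}$ be the dyadic annulus $\{\,y_\ell: 2^{-j_\ell-1}<|x_\ell-y_\ell|\le 2^{-j_\ell}\,\}$ when $1\le j_\ell<J$, the innermost cap $\{\,y_\ell:|x_\ell-y_\ell|\le 2^{-J}\,\}$ when $j_\ell=J$, and the outer piece $\{\,y_\ell:1/2<|x_\ell-y_\ell|\le\pi\,\}$ when $j_\ell=0$ (the $O(1)$ scales between $1$ and $\pi$ being absorbed into the latter at no cost). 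Then $|d_N(x_\ell-y_\ell)|\lesssim 2^{j_\ell}$ on $A^\ell_{j_\ell}$, so $|D_N(x-y)|\lesssim 2^{j_1+\dots+j_d}$ on the box $R_{\vec j}=\prod_{\ell=1}^d A^\ell_{j_\ell}$. Since the boxes $R_{\vec j}$, $\vec j\in\{0,\dots,J\}^d$, partition $\mathbb T^d$, this gives
\begin{equation}
(|D_N|\ast\mu)(x)\;=\;\sum_{\vec j}\int_{R_{\vec j}}|D_N(x-y)|\,d\mu(y)\;\lesssim\;\sum_{\vec j\in\{0,\dots,J\}^d} 2^{j_1+\dots+j_d}\,\mu\bigl(R_{\vec j}\bigr).
\end{equation}

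It remains to estimate $\mu(R_{\vec j})$. The box $R_{\vec j}$ lies in the box centred at $x$ with side lengths $\simeq 2^{-j_\ell}$; writing $M=\max_\ell j_\ell$, it is covered by $\lesssim\prod_{\ell=1}^d 2^{M-j_\ell}=2^{dM-(j_1+\dots+j_d)}$ cubes of side $\simeq 2^{-M}$, each contained in a ball of radius $\lesssim 2^{-M}$, whence by \eqref{C_Alpha} each has $\mu$-measure $\lesssim c_\alpha(\mu)\,2^{-\alpha M}$. Therefore $\mu(R_{\vec j})\lesssim c_\alpha(\mu)\,2^{(d-\alpha)M-(j_1+\dots+j_d)}$, the factor $2^{j_1+\dots+j_d}$ cancels, and using $d-\alpha\ge 0$ together with $M\le J$,
\begin{equation}
(|D_N|\ast\mu)(x)\;\lesssim\;c_\alpha(\mu)\sum_{\vec j\in\{0,\dots,J\}^d}2^{(d-\alpha)\max_\ell j_\ell}\;\le\;c_\alpha(\mu)\,2^{(d-\alpha)J}(J+1)^d\;\lesssim\;c_\alpha(\mu)\,N^{d-\alpha}(\ln N)^d,
\end{equation}
which is the claim (understood for $N\ge 2$). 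The one genuine obstacle is the covering step: one must cover $R_{\vec j}$ by cubes of side equal to its \emph{shortest} edge --- not simply bound it by a single ball of radius its longest edge, which would already fail for $\alpha>1$ --- and then observe that the geometric overhead $2^{dM-(j_1+\dots+j_d)}$ is exactly undone by the size $2^{j_1+\dots+j_d}$ of $D_N$ on that box, leaving only the harmless $(\ln N)^d$ coming from the $O((\log N)^d)$ boxes.
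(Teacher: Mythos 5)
Your proof is correct, but it follows a genuinely different decomposition than the paper's. The paper works with the translated measure $\mu_x$ and a \emph{uniform} grid of cubes $Q_m$ of side $\simeq 1/N$: the $\alpha$-dimensionality of $\mu$ is invoked at the single scale $1/N$ (each $Q_m$ has mass $\lesssim c_\alpha(\mu)N^{-\alpha}$), the kernel is bounded on $Q_m$ by $\prod_\ell N/(m_\ell+\delta)$, and the $(\ln N)^d$ then comes from the product of one-dimensional harmonic sums $\bigl(N\sum_{n\le N}1/n\bigr)^d$. You instead use a multi-scale dyadic decomposition into anisotropic boxes $R_{\vec j}$ on which $|D_N|\lesssim 2^{j_1+\cdots+j_d}$, and you invoke the measure bound at the scale of the \emph{shortest} edge $2^{-\max_\ell j_\ell}$ of each box via a covering argument; the geometric overhead $2^{dM-\sum_\ell j_\ell}$ exactly cancels the kernel size, and the $(\ln N)^d$ comes instead from counting the $\lesssim(\log N)^d$ boxes. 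Your key covering step is handled correctly (covering by a single ball of the longest edge would indeed fail for $\alpha>1$), the edge cases $j_\ell=0$ and $j_\ell=J$ and the case $\alpha=d$ all check out, and the fact that each annulus consists of two arcs only costs a dimensional constant. The paper's argument is slightly more economical in that it needs the measure hypothesis at only one scale; yours has the mild advantage of making transparent, box by box, how the kernel's local size is traded against the $\alpha$-dimensional mass bound, which is the standard dyadic template for kernels of the form $\prod_\ell\min(N,1/|x_\ell-y_\ell|)$ and would adapt with no change to the maximal variant $\sup_{M\le N}|D_M|$ used in Appendix~\ref{Appendix}.
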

\begin{proof}
 Change variables to write 
\begin{equation}\label{MainIntegralDirichl}
\left( \left| D_N  \right| *   \mu \right)(x)  =  \int_{\mathbb T^d} |D_N (x-y)| d \mu(y) = \int_{\mathbb T^d} |D_N (y)| d \mu_x(y),
\end{equation}
where $\mu_x(A) = \mu(x-A)$ for every $A \subset \mathbb{T}^d$. In particular, $\mu_x$ is an $\alpha$-dimensional measure, with the same constant as $\mu$. Now, split the torus into the $2^d$ cubes of length $\pi$ that are obtained by halving each component $\mathbb T$ of the $d$-dimensional torus. For the symmetric property $d_N(2\pi - x_\ell) = d_N(x_\ell)$, the situation for all these cubes is similar to that in $[0,\pi]^d$, so we work in the latter. Divide it into smaller cubes of length $2\pi/N$ indexed as 
\begin{equation}
Q_m = \prod_{\ell=1}^d  \left[2\pi\,\frac{m_\ell}{N},2\pi\, \frac{m_\ell+1}{N}\right], \qquad m = (m_1, \ldots, m_d) \in \{ 0, 1, \ldots, N/2-1 \}^d
\end{equation}
 so that   
\begin{equation}\label{SplittingTheLittleTorus}
\int_{[0,\pi]^d} |D_N (y)| d \mu_x(y) = \sum_{m} \int_{Q_m} |D_N (y)| d \mu_x(y). 
\end{equation}
We may conveniently rewrite the bound in Lemma~\ref{LemmaDirichlet1D} as
\begin{equation}
\left| d_N(y_\ell) \right| \lesssim \frac{N}{ m_\ell  + \delta(y_\ell)}, \quad \text{ where } \delta(y_\ell) = \left\{  \begin{array}{ll}
1, & \text{ if } y_\ell \in \left[ \frac{m_\ell}{N},\frac{m_\ell +1}{N} \right), m_\ell = 0, \\
0, & \text{ if } y_\ell \in \left[ \frac{m_\ell}{N},\frac{m_\ell +1}{N} \right), m_\ell \neq 0,
\end{array}
\right.
\end{equation}
and plug it in \eqref{SplittingTheLittleTorus} so that 
\begin{equation}
\begin{split}
\int_{[0,\pi]^d} |D_N (y)| d \mu_x(y) &  \lesssim \sum_m \left( \prod_{l=1}^d\frac{N}{ m_\ell  + \delta(y_\ell)} \right)\, \mu_x(Q_m) \\
%\leq  \frac{c_\alpha(\mu) }{N^\alpha}\, \sum_m \frac{N}{ m_\ell  + \delta(y_\ell)} \\
& \lesssim \frac{c_\alpha(\mu) }{N^\alpha} \, \left( \sum_{n=0}^{N/2}\frac{N}{n+\delta(n/N)} \right)^d = \frac{c_\alpha(\mu) }{N^\alpha} \, \left( N + N\sum_{n=1}^{N/2}\frac{1}{n}   \right)^d \\
& \simeq c_\alpha(\mu)\, N^{d-\alpha}\, \left( \ln N \right)^d.
\end{split}
\end{equation}
Since, as noted above, the same bound works for each of the $2^d$ cubes of size $\pi$ in which we split $\mathbb T^d$, we get 
\begin{equation}
\left( \left| D_N  \right| *   \mu \right)(x) \lesssim c_\alpha(\mu)\, N^{d-\alpha}\, \left( \ln N \right)^d, \qquad \forall x \in \mathbb T^d.
\end{equation}
\end{proof}

Let us proceed now to the proof of Proposition~\ref{PropTransference}.
\begin{proof}[Proof of Proposition~\ref{PropTransference}]

By splitting $f$ into its dyadic decomposition $f = \sum_{k =0}^\infty f_k$ where the Fourier coefficients $\widehat{f_k}(n)$ are supported 
in the squared annulus where~$|n_j| \leq 2^k$ for all $j=1, \ldots, d$ and $2^{k-1} \leq |n_j| \leq 2^k$ for at least one $j$,  
% summing a geometric series,  this would follow from
it is enough to prove 
\begin{equation}\label{AlphaMaxIneqPrelim}
\left\| \sup_{0 < t <1} | e^{it\Delta} f | \right\|_{L^{p}(\mathbb T^d, d \mu)} \lesssim
c_{\alpha}(\mu)^{\frac{1}{p}} N^{s} \| f \|_{L^{2}(\T^d)},
\end{equation}
whenever $N \in 2^\mathbb{N}$,  
%$\supp \widehat{f} \subset [-N, N]^d$
 $ \supp \widehat{f} \subset [-N,N]^d$ %OLD \{ n \in \mathbb{Z}^d : N/2 \leq |n_j| \leq N, \, j=1, \ldots,d \} 
  and $s > \frac{d-\alpha}{p} + s_{0} $.  For such functions, it is easy to check that the self reproducing formula 
\begin{equation}
e^{it\Delta}f =  D_N * e^{it\Delta}f
\end{equation}  
holds, where $D_N$ is the Dirichlet kernel defined in \eqref{DirichletKernel}.
Letting $p'$ be the H\"older conjugate of $p$ that satisfies $1 = \frac{1}{p} + \frac{1}{p'}$ and using H\"older's inequality, we may write
\begin{align}\label{BeforeFubini1}
 &  \quad \quad \sup_{0 < t <1} |e^{it\Delta} f(x) |  
 \leq    \int_{\T^d} |D_N(x - y)| \sup_{0 < t < 1} |e^{it\Delta}f(y)| \ dy \\ \nonumber
&\qquad \qquad  \leq \left(  \int_{\mathbb T^d}\left| D_N(x-y) \right|\, dy \right)^{\frac{1}{p'}} \, 
\left( \int_{\T^d}  | D_N (x-y) | \sup_{0 < t < 1} 
|e^{it\Delta}f(y)|^{p} \ dy  \right)^{\frac{1}{p}}.
% \\ \nonumber
%&\qquad \qquad \lesssim ( \ln N )^{d/p'} \left[\left( 
%\left| D_N  \right|
%*  \sup_{0 < t < 1} 
%|e^{it\Delta}f|^{p}\right)(x) \right]^{1/p}.
\end{align}
For the first integral, from Lemma~\ref{LemmaDirichlet1D} we get
\begin{equation}\label{IntegralOfDirichletKernel}
\int_{\T^d} | D_N(x-y)|\, dy  = \int_{\T^d} | D_N(y)|\, dy =  \left( \int_{\T} | d_N(y)|\, dy \right)^d  \lesssim (1 + \ln N)^d \simeq (\ln N)^d.
\end{equation}
%One way to prove this is to write first
%\begin{equation}
%d_N(x_\ell)  =   \sum_{|k| \leq N} e^{i k x_\ell } = 
% \frac{\sin \left( \left( N + 1/2 \right) x _\ell\right)}{\sin \left( x_\ell/2 \right)} 
%\end{equation}
%so that for every $x_\ell \in \mathbb T$ we have
%\begin{equation}\label{DirichletKErnelBound}
%|d_N(x_\ell)| \lesssim  \left\{ 
%\begin{array}{lll}
%N & \mbox{when} & |x_\ell| \leq 1/N
%\\
%1/ |x_\ell |  & \mbox{when} & 1/N < |x_\ell| \leq \pi, 
%\end{array}
%\right. 
%\end{equation}
%and thus, $\int_{\T^d} | D_N(y)|\, dy = \left( \int_{\T} | d_N(y)|\, dy \right)^d \lesssim (1 + \ln N)^d \lesssim (\ln N)^d $.
%
%
%
%OLD
%In fact summing the geometric series in \eqref{DirichletKernel} one has immediately the pointwise bound
%\begin{equation}\label{PointwiseBoundDirichletKernel}
%| D_N (x)| \lesssim \prod_{\ell=1, \ldots, d} \frac{1}{|x_1| \dots |x_d|}.
%\end{equation}
%
Thus, take in \eqref{BeforeFubini1} the $L^p$ norm with respect to an $\alpha$-dimensional measure $\mu$ so that the fact that the Dirichlet kernel is an even function and Fubini's theorem imply
\begin{equation}\label{IntermediateLpBound}
\begin{split}
\left\| \sup_{0 < t < 1} | e^{it\Delta}f | \right\|^{p}_{L^{p}(\mathbb T^d, d \mu)} & \lesssim (\ln N)^{\frac{dp}{p'}}  \int_{\T^d} \int_{\T^d} \left| D_N (x - y) \right|  \sup_{0 < t < 1} |e^{it\Delta}f(y)|^{p} \ dy\,  d\mu(x) \\
& =  (\ln N)^{\frac{dp}{p'}} \int_{\T^d} \sup_{0 < t < 1} |e^{it\Delta}f(y)|^{p} \left( 
\left| D_N  \right| * \mu  \right)(y) \ dy. 
\end{split}
\end{equation}
Using now Lemma~\ref{LemmaConvolutionWithMu}, we obtain
%This, together with \eqref{IntermediateLpBound}, yields
 \begin{eqnarray}
 \left\| \sup_{0 < t < 1} | e^{it\Delta}f | \right\|^{p}_{L^{p}(\mathbb T^d, d \mu)} & \lesssim & c_{\alpha}(\mu) (\ln N)^{dp} N^{d- \alpha}  
 \left\| \sup_{0 < t <1} | e^{it\Delta}f | \right\|^{p}_{L^{p}(\T^d)}. \nonumber
 \end{eqnarray}
By hypothesis, for every $s > s_0$, the $L^p$ of the maximal function on the right is bounded by $\lVert f \rVert_{H^s(\mathbb T^d)}$, so  we get
\begin{equation}
\begin{split}
\left\| \sup_{0 < t < 1} | e^{it\Delta}f | \right\|_{L^{p}(\mathbb T^d, d \mu)} &  \lesssim c_{\alpha}(\mu)^{1/p} (\ln N)^{d} N^{\frac{d- \alpha}{p}}\,\lVert f \rVert_{H^s(\mathbb T^d)}   \\
&  \simeq c_{\alpha}(\mu)^{1/p} (\ln N)^{d} N^{\frac{d- \alpha}{p} + s  }\,\lVert f\rVert_{L^2(\mathbb T^d)} \\
&  \lesssim c_{\alpha}(\mu)^{1/p}  N^{\frac{d- \alpha}{p} + s + \varepsilon d  }\,\lVert f \rVert_{L^2(\mathbb T^d)} \\
&  = c_{\alpha}(\mu)^{1/p}  N^{\tilde{s}  }\,\lVert f \rVert_{L^2(\mathbb T^d)}
\end{split}
\end{equation}
for any $\tilde{s} = \frac{d- \alpha}{p} + s + \varepsilon d > \frac{d- \alpha}{p} + s_0 + \varepsilon d $ and any $\varepsilon >0$. Thus, taking $\varepsilon$ as small as needed, the above inequality is satisfied for every $\tilde{s} > \frac{d- \alpha}{p} + s_0$, which is \eqref{AlphaMaxIneqPrelim}, what we wanted to prove. 
\end{proof}

\appendix

\section{}\label{Appendix}

As we claimed in the beginning of Section~\ref{SECTION_Setup}, for $s \in (0, d/2]$, $\alpha > d - 2s$ and  $f \in H^{s}(\T^d)$, we show here that the solution $e^{it\Delta}f(x) = \lim_{N \to \infty}S_N(t)f(x)$ exists $\alpha$-almost everywhere.

By a standard density argument combined with Frostman's lemma (for the details we refer again to \cite[Appendix B]{BarceloBennettCarberyRogers2011}) it is enough to prove the following maximal inequality.
\begin{proposition}\label{Carls>0L2}
Let $s \in (0, d/2]$ and $\alpha > d-2s$. Then    
\begin{equation}\label{MaxIneqDefSol}
\left\| \sup_{N> 1} | S_{N}(t)f | \right\|_{L^2(\mathbb T^d, d \mu)} \lesssim
\sqrt{c_{\alpha}(\mu)}  \| f \|_{H^{s}(\T^d)}
\end{equation}
for every $\alpha$-dimensional measure $\mu$.
\end{proposition}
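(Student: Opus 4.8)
The plan is to reduce the estimate to a single Littlewood--Paley scale and there exploit the crude non-oscillatory majorant of the Dirichlet kernel from Lemma~\ref{LemmaDirichlet1D}, together with the convolution bound of Lemma~\ref{LemmaConvolutionWithMu}. Write $f = \sum_{k \geq 0} f_k$ for the dyadic decomposition in squares used in Section~\ref{SectionThm2}, so that $\widehat{f_k}$ is supported where $|n_j| \leq 2^k$ for all $j$ and $2^{k-1} \leq |n_j|$ for at least one $j$, and in particular $\|f_k\|_{H^s} \simeq 2^{ks}\|f_k\|_{L^2}$. For each fixed $M$ only the finitely many pieces with $2^{k-1} \leq M$ contribute to $S_M(t)f$, so $S_M(t)f = \sum_k S_M(t)f_k$ is, at each $x$, a finite sum; hence $\sup_{M>1} |S_M(t)f| \leq \sum_{k\geq 0} \sup_{M>1} |S_M(t)f_k|$ pointwise, and by Minkowski's inequality in $L^2(\mathbb{T}^d, d\mu)$ it suffices to prove the per-scale bound
\begin{equation}\label{PerScale}
\left\| \sup_{M>1} |S_M(t)f_k| \right\|_{L^2(\mathbb{T}^d, d\mu)} \lesssim \sqrt{c_\alpha(\mu)}\, 2^{k(d-\alpha)/2}\, (1+k)^{d}\, \|f_k\|_{L^2(\mathbb{T}^d)},
\end{equation}
with implied constant independent of $t$, and then sum in $k$.

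To prove \eqref{PerScale}, set $g = e^{it\Delta}f_k$, a trigonometric polynomial with $\|g\|_{L^2} = \|f_k\|_{L^2}$ and Fourier support in $[-2^k,2^k]^d$. Then $S_M(t)f_k = D_M * g$, which equals $g$ whenever $M \geq 2^k$; thus $\sup_{M>1} |S_M(t)f_k| \leq \sup_{M \leq 2^k} |D_M * g|$. By Lemma~\ref{LemmaDirichlet1D}, $|D_M(z)| \lesssim \prod_{\ell=1}^d \min(M, |z_\ell|^{-1}) \leq \widetilde{D}_{2^k}(z)$ for $M \leq 2^k$, where $\widetilde{D}_{N}(z) := \prod_{\ell=1}^d \min(N, |z_\ell|^{-1})$; note $\widetilde D_N$ obeys exactly the two bounds used in the excerpt for $|D_N|$, namely $\|\widetilde D_N\|_{L^1(\mathbb T^d)} \lesssim (\ln N)^d$ (as in \eqref{IntegralOfDirichletKernel}) and $(\widetilde D_N * \mu)(x) \lesssim c_\alpha(\mu)\, N^{d-\alpha}(\ln N)^d$ (the proof of Lemma~\ref{LemmaConvolutionWithMu} only uses this majorant). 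Pulling the supremum inside the convolution gives $\sup_{M \leq 2^k} |D_M * g(x)| \leq (\widetilde D_{2^k} * |g|)(x)$, and by Cauchy--Schwarz in the $y$-integral followed by Fubini (using that $\widetilde D_{2^k}$ is even),
\begin{equation}
\left\| \widetilde D_{2^k} * |g| \right\|_{L^2(d\mu)}^2 \leq \|\widetilde D_{2^k}\|_{L^1} \int_{\mathbb T^d} |g(y)|^2\, \bigl(\widetilde D_{2^k} * \mu\bigr)(y)\, dy \lesssim c_\alpha(\mu)\, 2^{k(d-\alpha)}\, (1+k)^{2d}\, \|g\|_{L^2}^2,
\end{equation}
which is \eqref{PerScale}.

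Finally, since $\|f_k\|_{L^2} \lesssim 2^{-ks}\|f_k\|_{H^s}$, summing \eqref{PerScale} in $k$ and applying Cauchy--Schwarz yields
\begin{equation}
\left\| \sup_{M>1} |S_M(t)f| \right\|_{L^2(d\mu)} \lesssim \sqrt{c_\alpha(\mu)} \sum_{k\geq 0} 2^{-k\left(s - (d-\alpha)/2\right)} (1+k)^{d}\, \|f_k\|_{H^s} \lesssim \sqrt{c_\alpha(\mu)}\, \|f\|_{H^s},
\end{equation}
the series converging because $\alpha > d - 2s$ forces $s - (d-\alpha)/2 > 0$, so the geometric decay dominates the polynomial factor and $\bigl(\sum_k 2^{-2k(s-(d-\alpha)/2)}(1+k)^{2d}\bigr)^{1/2}<\infty$, while $\bigl(\sum_k\|f_k\|_{H^s}^2\bigr)^{1/2} \simeq \|f\|_{H^s}$. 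The crux of the argument is the reduction to a single scale: once $M$ is capped at the frequency cutoff $2^k$, one may afford to replace $\sup_M |D_M|$ by the entirely non-oscillatory envelope $\widetilde D_{2^k}$, since the resulting loss $2^{k(d-\alpha)/2}$ is still summable against the gain $2^{-ks}$ precisely under the strict (non-endpoint) hypothesis $\alpha > d - 2s$; everything else is a direct application of Lemmas~\ref{LemmaDirichlet1D} and~\ref{LemmaConvolutionWithMu}.
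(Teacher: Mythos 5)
Your proof is correct and follows essentially the same route as the paper: reduce by the dyadic (Littlewood--Paley) decomposition to a band-limited estimate, dominate $\sup_{M\leq N}|D_M|$ by the non-oscillatory envelope $\min(N,|z_\ell|^{-1})$ in each variable, and combine the $L^1$ bound $(\ln N)^d$ with the convolution bound $\lesssim c_\alpha(\mu)N^{d-\alpha}(\ln N)^d$ against $\mu$ via Cauchy--Schwarz and Fubini, exactly as in Lemma~\ref{LastLemma} and the maximal analogue \eqref{VeryLast} of Lemma~\ref{LemmaConvolutionWithMu}. The only cosmetic differences are that you carry the logarithmic factors $(1+k)^d$ explicitly and spell out the summation over scales, where the paper absorbs the logs into $N^\varepsilon$ and cites the standard Littlewood--Paley reduction.
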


%We recall that in the definition \ref{DefADim} of $\alpha$-dimensional we are assuming the measure to be positive and supported in $B(0,1)$.
We prove first an auxiliary lemma showing the analogue to \eqref{IntegralOfDirichletKernel}.
\begin{lemma}\label{LastLemma} 
Let $N \in \mathbb N$. Then,  
\begin{equation}\label{DirAgain}
\int_{\T^d} \sup_{M \leq N} | D_M(x) |\, dx  \lesssim (\ln N)^d.
\end{equation}
\end{lemma}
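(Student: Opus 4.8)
The plan is to exploit the product structure $D_M(x)=\prod_{\ell=1}^{d} d_M(x_\ell)$ to reduce the claim to a one-dimensional estimate, and then invoke the pointwise bound for $d_N$ from Lemma~\ref{LemmaDirichlet1D}. First, for every $x\in\T^d$ one has
\begin{equation}
\sup_{M\le N}|D_M(x)|=\sup_{M\le N}\prod_{\ell=1}^{d}|d_M(x_\ell)|\le \prod_{\ell=1}^{d}\Big(\sup_{M\le N}|d_M(x_\ell)|\Big),
\end{equation}
since the supremum of a product of nonnegative numbers is bounded by the product of the suprema; note that $\sup_{M\le N}|d_M|$ is a finite maximum of continuous functions, hence measurable. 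Integrating over $\T^d$ and applying Tonelli's theorem, it is enough to prove the one-dimensional bound $\int_{\T}\sup_{M\le N}|d_M(y)|\,dy\lesssim \ln N$, and then raise it to the $d$-th power.

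For the one-dimensional estimate, I would use that by Lemma~\ref{LemmaDirichlet1D} the kernel $d_M$ is symmetric about $\pi$ and satisfies $|d_M(y)|\lesssim\min(M,1/y)$ for $0\le y\le\pi$. Since $y\mapsto\min(M,1/y)$ is nondecreasing in $M$, taking the supremum over $M\le N$ gives $\sup_{M\le N}|d_M(y)|\lesssim\min(N,1/y)$ on $[0,\pi]$, and by symmetry on all of $\T$. Then
\begin{equation}
\int_{0}^{\pi}\min\!\Big(N,\frac1y\Big)\,dy=\int_{0}^{1/N}N\,dy+\int_{1/N}^{\pi}\frac{dy}{y}=1+\ln(\pi N)\lesssim \ln N
\end{equation}
for $N\ge 2$ (the case of bounded $N$ being trivial), and doubling for the symmetric half yields $\int_{\T}\sup_{M\le N}|d_M(y)|\,dy\lesssim\ln N$, hence $\int_{\T^d}\sup_{M\le N}|D_M|\,dx\lesssim(\ln N)^d$.

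This argument is essentially routine, mirroring the computation already carried out for \eqref{IntegralOfDirichletKernel}; the only point that needs a moment's attention is that the interchange of the supremum over $M$ with the product over $\ell$ goes in the favorable direction ($\sup\prod\le\prod\sup$), which is precisely what is required here. I do not expect any genuine obstacle.
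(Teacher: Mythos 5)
Your argument is correct and coincides with the paper's proof: the same reduction $\sup_{M\le N}|D_M|\le\prod_\ell\sup_{M\le N}|d_M(x_\ell)|$, the symmetry about $\pi$, the bound $\sup_{M\le N}|d_M(y)|\lesssim\min(N,1/y)$ from Lemma~\ref{LemmaDirichlet1D}, and the split of the integral at $1/N$ yielding $\ln N$ in one dimension. Nothing is missing.
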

\begin{proof}
The definition of $D_N$ in \eqref{DirichletKernel}  directly gives
\begin{equation}\label{SupDirDec}
\sup_{M \leq N} | D_M (x)| \leq  \prod_{\ell=1}^d \sup_{M \leq N} |d_{M}(x_\ell)|, 
\end{equation}
and by the symmetric property in Lemma~\ref{LemmaDirichlet1D}, 
\begin{equation}
\int_{\T^d} \sup_{M \leq N} | D_M(x) |\, dx  \leq  \left(  \int_{\mathbb T}  \sup_{M \leq N} |d_{M}(x)|\, dx \right)^d = 2^d\,  \left(  \int_0^{\pi}  \sup_{M \leq N} |d_{M}(x)|\, dx \right)^d.
\end{equation}
Now, using $|d_M(x)| \lesssim \min (M, 1/x)$ from Lemma~\ref{LemmaDirichlet1D}, and since $\min (M, 1/x) \leq \min(N,1/x)$ for every $M \leq N$, we deduce that
\begin{equation}\label{SupDirBound}
\sup_{ M \leq N} |d_M(x)|  \lesssim \min(N,1/x).
\end{equation}
Hence, 
\begin{equation}
\int_0^{\pi} \sup_{M \leq N} |d_{M}(x)|\, dx \lesssim \int_0^{1/N} N\, dx + \int_{1/N}^{\pi} \frac{dx}{x} \simeq \ln N
\end{equation}
which concludes the proof. 

%the bound \eqref{DirAgain} follows by Fubini's theorem once we have proved 
%\begin{equation}\label{SupDirBound}
% \sup_{M \leq N} |d_{M}(x_\ell)| \lesssim \min \left( N, \frac{1}{|x_\ell|} \right).
%\end{equation}
%Letting $\eta_{M}(x_\ell) := \min \left( M, \frac{1}{|x_\ell|} \right)$ and using the representation formula
% $$
% d_M(x_\ell)  :=   \sum_{|k_\ell | \leq M} e^{i k_\ell x_\ell  } = 
% \frac{\sin \left( \left( M + 1/2 \right) x_\ell  \right)}{\sin \left( x_\ell / 2 \right)} 
% $$
% one easily obtains 
% $$
%| d_M(x_\ell)| \lesssim \eta_{M}(x_\ell).
% $$
% Thus \eqref{SupDirBound} immediately follows by the fact that $\eta_{M}(x_\ell) \leq \eta_{N}(x_\ell)$ when $M \leq N$.
 
\end{proof}

We proceed to the proof of Proposition~\ref{Carls>0L2}.
\begin{proof}[Proof of Proposition~\ref{Carls>0L2}]
By the usual Littlewood-Paley decomposition in squared annuli, it is enough to prove
%By dividing in dyadic squared annuli and summing a geometric series, the estimate \eqref{MaxIneqDefSol} would follow from
\begin{equation}\label{CarlEasy}
\left\| \sup_{M \leq N} | S_M(t) f | \right\|_{L^{2}(\mathbb T^d, d \mu)} \leq C_{\varepsilon}
\sqrt{c_{\alpha}(\mu)} \, N^{\frac{d-\alpha}{2} + \varepsilon}\,  \| f \|_{L^{2}(\T^d)}, 
\end{equation}
for every $\varepsilon >0$, $N \in 2^\mathbb{N}$ and  
%$M \in 2^\mathbb{N}$ and  
$\supp \widehat{f} \subset [-N,N]^d$. %OLD$\{ n \in \mathbb Z^d \, : \, N/2 \leq |n_j| \leq N, \, j=1,\ldots,d  \}$.
%$\supp \widehat{f} \subset [-N, N]^d$; 
%until the end of the proof we will thus restrict to such $f$. 
Observe that the 
restriction $M \leq N$ in the maximal operator in \eqref{CarlEasy} is due to the fact that $S_M(t)f(x) = S_N(t)f(x)$ for every $M \geq N$.
% is that since $\hat{f}(k) =0$ for $k$ outside
%$[-N, N]^d$ we clearly have
%$$
%\sup_{M >1} |S_{M}(t) f| = \sup_{M \leq N} |S_{M}(t) f|.
%$$
%%
It is easy to check that the self-reproducing formula 
\begin{equation}
S_M(t)f =  D_M * e^{it \Delta} f
\end{equation}
holds, where $D_N$ is the Dirichlet kernel defined in \eqref{DirichletKernel}. By H\"older's inequality, 
\begin{equation}
\begin{split}
& \sup_{M \leq N} |S_M(t) f(x) |    \leq    \int_{\T^d}   \sup_{M \leq N} |D_M(x - y)|  |e^{it\Delta}f(y)| \ dy \\
& \qquad \qquad \leq  \left( \int_{\mathbb T^d}  \sup_{M \leq N} \left|D_M(x-y) \right| \,  dy \right)^\frac12   \left( \int_{\T^d}  \sup_{M \leq N} | D_M (x-y) |  |e^{it\Delta}f(y)|^{2} \ dy  \right)^{\frac{1}{2}}
% \\ 
%& \qquad \qquad \lesssim ( \ln N )^{d/2} \left( \sup_{M \leq N} \left|  D_M  \right| * |e^{it\Delta}f|%^{2}\right)^{1/2}
\end{split}
\end{equation}
The first term can be bounded using Lemma~\ref{LastLemma}.
Thus, as we did in \eqref{BeforeFubini1}, the fact that the Dirichlet kernel is an even function and Fubini's theorem imply
\begin{equation}\label{BoundOfNorm}
\begin{split}
\left\| \sup_{M \leq N} |S_M(t) f |  \right\|^{2}_{L^{2}(d \mu)} & \lesssim (\ln N)^d \int_{\T^d} \int_{\T^d} \sup_{M \leq N} \left|   D_N (x - y) \right|   |e^{it\Delta}f(y)|^{2} \ dy \,  d\mu(x) \\ 
& =  (\ln N)^d \int_{\T^d}  |e^{it\Delta}f(y)|^2 \left( \sup_{M \leq N} \left| D_N  \right| *   \mu  \right)(y) \ dy. 
\end{split}
\end{equation}
In the fashion of Lemma~\ref{LemmaConvolutionWithMu}, one can prove 
\begin{equation}\label{VeryLast}
\left( \sup_{M \leq N} \left|    D_N  \right| *   \mu \right)(y)   \lesssim  c_{\alpha}(\mu) \, N^{d-\alpha}\,  (\ln N)^d, \qquad \forall y \in \mathbb{T}^d,
\end{equation}
where we need to use \eqref{SupDirDec} and replace \eqref{DirichletKErnelBound} by its maximal analogue \eqref{SupDirBound}. Hence, from \eqref{BoundOfNorm} and using Plancherel's theorem we get
\begin{equation}
\begin{split}
\left\| \sup_{M \leq N} |S_M(t) f |  \right\|_{L^{2}(d \mu)} & \lesssim \sqrt{c_{\alpha}(\mu)} \, (\ln N)^{d} \, N^{\frac{d- \alpha}{2}}  \left\|  e^{it\Delta}f  \right\|_{L^{2}(\T^d)} \\
 & = \sqrt{ c_{\alpha}(\mu)} \, (\ln N)^{d} \, N^{\frac{d- \alpha}{2}} \| f \|_{L^2(\T^d)}.
\end{split}
\end{equation}
Since for any given $\varepsilon>0$ we have $\ln N < N^\varepsilon$ if $N$ is large enough, we conclude
\begin{equation}
\left\| \sup_{M \leq N} |S_M(t) f |  \right\|_{L^{2}(\mathbb T^d, d \mu)} \lesssim \sqrt{ c_{\alpha}(\mu)} \,  N^{\frac{d- \alpha}{2} + \varepsilon d} \, \| f \|_{L^2(\T^d)}
\end{equation}
which is what we wanted to prove in \eqref{CarlEasy}.
% It remains to prove \eqref{VeryLast}. This follows proceeding exactly as in 
% the proof of Lemma \ref{LemmaFractalDirKernel},
% once we replace \eqref{DirKernelITP}-\eqref{DirichletKErnelBound} with their maximal analogous
% \eqref{SupDirDec}-\eqref{SupDirBound}, proved in Lemma \ref{LastLemma}. 
\end{proof}

\subsection*{Acknowledgements}
The authors would like to thank Luis Vega for pointing out the counterexample from \cite{MoyuaVega2008}, which is fundamental 
to proving Theorem \ref{MainThm}. R. Luc\`a would also like to thank Ciprian Demeter and Anne Lonjou for useful conversations and Keith Rogers 
for having introduced him to the problem and for having shared many ideas about Proposition~\ref{PropTransference}.

The authors are supported by the Basque Government under program BCAM-BERC 2018-2021 and by the Spanish Ministry of Science, Innovation and Universities under the BCAM Severo Ochoa accreditation SEV-2017-0718. 

D. Eceizabarrena is supported as well by the Spanish Ministry of Education, Culture and Sports under grant FPU15/03078 - Formaci\'on de Profesorado Universitario, the ERCEA under the Advanced Grant 2014 669689 - HADE and the Simons Foundation Collaboration Grant on Wave
Turbulence (Nahmod's Award ID 651469). Part of this work was done while he worked at BCAM - Basque Center for Applied Mathematics.

R. Luc\`a is also supported by IHAIP project PGC2018-094528-B-I00 (AEI/FEDER, UE).

\bibliographystyle{acm}
\bibliography{DaniRenato}

\end{document}